\documentclass[11pt,twoside]{article}

\addtolength{\textwidth}{0.5in}
\usepackage{epsfig,amsfonts,color}
\usepackage{amsmath}

\bibliographystyle{plain}
\usepackage{amssymb, palatino, geometry,url}
\usepackage{algorithmic}
\usepackage[noresetcount,lined,boxed]{algorithm2e} 

\usepackage[colorlinks=true,linkcolor=blue,citecolor=blue,urlcolor=blue]{hyperref}

\geometry{letterpaper,
          left       = 0.9in,
          right      = 0.9in,
          top        = 0.9in,
          bottom     = 0.9in}
\linespread{1.2}

\usepackage{fancyhdr}
\pagestyle{fancy}

\lhead{}
\rhead{\url{http://zavalab.engr.wisc.edu}}



\newcommand{\be}{\begin{equation}}
\newcommand{\ee}{\end{equation}}
\newcommand{\bea}{\begin{eqnarray}}
\newcommand{\eea}{\end{eqnarray}}

\newcommand{\bvec}{\left(\begin{array}{c}}
\newcommand{\evec}{\end{array}\right)}
\newcommand{\bsub}{\begin{subequations}}
\newcommand{\esub}{\end{subequations}}

\usepackage{lineno}

\usepackage{tabularx} 
\usepackage{svg} 
\usepackage{makecell} 
\usepackage{amsthm} 
\usepackage{verbatim} 

\providecommand{\e}[1]{\ensuremath{\times 10^{#1}}} 

\newtheorem{theorem}{Theorem} 

\theoremstyle{definition}
\newtheorem{definition}{Definition}
\theoremstyle{remark}

\theoremstyle{corollary}
\newtheorem*{corollary}{Corollary}
\theoremstyle{lemma}

\begin{document}

\title{Economic Properties of Multi-Product Supply Chains}

\author{Philip A. Tominac and Victor M. Zavala\thanks{Corresponding Author: victor.zavala@wisc.edu}\\
 {\small Department of Chemical and Biological Engineering}\\
 {\small University of Wisconsin-Madison, 1415 Engineering Dr, Madison, WI 53706, USA}}
 \date{}
\maketitle

\begin{abstract}
We interpret multi-product supply chains (SCs) as coordinated markets; under this interpretation, a SC optimization problem is a market clearing problem that allocates resources and associated economic values (prices) to different stakeholders that bid into the market (suppliers, consumers, transportation, and processing technologies). The market interpretation allows us to establish fundamental properties that explain how physical resources (primal variables) and associated economic values (dual variables) flow in the SC. We use duality theory to explain why incentivizing markets by forcing stakeholder participation (e.g., by imposing demand satisfaction or service provision constraints) yields artificial price behavior, inefficient allocations, and economic losses. To overcome these issues, we explore market incentive mechanisms that use bids; here, we introduce the concept of a stakeholder graph (a product-based representation of a supply chain) and show that this representation allows us to naturally determine minimum bids that activate the market. These results provide guidelines to design SC formulations that properly remunerate stakeholders and to design policy that foster market transactions.  The results are illustrated using an urban waste management problem for a city of 100,000 residents. 
\end{abstract}

{\bf Keywords}: supply chains, coordinated markets, multi-product, waste management, incentives.

\section{Introduction}

Supply chain (SC) optimization is an essential industrial task that has seen significant research in recent years. In a multi-product supply chain setting,  raw materials are transported to processing sites to create valude-added products and these products are in turn transported to final customers. The tenet of most existing SC models is that there exists a {\em central entity} that coordinates all assets in the supply chain (supply, consumption, transportation, and processing) in order to maximize total profit. In other words, the SC problem is interpreted as a central planning problem~\cite{pibernik2006centralised}. The implication of this setting is that the central planner dictates the allocation (distribution) of resources and associated economic value among assets. We direct the interested reader to several comprehensive reviews on the topic \cite{LIMA201678,GARCIA2015153,BARBOSA2014,PAPAGEORGIOU20091931}.

An alternative interpretation of a SC is one in which assets are owned by independent stakeholders that seek to strategically optimize their individual profits. Recently, we have observed that this interpretation matches that of a coordinated market \cite{Sampat2018B}. There is a rich body of literature concerning the use of coordinated markets for power systems~\cite{blumstein2002history,bohn1984optimal,hogan2002electricity,pritchard2010single,zavala2017stochastic}. In fact, most modern power networks are managed using coordinated markets. In such markets, stakeholders submit bids for power supply and consumption and for transportation (transmission) services to a non-profit entity known as the independent system operator (ISO). The ISO uses the bidding information to solve a network optimization problem (known as the market clearing problem) that determines power supply, demand, and transmission allocations that maximize the social welfare and that satisfy capacity constraints and network balance constraints (supply and demand matches at each location). Importantly, the market clearing problem has an equivalent dual representation that reveals the inherit value of power (clearing prices) at every network location. We thus have that the standard representation of the clearing problem (known as the primal representation) determines {\em physical} allocations while the dual representation determines {\em economic} allocations. An important result is that the primal-dual allocations obtained from the clearing problem correspond to those of a {\em competitive equilibrium} in which stakeholders maximize their individual profits. Importantly, this equilibrium is reached instantenously by the market clearing procedure; in a non-coordinated market, equilibrium is reached progressively over time (by peer-to-peer transactions). This desirable coordination behavior can help the market quickly respond to externalities (e.g., extreme weather or shortages of supplies). For instance, in the power grid, the coordination mechanism is essential to respond to extreme weather events and fluctuations of renewable power. The primal-dual allocations obtained from a coordinated market are also such that no stakeholder loses money and such that revenue is balanced in the system (financial gains of suppliers and service providers match financial payments of consumers). Sampat and co-workers recently generalized coordinated market formulations to account for multiple products, processing technologies, and transportation options \cite{Sampat2018B}. This work revealed that it is possible to construct multi-product SC models that inherit desirable economic properties of coordinated markets.

In this work, we exploit the coordinated market interpretation of SCs to derive new economic properties that explain how physical and economic resources are created and flow in the SC. Specifically, we propose a product-based representation of the SC (that we call a stakeholder graph) to  analyze how products and associated values flow from suppliers to consumers through intermidiate transportation and transformation steps. We use this representation to derive a procedure to determine minimum bids that activate a market. This approach can be used by policy makers to determine appropriate incentives that foster market transactions and investment. We use also duality theory to show that incentivizing markets by forcing stakeholder participation (as is done in typical SC settings) can induce revenue imbalances in the system, induce arbitrary price behavior, and can lead to financial losses for some stakeholders.  Our results provide guidelines to derive SC models  that induce desired economic behavior and to design proper incentives by policy makers (e.g., environmental agencies) that foster stakeholder participation. We apply these results to a waste SC arising from municipal solid waste disposal recycling problem. In this problem, the local government is interested in diverting waste from the landfill to recycling facilities~\cite{MALINAUSKAITE20172013,SOMPLAK2019118068,tisserant2017}. To this end, the proposed SC model has available material recycling facilities that offer a more expensive (but sustainable) means of waste disposal. We use our developments to determine bidding prices for recycled materials that are necessary to activate and sustain a recycling market and discuss the implications for policy makers. 

\section{Multi-Product Supply Chains as Coordinated Markets}\label{sec:model}

We represent a multi-product SC comprising a collection of independent stakeholders as a coordinated market. Market coordination brings together potential buyers (consumers) and sellers (suppliers) to resolve transactions in an efficient and competitive manner. Buyers and sellers make their monetary valuations of products (bids) known to the ISO who resolves transactions by solving a market clearing problem. Importantly, the only interest of the ISO in the market is the creation and balancing of economic value; the ISO only coordinates transactions and does not interfere with the objectives of buyers and sellers. In the case of consumers, a bid is the highest price that a consumer is willing to pay to receive a product. Sellers and service providers submit bids representing the lowest price at which they are willing to provide a product or service~\cite{Sampat2018A,Sampat2018B}. An important aspect of the clearing problem is that transactions between consumers and suppliers do not occur if bids do not generate wealth in the market, a property referred to as revenue adequacy. In simple settings, revenue adequacy implies that a consumer bid must be greater than the corresponding bid of a supplier for a transaction to occur. When no transactions occur, the market is said to be {\em dry}. 

In more realistic settings, product transportation within the SC and product transformation technologies are modeled under their own stakeholder classes. An SC is thus a collection of suppliers, transportation services, processing services, and consumers. In such a setting, it is expected that market transactions occur if a consumer bid for a given product ``beats" the collective bids of suppliers, transportation, and transformation  involved in the pathway that generates that product (i.e., the value of the product needs to be higher than the costs associated with supplies, transport, and processing involved in its creation). Establishing exact relationships a general SC setting is  challenging and is the subject of this work. We also highlight that bids should provide natural incentives for stakeholders to participate (or not) in the market. In other words, forcing stakeholder participation (e.g., by using supply and service provision constraints) is not allowed in the market. We will see in Section~\ref{sec: forced markets} that forcing participation can destroy desirable economic properties of a market.

We now present notation for the market clearing problem: SC stakeholders represented in the market are suppliers (denoted by $i\in\mathcal{S}$), consumers or demands (denoted by $j\in\mathcal{D}$), transportation providers (denoted by $l\in\mathcal{L}$), and transformation (processing) providers (denoted by $t\in\mathcal{T}$). The products exchanged in the system are indexed $p\in\mathcal{P}$ and the set of {\em geographical} nodes in the SC graph are $n\in\mathcal{N}$. To enable compact notation, the indices are always associated with the corresponding sets. The connections between nodes define the {\em supply chain graph}, which is a graph (a network) that connects stakeholders. We assume that the SC graph is fully connected (i.e., there exists a path from each node $n\in\mathcal{N}$ to every other $n'\in\mathcal{N}$). An important observation is that the SC graph uses a node-based representation (stakeholders are associated with specific nodes). Later we will see that there is an alternative (product-based) representation of the SC in which that captures how stakeholders are connected via products and how products and associated economic values flow in the system.

Suppliers $i\in\mathcal{S}$ each have a physical supply variable (flow) $s_{i}\in\mathbb{R}_{+}$, a bid capacity $\overline{s}_{i}\in\mathbb{R}_{+}$, a bid cost $\alpha^{s}_{i}\in\mathbb{R}$, and attributes $n(i)\in\mathcal{N},p(i)\in\mathcal{P}$ which indicate the associated node and product of the supplier. The supply bid cost is the cost at which the supplier offers product and is often related to its marginal supply cost. The supply bid capacity is the maximum flow of product that it can offer.  Note that the supply bid cost can be positive or negative. When the bid cost is positive (standard), the supplier expects the market to pay for its product. When the bid cost is negative, the supplier is willing to pay for the market to take away its product, this situation can arise when the product generates a financial loss (e.g., the supplier has excess product or generates a local environment impact). For instance, a city is a supplier that offers to pay the market (negative bid) for taking its waste. 

Each consumer $j\in\mathcal{D}$ has a physical demand variable $d_{j}\in\mathbb{R}_{+}$, a bid capacity $\overline{d}_{j}\in\mathbb{R}_{+}$, a bid cost denoted $\alpha^{d}_{j}\in\mathbb{R}$, and set attributes $n(j)\in\mathcal{N},p(j)\in\mathcal{P}$ indicating its location and desired product. The demand bid cost is the cost that consumer is willing to pay for a product and the capacity is the maximum flow that it can take. A positive bid cost (standard), indicates that the consumer is willing to pay to the market to obtain product. When the bid cost is negative, the consumer expects to be paid in order to take a product, this situation arises when taking product generates a financial loss for the consumer (e.g., an environmental impact). For instance, a landfill is a consumer that requests a payment from the market (negative bid) for taking waste. Consumers with negative bids can also be used to capture policy makers (that represent society of the environment). For instance, a consumer that represents a lake can charge the market if a product (e.g., wastewater) is discharged in it.   

We define the nested sets $\mathcal{S}_{n,p}\subseteq\mathcal{S}_{n}\subseteq\mathcal{S}$ where $S_{n}:=\{i|n(i)=n\}$ and $S_{n,p}:=\{i|n(i)=n,p(i)=p\}$. The set $S_{n}$ includes all suppliers $i$ located at a node $n$; the set $S_{n,p}$ refines inclusion to suppliers of a product $p$ at a node $n$. Following a similar logic, we define the nested sets $\mathcal{D}_{n,p}\subseteq\mathcal{D}_{n}\subseteq\mathcal{D}$ with $D_{n}:=\{j|n(j)=n\}$ and $D_{n,p}:=\{j|n(j)=n,p(j)=p\}$. This notation allows us to categorize suppliers and consumers by product and location. 

Transport providers $l\in\mathcal{L}$ each possesses a physical flow variable $f_{l}\in\mathbb{R}_{+}$, a bid capacity $\overline{f}_{l}\in\mathbb{R}_{+}$, a bid cost $\alpha^{f}_{l}\in\mathbb{R}_{+}$, and attributes $n_{s}(l)\in\mathcal{N}$ (its source node) $n_{r}(l)\in\mathcal{N}$ (its receiving node) and $p(l)\in\mathcal{P}$ representing the product it will move. The bid cost is the cost that the provider charges for its service; the bid cost is assumed here to be positive because it is rare to encounter situations in which it is negative (although mathematically this is possible). We will see that positive transport costs are necessary to prevent inefficient ``cycling" behavior (inefficient use of transport). We apply similar nesting to transport providers $\mathcal{L}$ and extend the sets to capture transport direction. We define $\mathcal{L}_{n,p}^{in}\subseteq\mathcal{L}_{n}^{in}\subseteq\mathcal{L}$ and $\mathcal{L}_{n,p}^{out}\subseteq\mathcal{L}_{n}^{out}\subseteq\mathcal{L}$ to differentiate inbound and outbound transport from a node, where $\mathcal{L}_{n}^{in}:=\{l|n_r(l)=n\}$ and $\mathcal{L}_{n}^{out}:=\{l|n_s(l)=n\}$, and $\mathcal{L}_{n,p}^{in}:=\{l|n_r(l)=n,p(l)=p\}$ and $\mathcal{L}_{n,p}^{out}:=\{l|n_s(l)=n,p(l)=p\}$.

Technology providers $t\in\mathcal{T}$ have physical variables $c_{t}\in\mathbb{R}_{+}$ and $g_{t}\in\mathbb{R}_{+}$,  representing product consumption and generation, respectively. Each $t\in\mathcal{T}$ also has bid capacities $\overline{c}_{t}\in\mathbb{R}_{+}$ and $\overline{g}_{t}\in\mathbb{R}_{+}$, and bid cost  $\alpha^{c}_{t}\in\mathbb{R}_{+}$. The processing bid cost is the cost that the provider charges for providing processing services (e.g., operating costs for a technology). We will see that negative bid costs can induce inefficient use of technologies.   Technology attributes include a node $n(t)\in\mathcal{N}$, a set of input products $\mathcal{P}_{t}^{con}\subseteq\mathcal{P}$ to the technology, and a set of output products $\mathcal{P}_{t}^{gen}\subseteq\mathcal{P}$ generated by the technology.  In other words, the technology can take multiple inputs to generate multiple outputs (e.g., a recycling facility or a waste-to-fuels facility). We use technology inputs and outputs to define the nested sets $\mathcal{T}_{n,p}^{con}\subseteq\mathcal{T}_{n}\subseteq\mathcal{T}$ and $\mathcal{T}_{n,p}^{gen}\subseteq\mathcal{T}_{n}\subseteq\mathcal{T}$ where $\mathcal{T}_{n}:=\{t|n(t)=n\}$, $\mathcal{T}_{n,p}^{con}:=\{t|n(t)=n,p(t)\in\mathcal{P}_{t}^{con}\}$ and $\mathcal{T}_{n,p}^{gen}:=\{t|n(t)=n,p(t)\in\mathcal{P}_{t}^{gen}\}$ such that technologies may be differentiated by their input and output product attributes. We select a product $\bar{p}\in\mathcal{P}_{t}^{con}$ as the {\em reference} product for technology $t$. Yield coefficients $\gamma_{t,p}\in\mathbb{R}_{+}, p\in\{\mathcal{P}_{t}^{con},\mathcal{P}_{t}^{gen}\}$ are defined with respect to this reference product, and $\gamma_{t,\bar{p}}=1$. This structuring of technology providers defines product consumption and generation rates relative to the reference product. Bids for processing  providers are defined with respect to this reference product; the notation $\alpha^{c}_{t}$ indicates that bids are made with respect to their consumption (processing) rates. We note that all supply, demand, transport, and processing flows are positive (because these are physical flows). 

\subsection{Market Clearing Problem}\label{sec: market primal}

The ISO uses stakeholder bidding information $(\alpha^{s},\alpha^{d},\alpha^{f},\alpha^{c})$ and capacities $(\overline{s},\overline{d},\overline{c},\overline{f})$ to solve an optimization problem that is known as the {\em market clearing} problem. The goal of the clearing problem is to allocate physical resources and associated economic values to the stakeholders in the system. It is important to emphasize that the mathematical formulation of the clearing problem defines how economic value is generated and distributed among the stakeholders and thus must be  designed with care. Deriving proper clearing formulations is the central topic of market design. 

The objective of the market clearing problem that we propose is given by \eqref{Obj}. The objective seeks to find physical allocations $(s,d,f,c,g)$ for all involved stakeholders that maximize the social welfare.  Fundamentally, maximizing the social welfare seeks to maximize the balance between value (positive terms) with costs (negative terms). If all the bids are positive, maximizing the social welfare seeks to maximize demand served to consumers while minimizing supply, transport, and processing costs. 
\begin{subequations}
\label{Primal}
\begin{equation}
\label{Obj}
\begin{aligned}
\max_{s,d,f,c,g} \quad \sum_{j \in \mathcal{D}}{\alpha_{j}^{d}}d_{j} - \sum_{i \in \mathcal{S}}{\alpha_{i}^{s}}s_{i} - \sum_{l\in\mathcal{L}}{\alpha^{f}_{l}f_{l}} - \sum_{t\in\mathcal{T}^{con}_{n,\bar{p}}}{\alpha^{c}_{t}c_{t}}
\end{aligned}
\end{equation}
We note that the social welfare indicates that priority will be given to consumers with large bid costs and suppliers and service providers with low bid costs. However, the allocations need to satisfy a number of constraints associated with physical conservation and capacity. To define such constraints, we define the total nodal and product supply and demand allocations on the SC graph as:
\begin{equation}
\label{SupRel}
\begin{aligned}
s_{n,p} = \sum_{i\in\mathcal{S}_{n,p}}{s_{i}}, \quad (n,p)\in\mathcal{N}\times\mathcal{P},\; (\pi^{sup}_{n,p})
\end{aligned}
\end{equation}
\begin{equation}
\label{DemRel}
\begin{aligned}
d_{n,p} = \sum_{j\in\mathcal{D}_{n,p}}{d_{j}}, \quad (n,p)\in\mathcal{N}\times\mathcal{P},\; (\pi^{dem}_{n,p})
\end{aligned}
\end{equation}
Here, the notation  $s_{i}$ and $d_{j}$ is is short-hand notation for $s_{n(i),p(i)}$ and $d_{n(j),p(j)}$ (mapping of stakeholders onto the SC graph). The variable $s_{n,p}\in\mathbb{R}_{+}$ is the total supply flow injected at a given node and for a given product. The variable $d_{n,p}\in\mathbb{R}_{+}$ is the total demand flow withdrawn from a given node and for a given product. These definitions capture the fact that multiple stakeholders $i\in\mathcal{S}$ or $j\in\mathcal{D}$ can bid for the same product or from the same location. We also define the dual variables corresponding to these constraints as $\pi^{sup}_{n,p}\in\mathbb{R}$ and $\pi^{dem}_{n,p}\in\mathbb{R}$.

Physical conservation within the model is provided by:
\begin{equation}
\label{Balance}
\begin{aligned}
s_{n,p} + \sum_{l\in\mathcal{L}^{in}_{n,p}}{f_{l}} +
 \sum_{t\in\mathcal{T}_{n,p}^{gen}}{g_{t}} = d_{n,p} + \sum_{l\in\mathcal{L}^{out}_{n,p}}{f_{l}} + \sum_{t\in\mathcal{T}_{n,p}^{con}}{c_{t}}, \quad (n,p)\in\mathcal{N}\times\mathcal{P},\; (\pi^{bal}_{n,p})
\end{aligned}
\end{equation}
In a market context, these conservation constraints are referred to as the {\em clearing constraints}. From the perspective of a SC, these are known as nodal product balances. These constraints ensure that incoming supply, transport flows, and generated flows for a given product and  node equal demand served, outgoing transportation, and product consumption. The dual variables of these constraints are defined as $\pi^{bal}_{n,p}\in\mathbb{R}$. In Section \ref{sec: properties} we will demonstrate that these dual variables represent nodal market clearing prices and play an important role in defining economic value for products at different locations and in determining how stakeholders are charged or remunerated.

Constraint \eqref{Conversion} defines product transformation (conversion) relationships for the processing technologies. Product transformations are expressed as linear equations with yield parameters $\gamma_{t,p}\in\mathbb{R}_{+}$ for each product involved. The dual variable to \eqref{Conversion} is $\pi^{con}_{t,n,p,p'}\in\mathbb{R}$.
\begin{equation}
\label{Conversion}
\begin{aligned}
\gamma_{t,p'}g_{t,n(t),p(t)} = \gamma_{t,p}c_{t,n(t),p'(t)}, \quad (t,n,p,p')\in\mathcal{T}\times\mathcal{N}\times\mathcal{P}_{t}^{gen}\times\mathcal{P}_{t}^{con},\; (\pi^{con}_{t,n,p,p'})
\end{aligned}
\end{equation}

Equations \eqref{Smax} to \eqref{gmax} provide bounds for stakeholder allocations (maximum flows they can provide or take). Nodal supply and demand capacities $\overline{s}_{n,p}\in \mathbb{R}_{+}$ and $\overline{d}_{n,p}\in \mathbb{R}_{+}$ are derived from $\overline{s}_{i}$ and $\overline{d}_{j}$ and are equivalent. The dual variables for each of the upper bounding constraints are defined as $\overline{\lambda}_{i}\in \mathbb{R}_{+}$, $\overline{\lambda}_{j}\in \mathbb{R}_{+}$, $\overline{\lambda}^{d}_{n,p}\in \mathbb{R}_{+}$, $\overline{\lambda}{s}_{n,p}\in \mathbb{R}_{+}$, $\overline{\lambda}_{l}\in \mathbb{R}_{+}$, $\overline{\lambda}^{c}_{t}\in \mathbb{R}_{+}$, and $\overline{\lambda}^{g}_{t}\in \mathbb{R}_{+}$.
\begin{equation}
\label{Smax}
\begin{aligned}
s_{i}\leq \overline{s}_{i},\quad i\in\mathcal{S},\; (\overline{\lambda}_{i})
\end{aligned}
\end{equation}
\begin{equation}
\label{Dmax}
\begin{aligned}
d_{j}\leq \overline{d}_{j},\quad j\in\mathcal{D},\; (\overline{\lambda}_{j})
\end{aligned}
\end{equation}
\begin{equation}
\label{dmax}
\begin{aligned}
d_{n,p}\leq \overline{d}_{n,p},\;(n,p)\in \mathcal{N}\times\mathcal{P},\; (\overline{\lambda}^{d}_{n,p})
\end{aligned}
\end{equation}
\begin{equation}
\label{smax}
\begin{aligned}
s_{n,p}\leq \overline{s}_{n,p},\;(n,p)\in \mathcal{N}\times\mathcal{P},\; (\overline{\lambda}^{s}_{n,p})
\end{aligned}
\end{equation}
\begin{equation}
\label{fmax}
\begin{aligned}
f_{l}\leq \overline{f}_{l},\;(l)\in \mathcal{L},\; (\overline{\lambda}_{l})
\end{aligned}
\end{equation}
\begin{equation}
\label{xmax}
\begin{aligned}
c_{t}\leq \overline{c}_{t},\;(t)\in \mathcal{T},\; (\overline{\lambda}^{c}_{t})
\end{aligned}
\end{equation}
\begin{equation}
\label{gmax}
\begin{aligned}
g_{t}\leq \overline{g}_{t},\;(t)\in \mathcal{T},\; (\overline{\lambda}^{g}_{t})
\end{aligned}
\end{equation}
\end{subequations}

In summary, the market clearing model is given by  the set of equations \eqref{Primal} and seeks allocations that maximize the social welfare and satisfy conservation and capacity constraints. We also highlight the presence of dual variables (also known as hidden or shadow variables); in typical SC settings these variables are ignored, but in a market interpretation these play a key role in establishing economic values for products and services and dictate stakeholder remuneration. 

\subsection{Economic Properties of Bids, Prices, and Profits}\label{sec: properties}

In this section we will see that bidding information provided to the ISO (bid costs and capacities) define the feasible space for market clearing allocations $(s,d,f,c,g)$ and prices $\pi^{bal}_{n,p}$. This is of interest because clearing prices and physical allocations represent charges to consumers and remuneration for suppliers, transporters, and technology providers. 

The dual variables $\pi^{bal}_{n,p}$ are indexed by SC node $n\in\mathcal{N}$ and product $p\in\mathcal{P}$ (they represent the value of a product $p$ at a node $n$); accordingly, they are also referred to as nodal prices. These nodal prices represent the economic value of products  at specific geographical locations.  The following relationships define short-hand notation for prices (as done for allocations):
\begin{subequations}\label{NodalPrices}
\begin{equation}
\label{pi_i}
\begin{aligned}
\pi_{i} := \pi^{bal}_{n(i),p(i)},\; i\in\mathcal{S}
\end{aligned}
\end{equation}
\begin{equation}
\label{pi_j}
\begin{aligned}
\pi_{j} := \pi^{bal}_{n(j),p(j)},\; j\in\mathcal{D}
\end{aligned}
\end{equation}
Geographical differences between nodal prices capture the economic value of product transportation and these are defined as:
\begin{equation}
\label{pi_f}
\begin{aligned}
\pi_{l} := \pi^{bal}_{n'(l),p(l)} - \pi^{bal}_{n(l),p(l)},\; l\in\mathcal{L}
\end{aligned}
\end{equation}
We define the economic value of product transformation, which is given by the difference between the prices of products consumed and generated by a technology (weighted by yield coefficients $\gamma_{t,p}$):
\begin{equation}
\label{pi_t}
\begin{aligned}
\pi_{t} := \sum_{p\in\mathcal{P}_{t}^{gen}}{\gamma_{t,p}\pi^{bal}_{n(t),p}} - \sum_{p'\in\mathcal{P}_{t}^{con}}{\gamma_{t,p'}\pi^{bal}_{n(t),p'}},\; t\in\mathcal{T}
\end{aligned}
\end{equation}
\end{subequations}
We denote the entire set of market clearing prices $\pi=(\pi_{i},\pi_{j},\pi_{l},\pi_{t})$; this shorthand notation allows us to associate economic  values and profits with stakeholders in an intuitive way. 

In the proposed setting,  $\alpha^{d}_{j}d_{j}$ is the economic value of its bid allocation and $\pi_{j}d_{j}$ is its remuneration from the market. The consumer thus has an incentive to participate in the market (buy product) if $\alpha^{d}_{j}>\pi_{j}$ (the value provided by the product is higher than what it pays for). For suppliers we have that $\alpha^{s}_{i}s_{i}$ is the value of the supplied  product  (interpreted as a marginal or operating cost) and receive $\pi_{i}s_{i}$ in revenue from the market. Suppliers are thus motivated to sell product to the market if $\pi_{i}>\alpha^{s}_{i}$. Using a similar logic, transportation and technology providers have operating costs of $\alpha^{f}_{l}f_{l}$ and $\alpha^{c}_{t}c_{t}$ and they receive $\pi_{l}f_{l}$ and $\pi_{t}c_{t}$ in revenue accordingly from the market. Specifically, these players are motivated to provide services to the market if $\pi_{l}>\alpha^{f}_{l}$ and $\pi_{t}>\alpha^{c}_{t}$. Following this logic, we define the stakeholder profits as:
\begin{subequations}\label{Profits}
\begin{equation}
\label{Phi_s}
\begin{aligned}
\phi^{s}_{i}(\pi_{i},\alpha^{s}_{i},s_{i}) = (\pi_{i} - \alpha^{s}_{i})s_{i},\quad i\in\mathcal{S}
\end{aligned}
\end{equation}
\begin{equation}
\label{Phi_d}
\begin{aligned}
\phi^{d}_{j}(\pi_{j},\alpha^{d}_{j},d_{j}) = (\alpha^{d}_{j} - \pi_{j})d_{j},\quad j\in\mathcal{D}
\end{aligned}
\end{equation}
\begin{equation}
\label{Phi_f}
\begin{aligned}
\phi^{f}_{l}(\pi_{l},\alpha^{f}_{l},f_{l}) = (\pi_{l} - \alpha^{f}_{l})f_{l},\quad l\in\mathcal{L}
\end{aligned}
\end{equation}
\begin{equation}
\label{Phi_t}
\begin{aligned}
\phi^{c}_{t}(\pi_{t},\alpha^{c}_{t},c_{t}) = (\pi_{t} - \alpha^{c}_{t})c_{t},\quad t\in\mathcal{T}
\end{aligned}
\end{equation}
\end{subequations}
Intuitively, players have an incentive to participate in the market if their profits are positive, are ambivalent if they are zero, and do not have an incentive if they are negative. Accordingly, we want to ensure that the market clearing formulation delivers allocations and clearing prices that lead to non-negative profits (otherwise, the market does not provide a viable environment for the stakeholders). We use $\phi=(\phi^{s}_{i},\phi^{d}_{j},\phi^{f}_{l},\phi^{c}_{t})$ as  shorthand notation for all profits. 

We now show that Lagrangian dual of the market clearing model indicates that pricing and profit definitions emerge naturally. The Lagrangian dual problem is defined as:
\begin{equation}
\begin{aligned}
\max_{\pi}\min_{s,d,f,c,g}{\mathcal{L}(s,d,f,c,g,\pi)}.
\end{aligned}
\end{equation}
The Lagrange function is constructed in \eqref{CMLagrangian}, and contains the supply and demand geographical mapping constraints, the market clearing constraint, and the conversion constraint which are all raised into the Lagrangian function with their appropriate dual variables.
\begin{multline}
\label{CMLagrangian}
\mathcal{L}(s,d,f,c,g,\pi)=-\sum_{j \in \mathcal{D}}{\alpha_{j}^{d}}d_{j} + \sum_{i \in \mathcal{S}}{\alpha_{i}^{s}}s_{i} + \sum_{l\in\mathcal{L}}{\alpha^{f}_{l}f_{l}} + \sum_{t\in\mathcal{T}^{con}_{n,p}}{\alpha^{c}_{t}c_{t}}\\
+\sum_{n\in\mathcal{N}}\sum_{p\in\mathcal{P}}\pi^{sup}_{n,p}\left(\sum_{i\in\mathcal{S}_{n,p}}{s_{i}} - s_{n,p}\right)
+  \sum_{n\in\mathcal{N}}\sum_{p\in\mathcal{P}}\pi^{dem}_{n,p}\left(d_{n,p} - \sum_{j\in\mathcal{D}_{n,p}}{d_{j}}\right)\\
+ \sum_{n\in\mathcal{N}}\sum_{p\in\mathcal{P}}\pi^{bal}_{n,p}\left(d_{n,p} + \sum_{l\in\mathcal{L}^{out}_{n,p}}{f_{l}} + \sum_{t\in\mathcal{T}_{n,p}^{con}}{c_{t}} - s_{n,p} - \sum_{l\in\mathcal{L}^{in}_{n,p}}{f_{l}} -
\sum_{t\in\mathcal{T}_{n,p}^{gen}}{g_{t}}\right)\\
+\sum_{t\in\mathcal{T}}\sum_{n\in\mathcal{N}}\sum_{p'\in\mathcal{P}^{con}_{t}}\sum_{p\in\mathcal{P}^{gen}_{t}}{\pi^{con}_{t,n,p,p'}\left(\gamma_{t,p}c_{t,n(t),p'(t)}  -\gamma_{t,p'}g_{t,n(t),p(t)}\right)}
\end{multline}
The market clearing formulation is linear and thus satisfies strong duality \cite{bazaraa2011linear}. Strong duality indicates that a solution of the Lagrangian dual problem is also a solution of the primal formulation).

The Lagrange function can be simplified by using the following identities:
\begin{subequations}\label{Identities}
\begin{equation}
\begin{aligned}
\sum_{n\in\mathcal{N}}\sum_{p\in\mathcal{P}}\pi^{bal}_{n,p}\sum_{i\in\mathcal{S}_{n,p}}s_{i}=\sum_{i\in\mathcal{S}}\pi_{i}s_{i},
\end{aligned}
\end{equation}
\begin{equation}
\begin{aligned}
\sum_{n\in\mathcal{N}}\sum_{p\in\mathcal{P}}\pi^{bal}_{n,p}\sum_{j\in\mathcal{D}_{n,p}}d_{j}=\sum_{j\in\mathcal{D}}\pi_{j}d_{j},
\end{aligned}
\end{equation}
\begin{equation}
\begin{aligned}
\sum_{n\in\mathcal{N}}\sum_{p\in\mathcal{P}}\pi^{bal}_{n,p}\left(\sum_{l\in\mathcal{L}^{in}_{n,p}}f_{l}-\sum_{l\in\mathcal{L}^{out}_{n,p}}f_{l}\right)&=\sum_{l\in\mathcal{L}}\left(\pi^{bal}_{n'(l),p(l)}-\pi^{bal}_{n(l),p(l)}\right)f_{n(l),n'(l),p(l)}\\
&=\sum_{l\in\mathcal{L}}\pi_{l}f_{l}
\end{aligned}
\end{equation}
and,
\begin{equation}
\begin{aligned}
\sum_{n\in\mathcal{N}}\sum_{p\in\mathcal{P}}\pi^{bal}_{n,p}\left(\sum_{t\in\mathcal{T}^{gen}_{n,p}}g_{t,n,p}-\sum_{t\in\mathcal{T}^{con}_{n,p}}c_{t,n,p}\right)&=\sum_{n\in\mathcal{N}}\sum_{p\in\mathcal{P}}\pi^{bal}_{n,p}\left(\sum_{t\in\mathcal{T}^{gen}_{n,p}}\gamma_{t,p}c_{t,n,\bar{p}} - \sum_{t\in\mathcal{T}^{con}_{n,p}}\gamma_{t,p}c_{t,n,\bar{p}}\right)\\
&=\sum_{t\in\mathcal{T}}\pi_{t}c_{t,n(t),\bar{p}(t)}
\end{aligned}
\end{equation}
\end{subequations}
The technology price identity makes use of yield factor definitions to rewrite the expression solely in terms of the reference product $\bar{p}(t)$.

We observe that terms associated with the dual variables $\pi^{sup}_{n,p}$, $\pi^{dem}_{n,p}$, and $\pi^{con}_{t,n,p,p'}$ each add to zero and thus cancel out of the Lagrange function. Applying the price identities to the remaining terms leads to \eqref{FinalLagrangian}. This analysis allows us to express the Lagrangian solely in terms of stakeholder profits and indicates that the Lagrangian dual minimizes negative profits (maximizes profits). 
\begin{equation}
\label{FinalLagrangian}
\begin{aligned}
\mathcal{L}(s,d,f,c,g,\pi)=&-\sum_{j \in \mathcal{D}}{\alpha_{j}^{d}}d_{j} + \sum_{i \in \mathcal{S}}{\alpha_{i}^{s}}s_{i} + \sum_{l\in\mathcal{L}}{\alpha^{f}_{l}f_{l}} + \sum_{t\in\mathcal{T}}{\alpha^{c}_{t}c_{t,n(t),\bar{p}(t)}}\\
&\qquad +\sum_{j\in\mathcal{D}}\pi_{j}d_{j}-\sum_{i\in\mathcal{S}}\pi_{i}s_{i}-\sum_{l\in\mathcal{L}}\pi_{l}f_{l}-\sum_{t\in\mathcal{T}}\pi_{t}c_{t,n(t),\bar{p}(t)}\\
=&-\left(\sum_{j \in \mathcal{D}}{\phi_{j}^{d}} + \sum_{i \in \mathcal{S}}{\phi_{i}^{s}} + \sum_{l\in\mathcal{L}}{\phi^{f}_{l}} + \sum_{t\in\mathcal{T}}{\phi^{c}_{t}}\right)
\end{aligned}
\end{equation}

\subsection{Dual of Market Clearing Problem}\label{sec: market dual}

The dual formulation to the clearing model provides additional insight (not obvious from the Lagrangian dual) on economic properties that emerge from the clearing procedure. We first note that it is possible to condense the primal clearing formulation \eqref{Primal} by incorporating the conversion constraints directly into the market clearing constraints, and writing the model exclusively in terms of stakeholder-indexed supply and demand volumes. The condensed model also replaces the variables $c_{t}$ and $g_{t}$ with a single technology flow variable $\xi_{t}\in\mathbb{R}_{+}$ with the same attributes. The condensed primal clearing formulation is given by  \eqref{PrimalCondensed}.
\begin{subequations}\label{PrimalCondensed}
\begin{equation}
\label{ObjCondensed}
\begin{aligned}
\max_{s,d,f,\xi} \quad \sum_{j \in \mathcal{D}}{\alpha_{j}^{d}}d_{j} - \sum_{i \in \mathcal{S}}{\alpha_{i}^{s}}s_{i} - \sum_{l\in\mathcal{L}}{\alpha^{f}_{l}f_{l}} - \sum_{t\in\mathcal{T}}{\alpha^{\xi}_{t}\xi_{t}}
\end{aligned}
\end{equation}
\begin{multline}
\label{BalanceCondensed}
\textrm{s.t.}\; \sum_{i\in\mathcal{S}_{n,p}}{s_{i}} + \sum_{l\in\mathcal{L}^{in}_{n}}{f_{l}} +
\sum_{t\in\mathcal{T}_{n,p}^{gen}}{\gamma_{t,p}\xi_{t}} =
\sum_{j\in\mathcal{D}_{n,p}}{d_{j}} + \sum_{l\in\mathcal{L}^{out}_{n}}{f_{l}} + \sum_{t\in\mathcal{T}_{n,p}^{con}}{\gamma_{t,p}\xi_{t}}, \quad (n,p)\in\mathcal{N}\times\mathcal{P},\; (\pi^{bal}_{n,p})
\end{multline}
\begin{equation}
\label{SmaxCondensed}
\begin{aligned}
s_{i}\leq \overline{s}_{i},\quad i\in\mathcal{S},\; (\overline{\lambda}_{i})
\end{aligned}
\end{equation}
\begin{equation}
\label{DmaxCondensed}
\begin{aligned}
d_{j}\leq \overline{d}_{j},\quad j\in\mathcal{D},\; (\overline{\lambda}_{j})
\end{aligned}
\end{equation}
\begin{equation}
\label{fmaxCondensed}
\begin{aligned}
f_{l}\leq \overline{f}_{l},\quad l\in\mathcal{L},\; (\overline{\lambda}_{l})
\end{aligned}
\end{equation}
\begin{equation}
\label{xmaxCondensed}
\begin{aligned}
\xi_{t}\leq \overline{\xi}_{t},\quad t\in \mathcal{T},\; (\overline{\lambda}_{t})
\end{aligned}
\end{equation}
\end{subequations}
The corresponding condensed dual of the condensed formulation is given by \eqref{LPDualCondensed}. The dual objective is presented in \eqref{dualObjCondensed}; this function contains only the dual variables corresponding to the capacity constraints of \eqref{Primal}. This provides some intuition regarding dual behavior; the dual objective minimizes capacity constraint margins. In other words, it seeks to extract as much capacity from the players as possible. How these margins relate to market prices and bid costs is defined by the dual constraints.
\begin{subequations}\label{LPDualCondensed}
\begin{equation}
\label{dualObjCondensed}
\begin{aligned}
\min_{\pi,\overline{\lambda}}
\sum_{i \in \mathcal{S}}{\overline{s}_{i}\overline{\lambda}_{i}} +
\sum_{j \in \mathcal{D}}{\overline{d}_{j}\overline{\lambda}_{j}} +
\sum_{l\in\mathcal{L}}{\overline{f}_{l}\overline{\lambda}_{l}} +
\sum_{t\in\mathcal{T}}{\overline{\xi}_{t}\overline{\lambda}_{t}}
\end{aligned}
\end{equation}
\begin{equation}
\label{dualCon_sCondensed}
\begin{aligned}
\pi^{bal}_{n(i),p(i)} - \overline{\lambda}_{i} \geq \alpha_{i}^{s},\; i\in\mathcal{S}
\end{aligned}
\end{equation}
\begin{equation}
\label{dualCon_dCondensed}
\begin{aligned}
\pi^{bal}_{n(j),p(j)} + \overline{\lambda}_{j} \leq \alpha_{j}^{d},\; j\in\mathcal{D}
\end{aligned}
\end{equation}
\begin{equation}
\label{dualCon_fCondensed}
\begin{aligned}
\pi^{bal}_{n(l),p(l)} - \pi^{bal}_{n'(l),p(l)} - \overline{\lambda}_{n(l),n'(l),p(l)} \geq \alpha^{f}_{l},\; l\in\mathcal{L}
\end{aligned}
\end{equation}
\begin{equation}
\label{dualCon_xCondensed}
\begin{aligned}
\sum_{p\in\mathcal{P}^{gen}_{t}}{\gamma_{t,p}\pi^{bal}_{n(t),p(t)}} - \sum_{p\in\mathcal{P}^{con}_{t}}{\gamma_{t,p}\pi^{bal}_{n(t),p(t)}} - \overline{\lambda}_{t} \geq \alpha^{\xi}_{t},\; t\in\mathcal{T}
\end{aligned}
\end{equation}
\end{subequations}
The dual problem reveals the same important pricing properties as determined using the Lagrangian dual. Rewriting the dual constraints in the compact form \eqref{DualConstraintPrices} (with $\pi_{i}$ and $\pi_{j}$ representing nodal prices for suppliers and consumers, $\pi_{l}$ for transportation prices, and $\pi_{t}$ for technology prices) gives:
\begin{subequations}\label{DualConstraintPrices}
\begin{equation}
\label{dualCon_sIdent}
\begin{aligned}
\pi_{i} - \overline{\lambda}_{i} \geq \alpha_{i}^{s},\; i\in\mathcal{S}
\end{aligned}
\end{equation}
\begin{equation}
\label{dualCon_dIdent}
\begin{aligned}
\pi_{j} + \overline{\lambda}_{j} \leq \alpha_{j}^{d},\; j\in\mathcal{D}
\end{aligned}
\end{equation}
\begin{equation}
\label{dualCon_fIdent}
\begin{aligned}
\pi_{l} - \overline{\lambda}_{l} \geq \alpha^{f}_{l},\; l\in\mathcal{L}
\end{aligned}
\end{equation}
\begin{equation}
\label{dualCon_xIdent}
\begin{aligned}
\pi_{t} - \overline{\lambda}_{t} \geq \alpha^{\xi}_{t},\; t\in\mathcal{T}
\end{aligned}
\end{equation}
\end{subequations}
The dual thus captures price relationships to bids and indicate that market prices are not allowed to exceed consumer bids nor fall below supplier, transporter, and technology bids (recall that the duals $\overline\lambda$ are non-negative). The presence of $\overline\lambda$ in each relationship implies that price values are related to whether or not market demands are fully served under the interpretation of $\overline\lambda$ as a marginal price. Importantly, the arrangement of these $\overline\lambda$ is such that the bid bounding properties cannot be violated (i.e., the duals serve to tighten these constraints).

Note that \eqref{dualCon_sIdent} can also be written as:
\begin{equation*}
\pi_{i} -\alpha_{i}^{s}  \geq \overline{\lambda}_{i},\; i\in\mathcal{S}
\end{equation*}
This reveals that $\pi_{i} -\alpha_{i}^{s}\geq 0$ ($\pi_{i}\geq \alpha_{i}^{s}$); moreover, this indicates minimizing the dual objective (minimize  $\overline{\lambda}_{i}$), seeks to widen the gap between the clearing price $\pi_{i}$ and the supplier bid $\alpha_{i}^{s}$ (and with this remunerate the stakeholder more). Similar logic can be followed for the other stakeholders. 

The dual constraints can be interpreted as {\em economic driving forces}. In each of the constraints, a price value $\pi_{i}$, $\pi_{j}$, $\pi_{l}$, or $\pi_{t}$ higher than (for suppliers, transportation providers, and technology providers) or lower than (for consumers) the associated bid value represents a positive profit associated with the transaction and provides a driving force for that stakeholder to participate in the market. The price identities help provide some context for these driving forces; a positive transport price $\pi_{l}$ with $n(l)$, $n'(l)$, and $p(l)$ implies an economic driving force to move product $p$ from node $n$ to node $n'$. A positive technology price $\pi_{t}$ with product outputs $\mathcal{P}^{gen}_{t}$ and product inputs $\mathcal{P}^{con}_{t}$ implies an economic driving force to transform the set of products $\mathcal{P}^{con}_{t}$ into the set of products $\mathcal{P}^{gen}_{t}$ because the generated products are worth more than the products consumed (accounting for yield coefficients). Moreover, these relationships imply that negative profits are not allowed in coordinated markets. In other words, the clearing procedure delivers allocations such that no stakeholder loses money. 

The relationships between the primal and dual formulations are visualized in Figure~\ref{fig_PrimalDual}. The primal formulation of the ISO market clearing problem provides a representation explicitly in terms of the physical allocations $(s,d,f,\xi)$ while the dual representation is explicit in economic allocations $\pi$. Using either model formulation, the other half of the solution is obtained from constraint shadow prices. The correspondence between each primal and dual variable to a constraint in the other formulation is reiterated in Figure~\ref{fig_PrimalDual}.
\begin{figure}[!htb]
	\center{\includegraphics[width=1.0\textwidth]{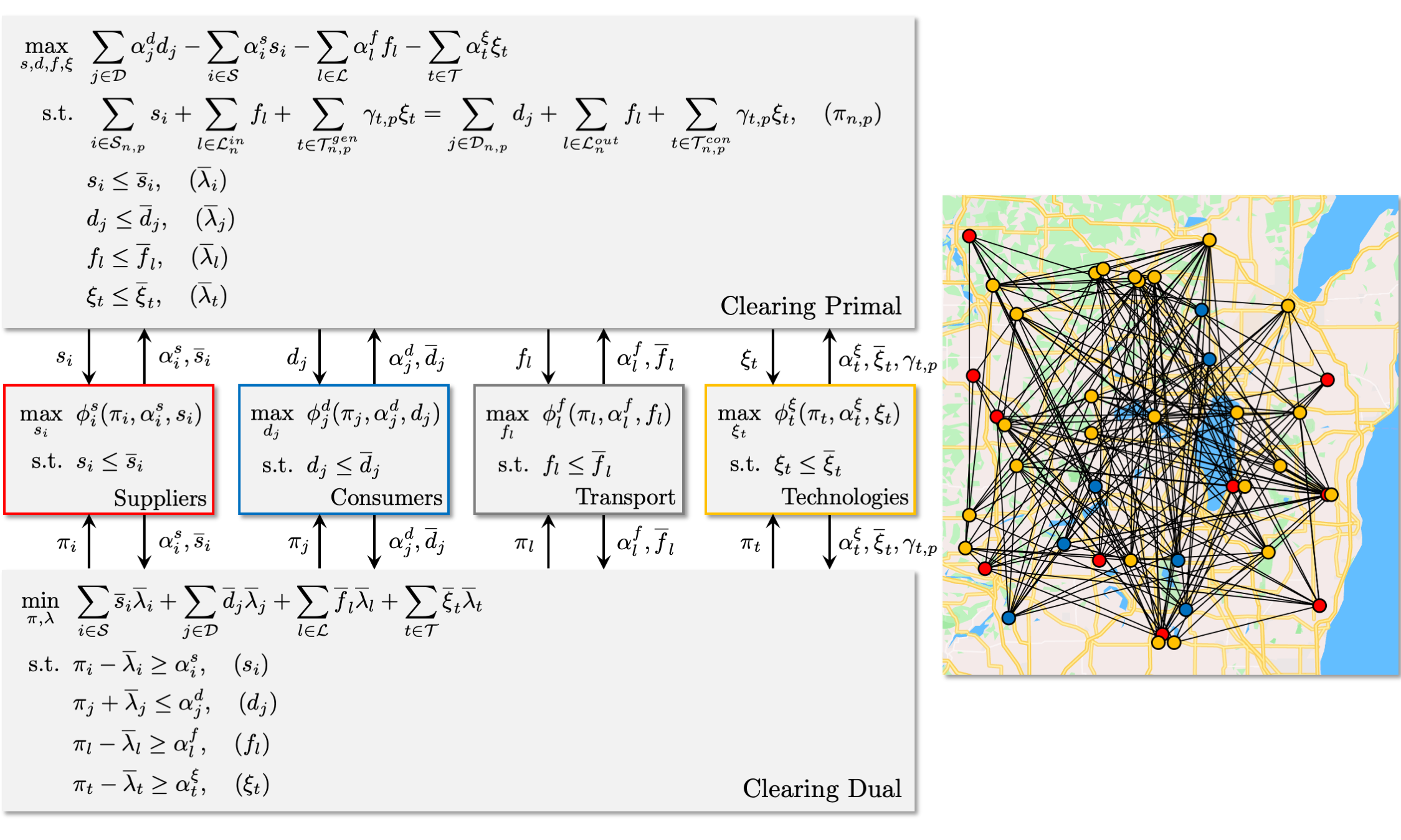}}
	\caption{Complementary primal and dual views of coordinated market. Suppliers, consumers, transport providers, and technology providers submit bidding information to the ISO, who solves the clearing problem. The clearing problem maximizes the social welfare (profit) of all stakeholders by balancing supply and demand, finding efficient transportation routes, and managing product transformations. The primal view of the clearing problem defines physical allocations while the dual view defines economic allocations. Primal-dual variable correspondence is indicated for each constraint.}
	\label{fig_PrimalDual}
\end{figure}

\subsection{Theoretical Properties of Coordinated Markets}

We now exploit the primal-dual properties of the clearing formulation to define fundamental economic properties for the market. Each of these has been established in Sampat \textit{et. al.}~\cite{Sampat2018B} and we sketch the proofs here for completeness. These theorems are provided as a basis for further analysis, and we thus provide context around each.
\begin{theorem}
	\label{ThmProfit}
	The market delivers prices $\pi$ and allocations $(s,d,f,c,g)$ that maximize the collective profits $(\phi^{s},\phi^{d},\phi^{f},\phi^{c})$ and the profits are all nonnegative.
\end{theorem}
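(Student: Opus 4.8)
The statement has two parts: (i) the market allocations and prices maximize collective profits, and (ii) each individual profit is nonnegative. The plan is to obtain part (i) directly from the Lagrangian analysis already carried out in \eqref{FinalLagrangian}, and part (ii) from the dual constraints \eqref{DualConstraintPrices} together with complementary slackness.

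\textbf{Part (i).} The key observation is that \eqref{FinalLagrangian} shows $\mathcal{L}(s,d,f,c,g,\pi) = -\big(\sum_{j}\phi^{d}_{j} + \sum_{i}\phi^{s}_{i} + \sum_{l}\phi^{f}_{l} + \sum_{t}\phi^{c}_{t}\big)$, i.e. the Lagrange function is exactly the negative of the collective profit. By strong duality for the linear clearing program (invoked earlier via \cite{bazaraa2011linear}), the primal optimal allocations together with the optimal multipliers $\pi$ solve the saddle-point problem $\max_{\pi}\min_{s,d,f,c,g}\mathcal{L}$. First I would note that at a primal-dual optimal pair the inner minimization over $(s,d,f,c,g)$ is attained at the primal solution and the outer maximization over $\pi$ is attained at the dual solution; hence the clearing allocations minimize $\mathcal{L}$ in the physical variables, which is the same as maximizing the collective profit $\sum_{j}\phi^{d}_{j} + \sum_{i}\phi^{s}_{i} + \sum_{l}\phi^{f}_{l} + \sum_{t}\phi^{c}_{t}$ over the feasible set. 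This gives the maximization claim essentially for free from the identities \eqref{Identities}.

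\textbf{Part (ii).} For nonnegativity I would argue stakeholder by stakeholder using \eqref{DualConstraintPrices} and complementary slackness. Take a supplier $i\in\mathcal{S}$: the dual constraint \eqref{dualCon_sIdent} rearranges to $\pi_{i}-\alpha^{s}_{i}\geq\overline{\lambda}_{i}\geq 0$, so $\phi^{s}_{i} = (\pi_{i}-\alpha^{s}_{i})s_{i}\geq 0$ because $s_{i}\geq 0$. The same reasoning applies verbatim to transport providers via \eqref{dualCon_fIdent} and to technology providers via \eqref{dualCon_xIdent} (using $c_{t}=\gamma_{t,\bar p}\xi_{t}\geq 0$ and $\gamma_{t,\bar p}=1$), and for consumers \eqref{dualCon_dIdent} gives $\alpha^{d}_{j}-\pi_{j}\geq\overline{\lambda}_{j}\geq 0$ so $\phi^{d}_{j}=(\alpha^{d}_{j}-\pi_{j})d_{j}\geq 0$. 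Thus every profit term is a product of two nonnegative quantities and is nonnegative.

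\textbf{Main obstacle.} The routine work is bookkeeping: verifying that the condensed dual constraints \eqref{DualConstraintPrices} carry over to the uncondensed model (so that the profits defined in \eqref{Profits} with variables $c_{t}$ and the primal prices $\pi_{t}$ of \eqref{pi_t} line up with the condensed quantities), and checking the sign conventions on each $\overline{\lambda}$. The one genuinely substantive point I would be careful about is that part (i) claims a joint maximum over all stakeholders simultaneously, not merely that no single stakeholder can do better; this is exactly what the saddle-point/strong-duality argument delivers, since $\min_{s,d,f,c,g}\mathcal{L}$ over the \emph{entire} feasible set equals $-(\text{collective profit at the clearing allocation})$, so no feasible reallocation can raise the collective profit. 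I do not expect any analytic difficulty beyond this.
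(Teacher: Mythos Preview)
Your argument is correct, but Part~(ii) takes a genuinely different route from the paper. The paper does not invoke the explicit dual constraints \eqref{DualConstraintPrices} at all for this theorem; instead it argues directly from the Lagrangian identity \eqref{FinalLagrangian}: for any fixed $\pi$, the inner minimization of $\mathcal{L}$ over $(s,d,f,c,g)$ is separable across stakeholders, so it decomposes into independent maximizations of each $\phi^{s}_{i},\phi^{d}_{j},\phi^{f}_{l},\phi^{c}_{t}$ over the box $[0,\overline{\cdot}\,]$. Since the zero allocation is feasible for each stakeholder and yields zero profit, every individually maximized profit is $\geq 0$; strong duality then ensures these individually optimal allocations coincide with the primal clearing solution at the optimal $\pi$. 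Your approach instead reads off $\pi_{i}-\alpha^{s}_{i}\geq\overline{\lambda}_{i}\geq 0$ (and analogues) from dual feasibility and multiplies by the nonnegative allocation --- a cleaner, purely algebraic route to nonnegativity that avoids the separability observation entirely.

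The trade-off is worth noting. The paper's separability argument simultaneously delivers \emph{individual} profit maximization for each stakeholder given the clearing prices, which is precisely the competitive-equilibrium content reused verbatim in Theorem~\ref{ThmComp}. Your Part~(i) establishes only that the \emph{sum} of profits is maximized (you flag this distinction yourself), and your Part~(ii) gives nonnegativity but not individual optimality; to recover the per-stakeholder maximization you would need an additional complementary-slackness step. So your proof is self-contained and valid for Theorem~\ref{ThmProfit} as stated, but the paper's version does double duty by setting up Theorem~\ref{ThmComp} for free.
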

\begin{proof}
	For some arbitrary and fixed set of prices $\pi$ the allocation $(s,d,f,c,g)=(0,0,0,0,0)$ results in zero profits $\phi=(0,0,0,0)$ for all stakeholders. The Lagrangian \eqref{FinalLagrangian} is the total of negative profits, and $\mathcal{L}(0,0,0,0,0,\pi)=0$. Thus, subject to fixed $\pi$, solving the Lagrangian dual problem produces an allocation $(s,d,f,c,g)$ that minimizes the Lagrangian function, meaning $\mathcal{L}(s,d,f,c,g)\leq0$. Under fixed $\pi$, solving the Lagrangian is equivalent to maximizing the sum of the stakeholder profits, i.e., maximizing stakeholder profits independently, and therefore the allocations $(s,d,f,c,g)$ are at least as good as $(0,0,0,0,0)$. It follows that $\phi^{s}_{i}(\pi_{i},s_{i})\geq0$, $\phi^{d}_{j}(\pi_{j},d_{j})\geq0$, $\phi^{f}_{l}(\pi_{l},f_{l})\geq0$, and $\phi^{c}_{t}(\pi_{t},c_{t})\geq0$. Since $\pi$ are arbitrary prices, we have that the profits are non-negative for optimal clearing prices and the corresponding allocations.
\end{proof}

Stakeholder profits will be either non-negative if a stakeholder participates in the market, or zero if it does not. The interpretation here is an important one: formulating a SC as a coordinated market is sufficient to ensure that every stakeholder is sufficiently remunerated, else some stakeholders refuse to participate and some part of the market becomes dry.
\begin{theorem}
	\label{ThmComp}
	The market delivers prices $\pi$ and allocations $(s,d,f,c,g)$ that represent a competitive equilibrium.
\end{theorem}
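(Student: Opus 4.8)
The plan is to show that the primal--dual optimal pair $(s,d,f,c,g,\pi)$ satisfies the defining conditions of a competitive equilibrium: (i) each stakeholder's allocation solves its own individual profit-maximization problem taking the clearing prices $\pi$ as given; and (ii) the market clears, i.e., the nodal product balances \eqref{Balance} hold. Condition (ii) is immediate because $(s,d,f,c,g)$ is primal feasible, so the real work is establishing (i) for each stakeholder class.

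First I would set up, for a representative stakeholder, its individual optimization problem: for a supplier $i$, $\max_{0\le s_i\le \overline s_i}\,\phi^s_i(\pi_i,\alpha^s_i,s_i)=(\pi_i-\alpha^s_i)s_i$, and analogously for consumers ($\max_{0\le d_j\le\overline d_j}(\alpha^d_j-\pi_j)d_j$), transport providers, and technology providers, with the prices $\pi$ fixed at their clearing values. Each of these is a trivial one-dimensional LP whose optimal solution is characterized by a sign condition on the profit coefficient together with complementary slackness against the capacity bound: $s_i=\overline s_i$ if $\pi_i>\alpha^s_i$, $s_i=0$ if $\pi_i<\alpha^s_i$, and $s_i$ anywhere in $[0,\overline s_i]$ if $\pi_i=\alpha^s_i$. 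The key step is then to invoke the KKT (complementary slackness) conditions of the market clearing LP \eqref{Primal}, using the dual constraints \eqref{DualConstraintPrices} and the nonnegativity of the multipliers $\overline\lambda$, to verify that the clearing allocation obeys exactly these per-stakeholder optimality conditions. For a supplier: stationarity of the LP gives $\pi_i-\alpha^s_i=\overline\lambda_i\ge 0$, complementarity $\overline\lambda_i(\overline s_i-s_i)=0$ forces $s_i=\overline s_i$ whenever $\pi_i>\alpha^s_i$, and the lower-bound multiplier handles the case $s_i=0$ when the stationarity residual is strict the other way; this is precisely the solution of supplier $i$'s individual problem. The same argument, using \eqref{dualCon_dIdent}--\eqref{dualCon_xIdent} and the matching complementarity relations, disposes of consumers, transporters, and technologies; the price identities \eqref{NodalPrices} are what let the nodal balance multipliers $\pi^{bal}_{n,p}$ be read as the stakeholder-level prices $\pi_i,\pi_j,\pi_l,\pi_t$ appearing in the individual profit functions.

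Assembling these, every stakeholder is simultaneously at an individual profit optimum given $\pi$, and the clearing constraints \eqref{Balance} are satisfied, which is the definition of a competitive equilibrium; nonnegativity of all equilibrium profits is already guaranteed by Theorem~\ref{ThmProfit}. The main obstacle I anticipate is purely bookkeeping rather than conceptual: one must be careful that the \emph{reduced} profit problems correspond to the condensed formulation \eqref{PrimalCondensed}--\eqref{LPDualCondensed} (where $c_t,g_t$ are collapsed into $\xi_t$ via the yield coefficients $\gamma_{t,p}$), so that the technology dual constraint \eqref{dualCon_xIdent} genuinely encodes $\pi_t\ge\alpha^\xi_t$ with $\pi_t$ defined as the yield-weighted price spread in \eqref{pi_t}; and one must confirm that the transport price $\pi_l=\pi^{bal}_{n'(l),p(l)}-\pi^{bal}_{n(l),p(l)}$ emerging from the balance multipliers is exactly the object whose sign drives the individual transport decision. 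Once the translation between the node-indexed LP multipliers and the stakeholder-indexed prices is pinned down via \eqref{Identities}, the equilibrium verification for each class is a one-line complementary-slackness check.
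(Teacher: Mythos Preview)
Your proposal is correct and rests on the same underlying idea as the paper (LP duality), but the execution differs in emphasis. The paper's proof is a two-line argument that leans on the Lagrangian representation already derived in \eqref{FinalLagrangian}: since $\mathcal{L}$ is the negative sum of stakeholder profits and is fully separable across stakeholders, minimizing it for fixed $\pi$ is equivalent to each stakeholder independently maximizing its own profit; strong duality then guarantees that the resulting allocations also solve the primal and hence satisfy the clearing constraints \eqref{Balance}. You instead go bottom-up, writing out each stakeholder's one-dimensional box-constrained profit problem and verifying its optimality via the KKT/complementary-slackness conditions of the full LP, using the dual constraints \eqref{DualConstraintPrices}. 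This is exactly the componentwise content of the paper's separability-plus-strong-duality argument, just made explicit case by case. Your route is longer but more transparent about \emph{why} each allocation is individually optimal (the sign-of-coefficient analysis you sketch); the paper's route is shorter because the heavy lifting was already done in deriving \eqref{FinalLagrangian}. Neither approach requires the bookkeeping caveats you flag as obstacles---the price identities \eqref{NodalPrices} and \eqref{Identities} were set up precisely so that the translation from nodal multipliers to stakeholder prices is automatic.
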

\begin{proof}
	Let us choose an arbitrary set of prices $\pi$; minimizing the Lagrange function subject to these fixed $\pi$ produces allocations $(s,d,f,c,g)$ which maximize stakeholder profits independently. The market clearing program is linear, therefore strong duality holds and the allocations produced by solving the Lagrangian also solve the primal market clearing problem and satisfies the associated market clearing constraints.
\end{proof}

The above property indicates that coordination does not imply lack of competition and that the coordination enabled by the bidding  process to the ISO does not interfere with the competitive nature of the market. 
\begin{theorem}
	\label{RevAqc}
	The market delivers prices $\pi$ and allocations $(s,d,f,x)$ that lead to revenue adequacy:
	\begin{equation*}
	\begin{aligned}
	\sum_{j\in\mathcal{D}}{\pi_{j}d_{j}}=\sum_{i\in\mathcal{S}}{\pi_{i}s_{i}}+\sum_{l\in\mathcal{L}}{\pi_{l}f_{l}}+\sum_{t\in\mathcal{T}}{\pi_{t}c_{t}}
	\end{aligned}
	\end{equation*}
\end{theorem}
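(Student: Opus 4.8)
The plan is to obtain revenue adequacy directly from the nodal product balances \eqref{Balance} together with the price identities \eqref{Identities}. First I would take the balance constraint \eqref{Balance} --- which holds with equality at any feasible (in particular, the optimal clearing) primal solution --- multiply both sides by the nodal price $\pi^{bal}_{n,p}$, and sum over all $(n,p)\in\mathcal{N}\times\mathcal{P}$. Since the two sides agree before the multiplication, they still agree afterward, so this produces a single scalar identity relating the price-weighted supply, demand, transport, and technology flows.

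Next I would convert each price-weighted aggregate into stakeholder-indexed form. The supply and demand terms collapse via \eqref{SupRel}--\eqref{DemRel} and the first two identities of \eqref{Identities} into $\sum_{i\in\mathcal{S}}\pi_i s_i$ and $\sum_{j\in\mathcal{D}}\pi_j d_j$. Collecting the inbound-minus-outbound transport contributions and applying the third identity of \eqref{Identities} yields $\sum_{l\in\mathcal{L}}\pi_l f_l$, with the telescoping of $\pi^{bal}$ between receiving and source nodes reproducing exactly the definition of $\pi_l$ in \eqref{pi_f}. Collecting the generated-minus-consumed technology contributions, substituting the conversion relation \eqref{Conversion} to express generation in terms of reference-product consumption, and applying the fourth identity of \eqref{Identities} yields $\sum_{t\in\mathcal{T}}\pi_t c_t$ with $\pi_t$ as in \eqref{pi_t}. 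Rearranging the resulting equation so that demand sits alone on one side gives precisely the claimed identity.

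An equivalent and shorter route goes through \eqref{FinalLagrangian}: at a feasible primal point every equality-constraint residual in \eqref{CMLagrangian} vanishes, so the Lagrangian equals the negative of the primal objective \eqref{ObjCondensed}; but \eqref{FinalLagrangian} also expresses it as the negative of the sum of stakeholder profits \eqref{Profits}. Equating these two expressions and cancelling the common bid-cost ($\alpha$) terms on both sides leaves $\sum_{j}\pi_j d_j - \sum_{i}\pi_i s_i - \sum_{l}\pi_l f_l - \sum_{t}\pi_t c_t = 0$, which is revenue adequacy.

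There is no deep obstacle here --- the statement is fundamentally a conservation/accounting identity. The only step requiring care is the bookkeeping in the second paragraph: keeping the signs straight when separating inbound from outbound transport and generation from consumption, and invoking the yield-coefficient substitution from \eqref{Conversion} so that the technology term emerges in terms of the reference product $\bar p(t)$, exactly as in \eqref{FinalLagrangian}. I would also note explicitly that the argument uses only primal feasibility of the clearing solution, not optimality, so the identity in fact holds for every feasible allocation--price pair consistent with the balances.
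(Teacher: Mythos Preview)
Your primary approach---multiply the clearing balance \eqref{Balance} by $\pi^{bal}_{n,p}$, sum over $(n,p)$, and invoke the price identities \eqref{Identities} together with the conversion relation \eqref{Conversion}---is exactly the paper's proof, and your observation that only primal feasibility (not optimality) is needed is correct. Your alternative Lagrangian route via \eqref{FinalLagrangian} is also valid and is not used explicitly in the paper's proof of this theorem, but it is simply a repackaging of the same computation already carried out in deriving \eqref{FinalLagrangian}.
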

\begin{proof}
	Consider an optimal set of allocations $(s,d,f,c,g)$ for the market clearing problem; the clearing constraints hold at optimality, and we have from \eqref{Balance}:
	\begin{equation*}
	\begin{aligned}
	s_{n,p} + \sum_{l\in\mathcal{L}^{in}_{n}}{f_{l}} +
	\sum_{t\in\mathcal{T}_{n,p}^{gen}}{g_{t}} - d_{n,p} - \sum_{l\in\mathcal{L}^{out}_{n}}{f_{l}} - \sum_{t\in\mathcal{T}_{n,p}^{con}}{c_{t}}=0,
	\end{aligned}
	\end{equation*}
	Combining this result with the identities defining market prices in \eqref{Identities}, we express $\pi^{bal}_{n,p}$ in terms of prices $\pi_{i}$, $\pi_{j}$, $\pi_{l}$, and $\pi_{t}$, and also rewrite technology inputs and outputs in terms of reference products $\bar{p}(t)$. Altogether, this gives:
	\begin{equation*}
	\begin{aligned}
	\sum_{i\in\mathcal{S}}{\pi_{i}s_{i}}+\sum_{l\in\mathcal{L}}{\pi_{l}f_{l}}+\sum_{t\in\mathcal{T}}{\pi_{t}c_{t}}-\sum_{j\in\mathcal{D}}{\pi_{j}d_{j}}=0
	\end{aligned}
	\end{equation*}
	This recapitulates the revenue adequacy definition; it follows that revenue collected from consumers is sufficient to make payments to stakeholders.
\end{proof}

Revenue adequacy implies that financial payments provided by consumers are sufficient to cover the financial payments made to suppliers, transportation providers, and technology providers (there are no money losses in the market).
\begin{theorem}
	\label{ThmPrice}
	The market clearing prices satisfy the bounds: $\pi_{i}\geq\alpha^{S}_{i}$ for all $i\in\mathcal{S^*}$, $\pi_{j}\leq\alpha^{D}_{j}$ for all $j\in\mathcal{D^*}$, $\pi_{l}\geq\alpha^{f}_{l}$ for all $l\in\mathcal{L^*}$, and $\pi_{t}\geq\alpha^{c}_{t}$ for all $t\in\mathcal{T^*}$, where $\mathcal{S^*}$, $\mathcal{D^*}$, $\mathcal{T^*}$, and $\mathcal{L^*}$ indicate cleared transactions.
\end{theorem}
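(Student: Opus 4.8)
The plan is to read the price bounds directly off the dual constraints in compact form \eqref{DualConstraintPrices} together with complementary slackness. The key observation is that the starred sets $\mathcal{S}^*,\mathcal{D}^*,\mathcal{L}^*,\mathcal{T}^*$ are precisely the stakeholders whose primal allocation is strictly positive (a ``cleared'' transaction means $s_i>0$, $d_j>0$, $f_l>0$, or $\xi_t>0$, equivalently $c_t>0$). So I would first recall that the condensed primal \eqref{PrimalCondensed} and dual \eqref{LPDualCondensed} are a primal--dual LP pair, hence at optimality complementary slackness holds between each primal variable and its reduced-cost/dual-feasibility inequality.

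Next I would carry out the four cases, which are essentially identical. Take a supplier $i\in\mathcal{S}^*$, so $s_i>0$. The dual feasibility constraint \eqref{dualCon_sIdent} reads $\pi_i-\overline{\lambda}_i\geq\alpha_i^s$, and I already noted in the text that this gives $\pi_i-\alpha_i^s\geq\overline{\lambda}_i\geq0$, i.e. $\pi_i\geq\alpha_i^s$ unconditionally; for cleared suppliers the bound is just this. (One could additionally invoke complementary slackness on the capacity constraint \eqref{SmaxCondensed}: if $s_i<\overline{s}_i$ then $\overline{\lambda}_i=0$ and $\pi_i=\alpha_i^s$ exactly whenever \eqref{dualCon_sIdent} is tight, which is forced by complementary slackness with $s_i>0$ acting on the stationarity condition — so cleared, non-capacity-binding suppliers are paid exactly their bid.) The consumer case uses \eqref{dualCon_dIdent}: $\pi_j+\overline{\lambda}_j\leq\alpha_j^d$ with $\overline{\lambda}_j\geq0$ immediately yields $\pi_j\leq\alpha_j^d$ for every $j$, in particular for $j\in\mathcal{D}^*$. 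The transport case is \eqref{dualCon_fIdent}, $\pi_l-\overline{\lambda}_l\geq\alpha_l^f$, giving $\pi_l\geq\alpha_l^f$; and the technology case is \eqref{dualCon_xIdent} with $\pi_t=\sum_{p\in\mathcal{P}_t^{gen}}\gamma_{t,p}\pi^{bal}_{n(t),p}-\sum_{p\in\mathcal{P}_t^{con}}\gamma_{t,p}\pi^{bal}_{n(t),p}$ (definition \eqref{pi_t}), giving $\pi_t\geq\alpha_t^\xi=\alpha_t^c$. In each case I would note that, via the profit definitions \eqref{Profits}, these inequalities are equivalent to $\phi\geq0$ on cleared transactions, tying the statement back to Theorem~\ref{ThmProfit}.

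Honestly, there is no serious obstacle here: the theorem is an immediate corollary of dual feasibility of \eqref{LPDualCondensed}, since the $\overline{\lambda}$'s are sign-restricted to $\mathbb{R}_+$ and enter each constraint with the ``helpful'' sign. The only mild subtlety — and the one place I would be careful — is bookkeeping the reference-product bid $\alpha_t^c$ versus the condensed bid $\alpha_t^\xi$ and confirming $\pi_t$ in \eqref{DualConstraintPrices} is exactly the technology price of \eqref{pi_t}; this is just the yield-coefficient rewriting already used to derive \eqref{Identities}. A second point worth a sentence is why the bounds are stated only on the starred sets: for non-cleared stakeholders the allocation is zero, the corresponding profit is zero regardless of the price gap, and in fact the dual inequalities can be slack (a non-participating consumer may face $\pi_j>\alpha_j^d$), so restricting to cleared transactions is what makes the price-gap direction meaningful. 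I would close by remarking that strong duality (invoked already after \eqref{CMLagrangian}) guarantees such an optimal dual vector exists, so the bounds are attained by the clearing prices the ISO actually reports.
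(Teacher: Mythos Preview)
Your argument is correct but takes a different route from the paper. The paper's proof is a two-line appeal to Theorem~\ref{ThmProfit}: profits are nonnegative, cleared stakeholders have strictly positive allocations, hence the factor $(\pi-\alpha)$ in each profit expression \eqref{Profits} must carry the stated sign. You instead read the bounds directly off the dual-feasibility constraints \eqref{DualConstraintPrices} using only $\overline{\lambda}\geq 0$. This is precisely the observation the paper records informally in the paragraph following \eqref{DualConstraintPrices}, so your route is the more elementary one and, as you note, delivers the bounds for \emph{all} stakeholders rather than only the starred sets. What the paper's route buys is independence from the explicit dual formulation: it rests solely on the Lagrangian profit argument of Theorem~\ref{ThmProfit}, and so is insensitive to how the dual inequalities are written or signed.

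One internal inconsistency to clean up: having just argued from \eqref{dualCon_dIdent} that $\pi_j\leq\alpha_j^d$ holds unconditionally, you then say a non-participating consumer ``may face $\pi_j>\alpha_j^d$.'' These two statements cannot both be true. If you take \eqref{DualConstraintPrices} at face value, the bounds hold for every stakeholder and your closing discussion of why the theorem restricts to starred sets is moot. If instead you have in mind the standard LP dual (where the feasibility inequalities point the other way and it is complementary slackness on the positive primal variable that forces tightness for cleared players), then your ``unconditional'' claim is wrong and the complementary-slackness step you relegate to a parenthetical is actually carrying the proof. Either reading yields a valid argument, but you should pick one and make the logic consistent.
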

\begin{proof}
	Theorem~\ref{ThmProfit} establishes that profits $(\phi^{s},\phi^{d},\phi^{f},\phi^{c})$ are nonnegative. The allocations $(s,d,f,c,g)$ for all stakeholders are nonnegative as well, and strictly positive for cleared stakeholders. This implies that $\pi_{i}-\alpha^{s}_{i}\geq0$ for all $i\in\mathcal{S}^{*}$, $\pi_{j}-\alpha^{d}_{j}\geq0$ for all $j\in\mathcal{D}^{*}$, $\pi_{l}-\alpha^{f}_{l}\geq0$ for all $l\in\mathcal{L}^{*}$, and $\pi_{t}-\alpha^{c}_{t}\geq0$ for all $t\in\mathcal{T}^{*}$.
\end{proof}
We can thus see that the price bounding properties emerge from both dual and Lagrangian dual analysis, and have provided interpretations of price bounding properties as economic driving forces. We summarize by noting that these driving forces capture information about spatial and product value differentials and embed this information into the ISO clearing problem. Revenue adequacy also implies price behavior within the coordinated market model relative to stakeholder bid costs (because prices are bounded by bid costs). Specifically, revenue adequacy provides means of obtaining the bounding properties inferred from the dual program.

We observe that the market clearing framework is consistent under negative bid prices. We are interested in the specific behavior of a couple of such instances: negative consumer bids $\alpha^{d}_{j}<0$ and negative supplier bids $\alpha^{s}_{i}<0$. By the price bounding properties in Theorem~\ref{ThmPrice}, such bids allow the existence of corresponding prices $\pi_{j}<0$ and $\pi_{i}<0$. The physical interpretation of a consumer $j$ offering $\alpha^{d}_{j}<0$ is that it is requesting (rather than offering) payment to receive a product. Likewise, a supplier $i$ offering $\alpha^{s}_{i}<0$ offers payment to have a product removed. The ISO may thus collect revenue from both consumers and suppliers under appropriate bid values. We define the sets $\mathcal{S}^{-}:=\{i\in\mathcal{S}|\alpha^{s}_{i}<0\}$ and $\mathcal{S}^{+}:=\{i\in\mathcal{S}|\alpha^{s}_{i}\geq0\}$ for suppliers and $\mathcal{D}^{-}:=\{j\in\mathcal{D}|\alpha^{d}_{j}<0\}$ and $\mathcal{D}^{+}:=\{j\in\mathcal{D}|\alpha^{d}_{j}\geq0\}$ for consumers. Under this notation, the ISO can collect revenue from stakeholders $i\in\mathcal{S}^{-}$ and $j\in\mathcal{D}^{+}$. Using this bounding information to inform revenue collection in markets we can expand the definition of revenue adequacy.
\begin{corollary}
	The ISO can collect revenue from stakeholders $i\in\mathcal{S}^{-}$ and $j\in\mathcal{D}^{+}$ to achieve revenue adequacy.
\end{corollary}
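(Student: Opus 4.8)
The plan is to obtain the statement directly from the revenue adequacy identity of Theorem~\ref{RevAqc} by partitioning the supplier and consumer index sets according to bid sign and regrouping the terms into money flowing \emph{into} the ISO and money flowing \emph{out of} it.

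First I would take the revenue adequacy identity
\[
\sum_{j\in\mathcal{D}}\pi_{j}d_{j}=\sum_{i\in\mathcal{S}}\pi_{i}s_{i}+\sum_{l\in\mathcal{L}}\pi_{l}f_{l}+\sum_{t\in\mathcal{T}}\pi_{t}c_{t},
\]
split $\mathcal{S}=\mathcal{S}^{+}\cup\mathcal{S}^{-}$ and $\mathcal{D}=\mathcal{D}^{+}\cup\mathcal{D}^{-}$, and move the $\mathcal{S}^{-}$ contribution to the left and the $\mathcal{D}^{-}$ contribution to the right to obtain
\[
\sum_{j\in\mathcal{D}^{+}}\pi_{j}d_{j}-\sum_{i\in\mathcal{S}^{-}}\pi_{i}s_{i}
=\sum_{i\in\mathcal{S}^{+}}\pi_{i}s_{i}+\sum_{l\in\mathcal{L}}\pi_{l}f_{l}+\sum_{t\in\mathcal{T}}\pi_{t}c_{t}-\sum_{j\in\mathcal{D}^{-}}\pi_{j}d_{j}.
\]
The left-hand side is the net amount the ISO collects from the consumers that pay for product ($\mathcal{D}^{+}$) together with the suppliers that pay to have product removed ($\mathcal{S}^{-}$); the right-hand side is the net amount the ISO disburses to the ordinary suppliers ($\mathcal{S}^{+}$), to the transport and technology providers, and to the consumers that must be paid to take product ($\mathcal{D}^{-}$).

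Next I would argue that the right-hand side is nonnegative, so this regrouping genuinely exhibits a well-defined revenue stream. Any stakeholder that does not clear contributes a vanishing allocation ($s_{i}=0$, $d_{j}=0$, $f_{l}=0$, or $c_{t}=0$) and drops out of every sum. For the cleared stakeholders, Theorem~\ref{ThmPrice} gives $\pi_{i}\ge\alpha^{s}_{i}\ge 0$ for $i\in\mathcal{S}^{+}$, $\pi_{l}\ge\alpha^{f}_{l}\ge 0$, $\pi_{t}\ge\alpha^{c}_{t}\ge 0$, and $\pi_{j}\le\alpha^{d}_{j}<0$ for $j\in\mathcal{D}^{-}$; since all allocations are nonnegative, each of $\pi_{i}s_{i}$, $\pi_{l}f_{l}$, $\pi_{t}c_{t}$, and $-\pi_{j}d_{j}$ is nonnegative. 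Hence the total payment the ISO must make is nonnegative and is financed exactly by what it collects from $\mathcal{S}^{-}$ and $\mathcal{D}^{+}$, which is precisely the expanded revenue adequacy claim.

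I do not expect a substantive obstacle: the corollary is Theorem~\ref{RevAqc} viewed through a different bracketing of the same terms. The only care required is the sign bookkeeping — separating the terms that represent inflows to the ISO from those that represent outflows — together with the observation that the price bounds of Theorem~\ref{ThmPrice} are needed only for cleared stakeholders, which suffices because uncleared stakeholders contribute nothing.
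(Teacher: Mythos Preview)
Your proposal is correct and follows essentially the same route as the paper: both start from the revenue adequacy identity of Theorem~\ref{RevAqc} and partition the supplier and consumer sums into $\mathcal{S}^{\pm}$ and $\mathcal{D}^{\pm}$. The paper's proof stops at writing down the partitioned balance equation, whereas you additionally rearrange inflows versus outflows and invoke Theorem~\ref{ThmPrice} to check that the outflow side is nonnegative; this extra step is sound and sharpens the interpretation, but it is not needed for the bare statement.
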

\begin{proof}
	Follows from the proof of Theorem~\ref{RevAqc} with suppliers and consumers grouped into the subsets $\mathcal{S}^{+}$, $\mathcal{S}^{-}$, $\mathcal{D}^{+}$, and $\mathcal{D}^{-}$:
	\begin{equation*}
	\begin{aligned}
	\sum_{i\in\mathcal{S}^{+}}{\pi_{i}s_{i}}+\sum_{i\in\mathcal{S}^{-}}{\pi_{i}s_{i}}+\sum_{l\in\mathcal{L}}{\pi_{l}f_{l}}+\sum_{t\in\mathcal{T}}{\pi_{t}c_{t}}-\sum_{j\in\mathcal{D}^{+}}{\pi_{j}d_{j}}-\sum_{j\in\mathcal{D}^{-}}{\pi_{j}d_{j}}=0
	\end{aligned}
	\end{equation*}
\end{proof}

Another important (non-obvious) outcome of the clearing procedure is that it will not induce inefficient transportation cycles (as long as the transport bid costs are non-negative).
\begin{theorem}
	\label{ThmNoCycles}
	If the transportation provider bids satisfy $\alpha^{f}_{l}>0$ for all $l\in\mathcal{L}$ then no cleared allocation contains a transportation cycle.
\end{theorem}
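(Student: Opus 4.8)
The plan is to argue by contradiction, combining the price‑bounding property already established for cleared transport links with a conservation (telescoping) identity for nodal prices around the cycle. First I would fix the notion of a \emph{transportation cycle}: a finite collection of cleared transport links $l_1,\dots,l_k\in\mathcal{L}$ (so $f_{l_i}>0$ for each $i$) that all move a common product $p$ and that form a directed closed walk in the node graph, i.e.\ $n_r(l_i)=n_s(l_{i+1})$ for $i=1,\dots,k-1$ and $n_r(l_k)=n_s(l_1)$. The restriction to a single product is what makes the loop meaningful, since transport links do not transform products, and it is precisely what the telescoping step will use.

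The key ingredient is that each link on such a cycle carries a strictly positive transport price. Because the links are cleared, Theorem~\ref{ThmPrice} gives $\pi_{l_i}\ge\alpha^{f}_{l_i}$ for each $i$ (equivalently, this follows from nonnegativity of the profit $\phi^{f}_{l_i}=(\pi_{l_i}-\alpha^{f}_{l_i})f_{l_i}$ in Theorem~\ref{ThmProfit} together with $f_{l_i}>0$); with the hypothesis $\alpha^{f}_{l_i}>0$ this gives $\pi_{l_i}>0$. Recalling $\pi_{l_i}=\pi^{bal}_{n_r(l_i),p}-\pi^{bal}_{n_s(l_i),p}$ and summing over the cycle, the nodal prices cancel in pairs because the walk is closed and every link shares the product $p$: $\sum_{i=1}^{k}\pi_{l_i}=\sum_{i=1}^{k}(\pi^{bal}_{n_r(l_i),p}-\pi^{bal}_{n_s(l_i),p})=0$. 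But at the same time $\sum_{i=1}^{k}\pi_{l_i}\ge\sum_{i=1}^{k}\alpha^{f}_{l_i}>0$, a contradiction. Hence no cleared allocation contains a transportation cycle.

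I expect the real work to be bookkeeping rather than conceptual: one must state the definition of transportation cycle precisely enough for the telescoping to be valid — pinning down that all arcs carry the same product and that the orientation used in $\pi_{l}=\pi^{bal}_{n_r(l),p}-\pi^{bal}_{n_s(l),p}$ agrees with the orientation of the walk — and keep sign conventions consistent with the dual constraint \eqref{dualCon_fIdent}. A degenerate self‑loop $n_s(l)=n_r(l)$ is covered by the same computation (the case $k=1$ reads $0\ge\alpha^{f}_{l}>0$).

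As an alternative that avoids duality entirely, one can give a purely primal perturbation argument. Given a cleared (optimal) allocation containing such a cycle, put $\delta=\min_i f_{l_i}>0$ and decrease every $f_{l_i}$ by $\delta$. Each node on the cycle loses exactly $\delta$ units of both inflow and outflow of product $p$, so all nodal balances \eqref{BalanceCondensed} still hold; all flows remain nonnegative and within their capacities; and the objective strictly increases by $\delta\sum_{i}\alpha^{f}_{l_i}>0$, contradicting optimality. I would present the price‑based version in the main text, since it reuses the economic driving‑force language already developed, and mention the primal perturbation as a remark.
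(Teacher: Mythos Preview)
Your proof is correct and is essentially the same argument as the paper's: both invoke Theorem~\ref{ThmPrice} to get $\pi_{l}>0$ on every cleared link of the cycle and then derive a contradiction from the nodal prices around the closed walk---the paper writes this as the impossible chain $\pi_{n_{c},p}>\cdots>\pi_{n_{1},p}>\pi_{n_{c},p}$, while you package the same thing as a telescoping sum $\sum_i\pi_{l_i}=0$ versus $\sum_i\pi_{l_i}>0$. Your primal perturbation alternative is a nice addition not present in the paper.
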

\begin{proof}
	Consider a transportation cycle consisting of $c$ nodes in which a product $p$ is moved is moved from node to node in the cycle from $n_{1}\rightarrow n_{2}\rightarrow...\rightarrow n_{c-1}\rightarrow n_{c}\rightarrow n_{1}$. Theorem~\ref{ThmPrice} requires that $\pi_{l}\geq\alpha^{f}_{l}>0$ for the cleared transportation providers $l\in\mathcal{L}^{*}$. Following from this requirement, we have the pricing relationship around the cycle $\pi_{n_{c},p}>\pi_{n_{c-1},p}>...>\pi_{n_{2},p}>\pi_{n_{1},p}>\pi_{n_{c},p}$. The pricing requirements upon the cycle represent a logical contradiction.
\end{proof}

This property provides efficiency guarantees about the physical behavior of the allocations made by the ISO. Coordinated markets capture complex interdependencies between stakeholders and having this information available to the ISO avoids inefficient outcomes. In uncoordinated settings, particularly where transactions are conducted between individual stakeholders,  no such guarantees of efficiency exist. The result makes market coordination especially attractive where transportation costs represent a significant portion of total market value (e.g., waste collection).

\section{Enforcing Stakeholder Participation in Markets}\label{sec: forced markets}

Coordinated markets are based on voluntary stakeholder participation driven by natural economic incentives. The ISO bases its allocations on these incentives to deliver competitive results that generate maximum profit for stakeholders. It is important that the ISO does not assume the participation of any stakeholder when creating its allocations. In other words, from a policy standpoint, the ISO should be unable to force a stakeholder to participate in a market. From a modeling perspective, this means stakeholder allocations cannot be forced to specific values; doing so would imply that the ISO can compel stakeholder participation in the market. On the other hand, there are situations in which there are no incentives for a market to exist (e.g., waste markets) and thus one must determine appropriate mechanisms to activate the market.  Intuitively, this can be done by forcing participation (e.g., force provision of waste management services). In this section we show that forcing stakeholder participation destroys economic properties of a coordinated market. These results are important because traditional SC formulations enforce provision of services (e.g., satisfy demands or force supply). While intuitive from a primal perspective, such constructs can generate financially inefficient outcomes that alter stakeholder incentive). We use these results to argue that promoting participation in SCs should be attained by using economic incentives that preserve the properties of the market.  In the next section, we provide procedures to determine suitable incentives that promote participation (that activate markets). 

We define constraints of the type in \eqref{ForcingConstraints}, where $\underline{s}_{i}\in\mathbb{R}_{+}$, $\underline{d}_{j}\in\mathbb{R}_{+}$, $\underline{f}_{l}\in\mathbb{R}_{+}$,  and $\underline{c}_{t}\in\mathbb{R}_{+}$ represent minimum enforced allocations assumed by the ISO. For each enforcing participation constraint, we also define a dual variable $\underline{\lambda}_{i}\in\mathbb{R}_{+}$, $\underline{\lambda}_{j}\in\mathbb{R}_{+}$, $\underline{\lambda}_{l}\in\mathbb{R}_{+}$,  and $\underline{\lambda}_{t}\in\mathbb{R}_{+}$.
\begin{subequations}
\label{ForcingConstraints}
\begin{align}
s_{i} &\geq \underline{s}_{i},\quad i\in\mathcal{S},\;\; (\underline{\lambda}_{i})\\
d_{j} &\geq \underline{d}_{j},\quad j\in\mathcal{D},\;\; (\underline{\lambda}_{j})\\
f_{l} &\geq \underline{f}_{l},\quad l\in\mathcal{L},\;\; (\underline{\lambda}_{l})\\
c_{t} &\geq \underline{c}_{t},\quad t\in\mathcal{T},\;\; (\underline{\lambda}_{t})
\end{align}
\end{subequations}

Including any of the constraints of the type in \eqref{ForcingConstraints} in the clearing model will destroy some key economic properties but will not affect others. We first examine the nature of these constraints and their impacts on the clearing outcomes through the Lagrangian dual formulation, which will reveal that the profit-seeking nature of the coordinated market is unaffected by the forced participation constraints. Beginning from the Lagrangian function in \eqref{FinalLagrangian} we introduce the forcing constraints in \eqref{LagrangianForced}.

\begin{equation}
\label{LagrangianForced}
\begin{aligned}
\mathcal{L}(s,d,f,c,g,\pi)=-\left(\sum_{j \in \mathcal{D}}{\phi_{j}^{d}} + \sum_{i \in \mathcal{S}}{\phi_{i}^{s}} + \sum_{l\in\mathcal{L}}{\phi^{f}_{l}} + \sum_{t\in\mathcal{T}}{\phi^{c}_{t}}\right)\\
+\sum_{j \in \mathcal{D}}(\underline{d}_{j}-d_{j})\underline{\lambda}_{j} + \sum_{i \in \mathcal{S}}(\underline{s}_{i}-s_{i})\underline{\lambda}_{i} + \sum_{l \in \mathcal{L}}(\underline{f}_{l}-f_{l})\underline{\lambda}_{l} + \sum_{t \in \mathcal{T}}(\underline{c}_{t}-c_{t})\underline{\lambda}_{t}
\end{aligned}
\end{equation}

The terms associated with the forced participation constraints are such that either the constraint is active and has zero slack (i.e., $(\underline{s}_{i}-s_{i})=0$, for suppliers) or the constraint is inactive and the associated dual ($\underline{\lambda}_{i}$, for suppliers) is zero. In either case, the associated terms in \eqref{LagrangianForced} will evaluate to zero, so the profit-seeking property is preserved.

To illustrate the economic properties of the market clearing problem with forced participation, we once again turn to the dual formulation in \eqref{LPDualForced}. Here, the dual variables of forced participation constraints appear in the both the objective function and the constraints.
\begin{subequations}\label{LPDualForced}
\begin{equation}
\label{dualObjCondensedForced}
\begin{aligned}
\min_{\pi,\lambda}\; 
\sum_{i \in \mathcal{S}}{\overline{s}_{i}\overline{\lambda}_{i}} +
\sum_{j \in \mathcal{D}}{\overline{d}_{j}\overline{\lambda}_{j}} +
\sum_{l \in \mathcal{L}}{\overline{f}_{l}\overline{\lambda}_{l}} +
\sum_{t\in\mathcal{T}}{\overline{\xi}_{t}\overline{\lambda}_{t}}\\
-\sum_{i \in \mathcal{S}}{\underline{s}_{i}\underline{\lambda}_{i}} -
\sum_{j \in \mathcal{D}}{\underline{d}_{j}\underline{\lambda}_{j}} -
\sum_{l\in\mathcal{L}}{\underline{f}_{l}\underline{\lambda}_{l}} -
\sum_{t\in\mathcal{T}}{\underline{\xi}_{t}\underline{\lambda}_{t}}
\end{aligned}
\end{equation}
\begin{equation}
\label{dualCon_sCondensedForced}
\begin{aligned}
\pi^{bal}_{n(i),p(i)} - \overline{\lambda}_{i} + \underline{\lambda}_{i}\geq \alpha_{i}^{s},\; i\in\mathcal{S}
\end{aligned}
\end{equation}
\begin{equation}
\label{dualCon_dCondensedForced}
\begin{aligned}
\pi^{bal}_{n(j),p(j)} + \overline{\lambda}_{j}  - \underline{\lambda}_{j}\leq \alpha_{j}^{d},\; j\in\mathcal{D}
\end{aligned}
\end{equation}
\begin{equation}
\label{dualCon_fCondensedForced}
\begin{aligned}
\pi^{bal}_{n(l),p(l)} - \pi^{bal}_{n'(l),p(l)} - \overline{\lambda}_{l}  + \underline{\lambda}_{l}\geq \alpha^{f}_{l},\; (l)\in\mathcal{L}
\end{aligned}
\end{equation}
\begin{equation}
\label{dualCon_xCondensedForced}
\begin{aligned}
\sum_{p\in\mathcal{P}^{gen}_{t}}{\gamma_{t,p}\pi^{bal}_{n(t),p(t)}} - \sum_{p\in\mathcal{P}^{con}_{t}}{\gamma_{t,p}\pi^{bal}_{n(t),p(t)}} - \overline{\lambda}_{t}  + \underline{\lambda}_{t}\geq \alpha^{\xi}_{t},\; t\in\mathcal{T}
\end{aligned}
\end{equation}
\end{subequations}

We use the price identities to rewrite the dual constraints \eqref{dualCon_sCondensedForced} to \eqref{dualCon_xCondensedForced} (in terms of stakeholder prices $(\pi_{i},\pi_{j},\pi_{l},\pi_{t})$) and this results in \eqref{DualPricesForced}. Importantly, in all of these constraints, we observe that the signs corresponding to the dual variables $(\underline{\lambda}_{i},\underline{\lambda}_{j},\underline{\lambda}_{l},\underline{\lambda}_{t})$ are such that each relaxes the bound associated with the corresponding stakeholder bid $(\alpha^{s}_{i},\alpha^{d}_{j},\alpha^{f}_{l},\alpha^{\xi}_{t})$.
\begin{subequations}\label{DualPricesForced}
\begin{equation}
\label{dualCon_sIdentForced}
\begin{aligned}
\pi_{i} - \overline{\lambda}_{i} + \underline{\lambda}_{i}\geq \alpha_{i}^{s},\; i\in\mathcal{S}
\end{aligned}
\end{equation}
\begin{equation}
\label{dualCon_dIdentForced}
\begin{aligned}
\pi_{j} + \overline{\lambda}_{j} - \underline{\lambda}_{j}\leq \alpha_{j}^{d},\; j\in\mathcal{D}
\end{aligned}
\end{equation}
\begin{equation}
\label{dualCon_fIdentForced}
\begin{aligned}
\pi_{l} - \overline{\lambda}_{l} + \underline{\lambda}_{l}\geq \alpha^{f}_{l},\; l\in\mathcal{L}
\end{aligned}
\end{equation}
\begin{equation}
\label{dualCon_xIdentForced}
\begin{aligned}
\pi_{t} - \overline{\lambda}_{t} + \underline{\lambda}_{t}\geq \alpha^{\xi}_{t},\; t\in\mathcal{T}
\end{aligned}
\end{equation}
\end{subequations}

Strong duality implies that when the forced participation constraints are active, the corresponding duals $(\underline{\lambda}_{i},\underline{\lambda}_{j},\underline{\lambda}_{l},\underline{\lambda}_{t})$ will be non-negative. Furthermore, let us assume that $\overline{s}_{i}>\underline{s}_{i}$ so that $\overline{\lambda}_{i}=0$ and let the corresponding relationships apply such that $\overline{\lambda}_{j}$, $\overline{\lambda}_{l}$, and $\overline{\lambda}_{t}$ are also zero under active forced participation (i.e., when the forcing constraints are active and the upper bounding constraints are inactive). When the forced participation constraints are active, the duals $(\underline{\lambda}_{i},\underline{\lambda}_{j},\underline{\lambda}_{l},\underline{\lambda}_{t})$ allow stakeholder bid bounds to be violated according to $(\pi_{i}-\alpha^{s}_{i})\geq-\underline{\lambda}_{i}$,  $(\alpha^{d}_{j}-\pi_{j})\geq-\underline{\lambda}_{j}$,  $(\pi_{l}-\alpha^{f}_{l})\geq-\underline{\lambda}_{l}$, and $(\pi_{t}-\alpha^{\xi}_{t})\geq-\underline{\lambda}_{t}$, and from these results we observe the price bounding properties are lost. Importantly, this violation destroys the guarantee of having non-negative profits for all stakeholders (stakeholders can lose money in the market). 

 The previous results lead to interesting implications about ISO behavior under forced participation. The ISO does not need to recognize price bounds for such stakeholders; as such, it can raise or lower prices in violation of their bids to ensure that markets clear (even if they are allocated negative profits). The interpretation for a consumer compelled to purchase is that the ISO can raise the market price above the consumer bid according to $\pi_{j}\leq\alpha^{d}_{j}+\underline{\lambda}_{j}$. The consumer is compelled to accept a market price above its bid. Similarly, a supplier forced to participate in a market may be subject to prices below its service bid according to $\pi_{i}\geq\alpha^{s}_{i}-\underline{\lambda}_{i}$.  From an {\em efficiency} perspective, note that forcing participation implicitly forces the ISO to use services or products that are inefficient (e.g., forces use of a technology even if this is too costly).

We have observed that profit seeking is maintained in the market under forced participation, so Theorem~\ref{ThmProfit} applies to stakeholders unconstrained by minimum allocations. The profit maximizing property does not apply to stakeholders compelled to participate. Due to the limited application of profit seeking, Theorem~\ref{ThmComp} will not be satisfied either, as it is predicated on individual profit maximization. As such, the interpretation of the solution as a competitive equilibrium is lost under forced participation. 

Interestingly, revenue adequacy (Theorem~\ref{RevAqc}) still holds under forced participation since it is predicated on satisfaction of the market clearing constraints. To see this,  consider an optimal set of allocations $(s,\underline{s},d,\underline{d},f,\underline{f},c,\underline{c},g,\underline{g})$ for the market clearing problem subject to enforced allocations ${\underline{s},\underline{d},\underline{f},\underline{c},\underline{g}}$. The clearing constraints hold at optimality, implying:
	\begin{equation*}
	\begin{aligned}
	s_{n,p} + \underline{s}_{n,p} + \sum_{l\in\mathcal{L}^{in}_{n}}{f_{l}} + \sum_{l\in\mathcal{L}^{in}_{n}}{\underline{f}_{l}} +
	\sum_{t\in\mathcal{T}^{gen}_{n,p}}{g_{t}} + \sum_{t\in\mathcal{T}^{gen}_{n,p}}{\underline{g}_{t}}\\ - d_{n,p} - \underline{d}_{n,p} - \sum_{l\in\mathcal{L}^{out}_{n}}{f_{l}} - \sum_{l\in\mathcal{L}^{out}_{n}}{\underline{f}_{l}} - \sum_{t\in\mathcal{T}^{con}_{n,p}}{c_{t}} - \sum_{t\in\mathcal{T}^{con}_{n,p}}{\underline{c}_{t}}=0,
	\end{aligned}
	\end{equation*}
	Applying the market price identities, we express the dual multiplier $\pi^{bal}_{n,p}$ in terms of prices $\pi_{i}$, $\pi_{j}$, $\pi_{l}$, and $\pi_{t}$, and also rewrite technology inputs and outputs in terms of reference products $\bar{p}(t)$. Let us define notation for sets of forced allocations as $\check{\mathcal{S}}$ and unforced allocations as $\dot{\mathcal{S}}$ (using the same marks for each stakeholder class). Now we write:
	\begin{equation*}
	\begin{aligned}
	\sum_{i\in\dot{\mathcal{S}}}{\pi_{i}s_{i}}+\sum_{i\in\check{\mathcal{S}}}{\pi_{i}\underline{s}_{i}}+\sum_{l\in\dot{\mathcal{L}}}{\pi_{l}f_{l}}+\sum_{l\in\check{\mathcal{L}}}{\pi_{l}\underline{f}_{l}}+\sum_{t\in\dot{\mathcal{T}}}{\pi_{t}c_{t}}+\sum_{t\in\check{\mathcal{T}}}{\pi_{t}\underline{c}_{t}}-\sum_{j\in\dot{\mathcal{D}}}{\pi_{j}d_{j}}-\sum_{j\in\check{\mathcal{D}}}{\pi_{j}\underline{d}_{j}}=0.
	\end{aligned}
	\end{equation*}
This relationship demonstrates that revenue adequacy holds in the presence of forced allocations. It follows that revenue collected from consumers is sufficient to make payments to stakeholders under relaxed price bounding properties.

The results of Theorem~\ref{ThmNoCycles} will be lost as well under forced participation. Specifically, forcing participation allows for potential transport costs that are zero or negative and can create inefficient routes (cycles). This result intuitively makes sense since forcing (say demand satisfaction) will force transportation players to provide services (even if such services are inefficient). We can thus see that forcing participation of even one player can propagate through the market to activate it but this can create widespread inefficiencies. 

\section{Market-Activating Bids}\label{MinClearBids}

Motivated by the loss of economic properties resulting from forced market participation, we are interested in activating a market through economic incentives. Specifically, such incentives are provided by bidding costs. Our goal is to determine market-activating bid values, by which we mean the set of bids from consumers and suppliers capable of clearing market transactions with zero profit (i.e., these are break-even bids). Such bids define threshold market clearing prices and market existence criteria. Our interest in zero-profit markets is also driven by an inherent property of the market clearing framework: the distribution of profits among cleared stakeholders is often degenerate (there exist multiple ways to allocate the profit between stakeholders). A solution with zero profit avoids the profit allocation problem, even if it does not define unique allocations. 

Our results provide intuition for policy makers to strategically determine how to activate markets by either entering the market as stakeholders or by providing incentives that shift stakeholder bids (e.g., provide credits).  For instance, in a nutrient-trading market, a government agency that represents a lake can offer to allow nutrient emissions but subject to a tax that covers remediation costs associated with algal blooms. This program (if the emissions cost is sufficiently high) will create an incentive for startup technologies that offer services to recover nutrients. These incentives provide an economic setting where new technologies can be profitable because they offer a solution that is less expensive than the alternative. In this case, the objective of imposing a cost on nutrient emissions is to incentivize the businesses that produce the emissions to pay for nutrient recovery technologies. The incentive structure activates the market for nutrient recovery. Market-activating bids provide incentives that create a favorable setting to invest in new technologies. This is precisely how coordinated electricity markets operate in the US; here, satisfaction of demands is enforced by defining a bid known as the value-of-lost-load (VOLL). This bid cost captures the value of electricity to society and is set to values that are as high as 1,000 USD per MWh (the average cost in the US is around 10 USD per MWh). Effectively, VOLL operates as a market-activating bid that promotes provision of electricity. This is used, for instance, as a mechanism to enforce provision of electricity in remote areas. Effectively, market activating bids enforce participation but this is done via economic incentives (not by enforcing constraints). This approach preserves the economic properties of the market. 

We define a market-activating bid for a consumer $j\in\mathcal{D}^{+}$ as $\underline{\alpha}^{d}_{j}$ and for a supplier $i\in\mathcal{S}^{-}$ as $\underline{\alpha}^{s}_{i}$. Under this conceptualization, the other stakeholder bids $\alpha^{s}_{i},\; i\in\mathcal{S}^{+}$, $\alpha^{d}_{j},\; j\in\mathcal{D}^{-}$, $\alpha^{f}_{l},\; l\in\mathcal{L}$, and $\alpha^{c}_{t},\; t\in\mathcal{T}$ are interpreted as constants used to determine $\underline{\alpha}^{d}_{j}$ and $\underline{\alpha}^{s}_{i}$. The stakeholders $(\mathcal{D}^{+}, \mathcal{S}^{-})$ represent revenue sources in a market and $(\mathcal{S}^{+}, \mathcal{D}^{-},\mathcal{L},\mathcal{T})$ represent revenue sinks. Here, $\mathcal{S}=\mathcal{S}^-\cup\mathcal{S}^+$ and $\mathcal{D}=\mathcal{D}^-\cup\mathcal{D}^+$. The bids $(\underline{\alpha}^{d},\underline{\alpha}^{s})$ define remuneration rates corresponding to the allocations $(s,d,f,c,g)$ resulting in clear transactions $(\mathcal{S}^{*},\mathcal{D}^{*},\mathcal{L}^{*},\mathcal{T}^{*})$ with profits $\phi=(\phi^{s},\phi^{d},\phi^{f},\phi^{c})=0$. We first demonstrate that market-activating bids will result in zero profits through a revenue adequacy argument. Subsequently, we demonstrate how market-activating bids can be calculated by exploiting the topology of the supply chain graph  in the form of what we call a {\em stakeholder graph}.

\subsection{Revenue Adequacy Interpretation of Market-Activating Bids}

We use the definition of revenue adequacy to demonstrate that market-activating bids provide zero profit while remunerating stakeholders.
\begin{theorem}
	\label{BERevAqc}
	The market-activating bids $(\underline{\alpha}^{d},\underline{\alpha}^{s})$ with associated optimal allocations $(d,s,f,c,g)$  for  $(\mathcal{S}^{*},\mathcal{D}^{*},\mathcal{L}^{*},\mathcal{T}^{*})$ provide the economic funds required to achieve revenue adequacy with zero profit.
\end{theorem}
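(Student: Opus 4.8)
The plan is to derive the statement directly from revenue adequacy (Theorem~\ref{RevAqc}) together with the price-bounding properties (Theorems~\ref{ThmProfit} and~\ref{ThmPrice}), after the key observation that a market-activating bid is, by construction, precisely the value that makes the corresponding price bound tight. For any cleared stakeholder the profit $\phi=(\pi-\alpha)(\cdot)$ is nonnegative with strictly positive allocation; imposing the break-even (zero-profit) condition that \emph{defines} $(\underline{\alpha}^{d},\underline{\alpha}^{s})$ forces $\pi_{i}=\underline{\alpha}^{s}_{i}$ for $i\in\mathcal{S}^{-}\cap\mathcal{S}^{*}$ and $\pi_{j}=\underline{\alpha}^{d}_{j}$ for $j\in\mathcal{D}^{+}\cap\mathcal{D}^{*}$, while for the fixed-bid stakeholders $\pi=\alpha$ holds exactly on the cleared (positive-allocation) subsets and the allocation vanishes otherwise. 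Hence every profit term is zero, which settles the zero-profit half of the claim immediately.

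For the revenue-adequacy half I would start from the identity established in the proof of Theorem~\ref{RevAqc},
\[
\sum_{j\in\mathcal{D}}\pi_{j}d_{j}=\sum_{i\in\mathcal{S}}\pi_{i}s_{i}+\sum_{l\in\mathcal{L}}\pi_{l}f_{l}+\sum_{t\in\mathcal{T}}\pi_{t}c_{t},
\]
which holds because the cleared allocation satisfies the nodal balances~\eqref{Balance} and the price identities~\eqref{Identities} (including the rewriting of technology terms onto the reference product $\bar{p}(t)$ via the yield coefficients $\gamma_{t,p}$). Substituting the tight price relations from the previous step and dropping the terms carrying zero allocation turns this balance into a relation purely among bids and allocations. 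Splitting suppliers and consumers into the sign classes $\mathcal{S}^{\pm}$ and $\mathcal{D}^{\pm}$ and moving the negative-bid terms to the appropriate side then yields
\[
\sum_{j\in\mathcal{D}^{+}}\underline{\alpha}^{d}_{j}d_{j}+\sum_{i\in\mathcal{S}^{-}}\bigl(-\underline{\alpha}^{s}_{i}\bigr)s_{i}=\sum_{i\in\mathcal{S}^{+}}\alpha^{s}_{i}s_{i}+\sum_{j\in\mathcal{D}^{-}}\bigl(-\alpha^{d}_{j}\bigr)d_{j}+\sum_{l\in\mathcal{L}}\alpha^{f}_{l}f_{l}+\sum_{t\in\mathcal{T}}\alpha^{c}_{t}c_{t},
\]
i.e., the funds collected from the revenue sources $(\mathcal{D}^{+},\mathcal{S}^{-})$ exactly cover the funds paid to the revenue sinks $(\mathcal{S}^{+},\mathcal{D}^{-},\mathcal{L},\mathcal{T})$. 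This is exactly the asserted revenue adequacy attained by the market-activating bids, with all stakeholder profits zero.

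I expect the main obstacle to be bookkeeping rather than anything conceptual: one must verify that the zero-profit condition can be met \emph{simultaneously} for all cleared stakeholders, so that a single consistent price vector $\pi$ exists that equals each cleared bid on its own support, and one must track the sign conventions for $\mathcal{S}^{-}$ and $\mathcal{D}^{+}$ carefully so that they genuinely appear as revenue sources in the final identity. A secondary technicality—correctly collapsing the generation and consumption terms of each technology onto its reference product using $\gamma_{t,p}$—is inherited verbatim from the proof of Theorem~\ref{RevAqc} and introduces no new idea.
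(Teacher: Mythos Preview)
Your proposal is correct and follows essentially the same approach as the paper: both start from the defining zero-profit condition of market-activating bids to deduce $\pi=\alpha$ on each cleared stakeholder class, then substitute these equalities into the revenue-adequacy identity of Theorem~\ref{RevAqc} (split across $\mathcal{S}^{\pm},\mathcal{D}^{\pm}$) to obtain the funds-balance equation. Your additional remarks about simultaneous consistency of the price vector and sign bookkeeping are reasonable caveats but do not alter the argument, which the paper treats as following directly from the definition.
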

\begin{proof}
	By definition, the market-activating bids provide zero profit; consequently, we substitute the bids $(\underline{\alpha}^{d},\underline{\alpha}^{s})$ and $(\alpha^{s},\alpha^{d},\alpha^{f},\alpha^{c})$ to obtain:
	\begin{equation*}
	\begin{aligned}
	\phi^{d}_{j}(\pi_{j},\underline{\alpha}^{d}_{j},d_{j}) &= (\underline{\alpha}^{d}_{j} - \pi_{j})d_{j}=0,\quad j\in\mathcal{D}^{+*}\\
	\phi^{s}_{i}(\pi_{i},\underline{\alpha}^{s}_{i},s_{i}) &= (\pi_{i} - \underline{\alpha}^{s}_{i})s_{i}=0,\quad i\in\mathcal{S}^{-*}\\
	\phi^{s}_{i}(\pi_{i},\alpha^{s}_{i},s_{i}) &= (\pi_{i} - \alpha^{s}_{i})s_{i}=0,\quad i\in\mathcal{S}^{+*}\\
	\phi^{d}_{j}(\pi_{j},\alpha^{d}_{j},d_{j}) &= (\alpha^{d}_{j} - \pi_{j})d_{j}=0,\quad j\in\mathcal{D}^{-*}\\
	\phi^{f}_{l}(\pi_{l},\alpha^{f}_{l},f_{l}) &= (\pi_{l} - \alpha^{f}_{l})f_{l}=0,\quad l\in\mathcal{L}^{*}\\
	\phi^{c}_{t}(\pi_{t},\alpha^{c}_{t},c_{t}) &= (\pi_{t} - \alpha^{c}_{t})c_{t}=0,\quad t\in\mathcal{T}^{*}
	\end{aligned}
	\end{equation*}
	Since the allocations $(d,s,f,c,g)$ are nonzero in cleared transactions, the profit relationships imply the market-activating pricing properties:
	\begin{equation*}
	\begin{aligned}
	\pi_{j} = \underline{\alpha}^{d}_{j},\quad j\in\mathcal{D}^{+*}\\
	\pi_{i} = \underline{\alpha}^{s}_{i},\quad i\in\mathcal{S}^{-*}\\
	\pi_{i} = \alpha^{s}_{i},\quad i\in\mathcal{S}^{+*}\\
	\pi_{j} = \alpha^{d}_{j},\quad j\in\mathcal{D}^{-*}\\
	\pi_{l} = \alpha^{f}_{l},\quad l\in\mathcal{L}^{*}\\
	\pi_{t} = \alpha^{c}_{t},\quad t\in\mathcal{T}^{*}
	\end{aligned}
	\end{equation*}
	We thus have that market prices for clear transactions are equal to the corresponding bids, including the market-activating bids. Using the definition of revenue adequacy in Theorem~\ref{RevAqc} and substituting the market-activating pricing results we obtain:
	\begin{equation*}
	\begin{aligned}
	\sum_{j\in\mathcal{D}^{+}}{\underline{\alpha}^{d}_{j}d_{j}}-\sum_{i\in\mathcal{S}^{-}}{\underline{\alpha}^{s}_{i}s_{i}}
	=\sum_{i\in\mathcal{S}^{+}}{\alpha^{s}_{i}s_{i}}-\sum_{j\in\mathcal{D}^{-}}{\alpha^{d}_{j}d_{j}}+\sum_{l\in\mathcal{L}}{\alpha^{f}_{l}f_{l}}+\sum_{t\in\mathcal{T}}{\alpha^{c}_{t}c_{t}}
	\end{aligned}
	\end{equation*}
	demonstrating that market-activating bids provide funds to ensure revenue adequacy with zero stakeholder profit.
\end{proof}

The intuition behind this result is that stakeholders $(\mathcal{S}^{+},\mathcal{D}^{-},\mathcal{L},\mathcal{T})$ in a coordinated market require a given level of remuneration to participate, and this remuneration may be obtained from the stakeholders $(\mathcal{D}^{+},\mathcal{S}^{-})$ (these can be seen as players that drive the market). We now provide a procedure to determine market-activating bids.

\subsection{Computing Market-Activating Bids}

Market-activating bid calculations are based on the topology of the SC graph. To properly define these calculations, we turn to graph theoretical properties of SC networks. To facilitate our analysis, we propose the concept of the {\em stakeholder graph}; this graph is a product-based representation of the SC (typically analyzed using a node-based representation) that reveals how products connect stakeholders. Specifically, the stakeholder graph captures how products and associated economic values flow from suppliers to consumers through transformation and transportation steps. The stakeholder graph will also reveal natural ordering and boundaries of the SC (defined by suppliers and consumers) and its internal steps (transportation and transformation). 

\begin{definition}{\bf Stakeholder Graph.}	 The stakeholder graph associated with a SC is denoted as $G(V,E)$; here, the stakeholders (suppliers, consumers, transportation providers, and technology providers) are defined as nodes $V=\{\mathcal{S},\mathcal{D},\mathcal{L},\mathcal{T}\}$  and the arcs $E$ connect the players as follows: 
\begin{itemize}
\item A supplier $i$ and consumer $j$ are connected if $p(i)=p(j)$ and $n(i)=n(j)$.
\item A supplier $i$ and transport provider $l$ are connected if $p(i)=p(l)$ and $n_{s}(l)=n(i)$ (the case $n_{r}(l)=n(i)$ does not make sense because a supplier cannot receive product). 
\item A consumer $j$ and transport provider $l$ are connected if $p(j)=p(l)$ and $n_{r}(l)=n(j)$ (the case $n_{s}(l)=n(j)$ does not make sense because a consumer cannot send product).
\item A supplier $i$ and technology provider $t$ are connected if $p(i)\in \mathcal{P}^{con}_t$  (the case $p(i)\in \mathcal{P}^{gen}_t$ does not make sense because the supplier cannot receive product).  
\item A consumer $i$ and technology provider $t$ are connected if $p(j)\in \mathcal{P}^{gen}_t$ (the case $p(j)\in \mathcal{P}^{con}_t$ does not make sense because the consumer cannot send product).  
\item A transportation provider $l$ and a technology provider $t$ are connected if either: i) $p(l)\in \mathcal{P}^{con}_t$ and $n_{r}(l)=n(t)$ or ii) $p(l)\in \mathcal{P}^{gen}_t$ and $n_{s}(l)=n(t)$.
\item Suppliers $i$ and $i'$ cannot be connected (implies one receives product).
\item Consumers $j$ and $j'$ cannot be connected (implies one provides product).
\item Transport providers $l$ and $l'$ are connected if either: i) $n_{s}(l)=n_{r}(l')$ or ii) $n_{r}(l)=n_{s}(l')$.
\item Technology providers $t$ and $t'$ are connected if $n(t)=n(t')$ and either:  i) there exists $p\in \mathcal{P}_t^{gen}$ such that $p\in\mathcal{P}_{t'}^{con}$ or ii) there exists $p\in \mathcal{P}_t^{con}$ such that $p\in\mathcal{P}_{t'}^{gen}$.
\end{itemize}
\end{definition}

Note that the above definitions imply that the boundary of the stakeholder graph $\mathcal{B}(G)\subseteq V$ is such that $i\in \mathcal{B}(G)$ and $j\in \mathcal{B}(G)$ but $l\notin \mathcal{B}(G)$ and $t\notin \mathcal{B}(G)$ . In other words, suppliers and consumers define the boundaries and technology and transport providers are internal nodes. The stakeholder graph reveals important rules of the market architecture that are not immediately obvious from the traditional node-based representation. We now proceed to show that the abstraction allows us to define market-activating bids by following paths that connect the boundaries of the graph. The stakeholder graph can be seen as an alternative representation of the so-called p-graph used in process engineering \cite{Heckl2010}. In the stakeholder graph, however, we do not define product nodes (as products define stakeholder graph connectivity) and we capture transportation.

We illustrate the utility of the stakeholder graph representation of the SC in Figure~\ref{fig_comparison}. The product-based representation (figure center) reveals a directed acyclic graph (DAG) structure of the SC which is not obvious from the node-based representation (left). In addition, the product-based representation reveals how products flow between boundaries (from suppliers to consumers) through internal steps (transportation and transformation). We emphasize this representation using a set of eleven products in Figure~\ref{fig_comparison} (right) demonstrating that products define the connectivity between stakeholders. Paths through the stakeholder graph branch at some technologies (because they can produce multiple products or there are multiple consumers of a product) or collapse branches (because they consume multiple products or there are multiple suppliers of a product).

\begin{figure}[!htb]
	\center{\includegraphics[width=0.95\textwidth]{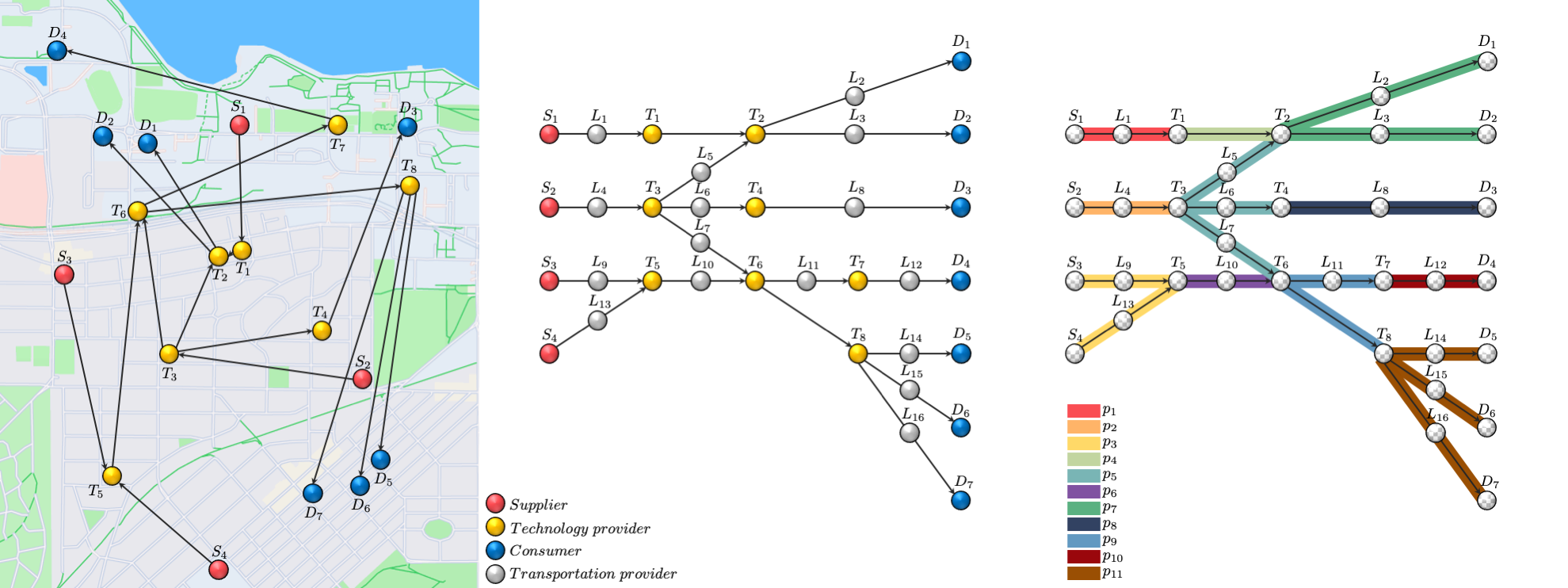}}
	\caption{Node-based (left) and product-based (center, right) representation of a supply chain. The product-based abstraction (stakeholder graph) elucidates paths in the SC that connect suppliers to consumers via transport and transformation steps. The center figure illustrates connections between stakeholders. The right figure emphasizes that products define connections in the stakeholder graph.}
	\label{fig_comparison}
\end{figure}

We proceed to calculate market-activating bids in terms of paths between nodes in the stakeholder graph representation. A path is a series of distinct connected nodes in a graph. Each node in the stakeholder graph has an associated stakeholder type (a supplier, a consumer, a transport, or a technology). All stakeholder graph nodes retain the associated stakeholder bid $(\alpha^{s}_{i},\alpha^{d}_{j},\alpha^{f}_{l},\alpha^{c}_{t})$ as an attribute. Technology nodes in the stakeholder graph also retain the technology yield coefficients $\gamma_{t,p}$ as attributes. Arcs in this representation are unweighted, capturing only topological (connectivity) information. As a convenient shorthand, we will denote the set of arcs in a stakeholder graph $\{\mathcal{N},\mathcal{P}\}$ to condense the list of attributes. All flows in the stakeholder graph are directed from suppliers towards technology providers or consumers. The graph may be partitioned into layers of suppliers, technology providers, transportation providers, and consumers. That such a partitioning can be achieved will allow us to express market-activating bids with respect to paths through the stakeholder graph. The paths of interest connect a stakeholder $u\in V$ (from whom the ISO will collect revenue) to a stakeholder $v\in V$ (to whom the revenue will be allocated). Stakeholders that activate the market (by providing revenue) are defined as $u\in\{\mathcal{D}^{+},\mathcal{S}^{-}\}$ with $\mathcal{D}^+\subseteq \mathcal{D}$ and $\mathcal{S}^-\subseteq\mathcal{S}$. The stakeholders reached by the activation (to whom revenue is allocated) are defined as $v\in\{\mathcal{D}^{-},\mathcal{S}^{+},T,L\}$. For convenience, we define the set of market-activating stakeholders as $\mathcal{A}:=\{\mathcal{D}^{+},\mathcal{S}^{-}\}$ and the set of reached stakeholders as $\mathcal{R}:=\{\mathcal{D}^{-},\mathcal{S}^{+},T,L\}$. We denote a path from a stakeholder $u$ to $v$ as $W$ and use $\mathcal{W}(u,v)$ to denote the set of all paths that connect $u$ and $v$.  

Central to our analysis is the observation that the stakeholder graph will tend to have a DAG structure (it has no product cycles).  The DAG structure will be key in analyzing how allocations and economic value flow through the SC. We will seek to establish conditions under which the presence of product cycles in a market clearing allocation implies that the allocation is infeasible (non-physical). In doing so, we note that product cycles can only be induced internally in the network by connecting technologies via cycles. Consequently, we will seek to follow a products along such technology cycles. 

	\begin{figure}[!htb]
		\center{\includegraphics[width=0.5\textwidth]{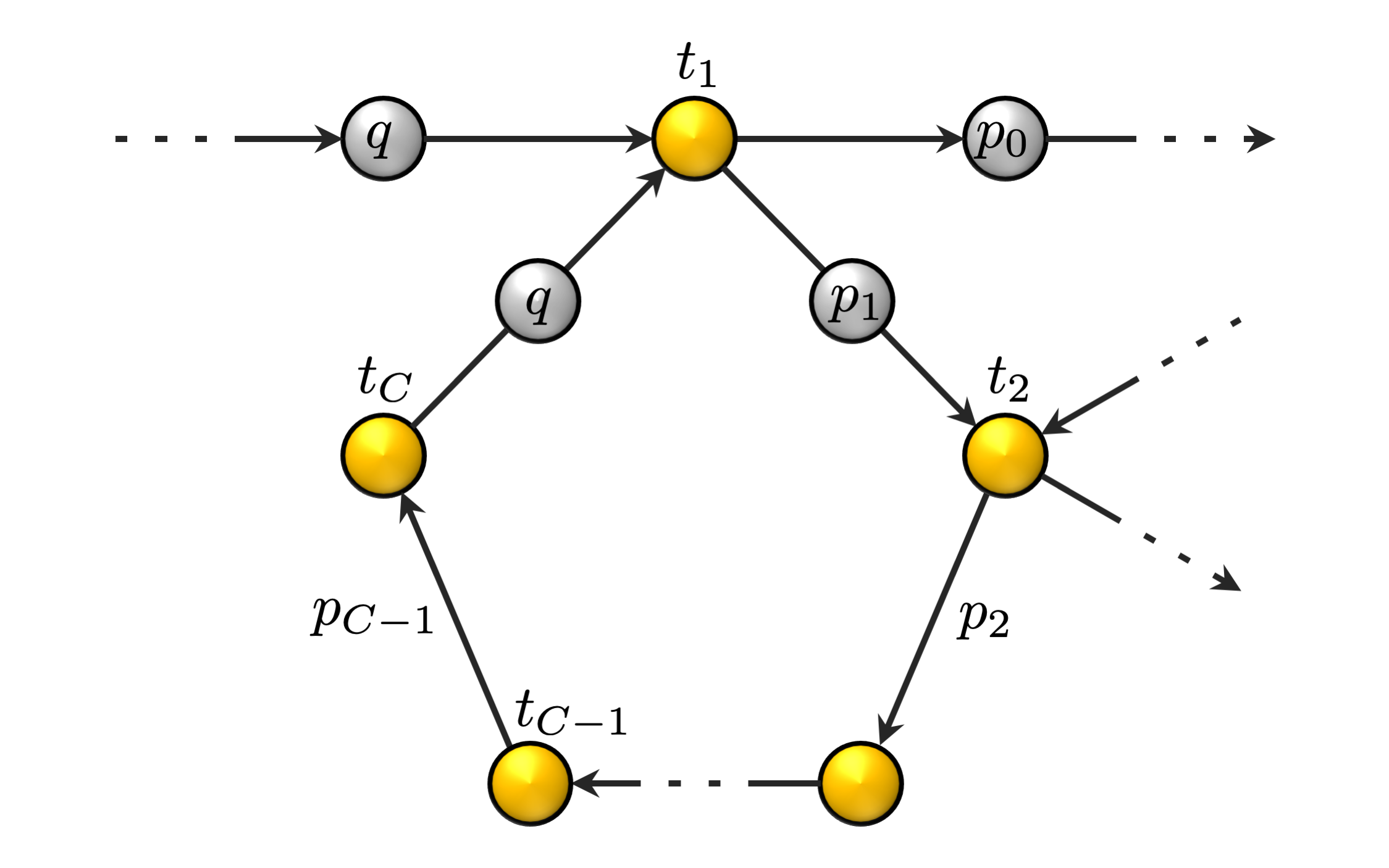}}
		\caption{Example supply chain structure with a technology cycle used in the proof of Theorem \ref{consistency}. The cycle eventually converts a output product of a technology back into one of its input products. The cycle may contain transportation cycles or other input and output products along the cycle (this does not change the conclusions of the analysis). Product transport is indicated by a product label inside a transport node (in gray) products exchanged directly between technology providers are indicated by a product label adjacent to the arrow connecting them.}
		\label{fig_CycleProof}
	\end{figure}

\begin{theorem}\label{consistency}
	The market clearing procedure can deliver non-physical allocations (infeasible to the market clearing problem) in the presence of technology cycles.
\end{theorem}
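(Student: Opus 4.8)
The plan is to construct an explicit instance of the clearing problem \eqref{Primal} built around the technology cycle of Figure~\ref{fig_CycleProof}, and to show that its feasible set — and, for a suitable choice of bids, its optimal set — contains an allocation that serves strictly positive demand for a product that no supplier ever injects anywhere in the network, i.e.\ an allocation that synthesizes product out of nothing. Such an allocation satisfies every constraint of \eqref{Primal}, so the clearing procedure returns it, yet it violates conservation of mass and is therefore non-physical; this is the allocation asserted by the theorem.

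I would fix the structure as follows. Let $t_1,\dots,t_c$ be technology providers forming a cycle in which $t_r$ has reference input $\bar p_r\in\mathcal P^{con}_{t_r}$ and output $\bar p_{r+1}\in\mathcal P^{gen}_{t_r}$ (indices cyclically, $\bar p_{c+1}=\bar p_1$), with cycle yield $\gamma_r:=\gamma_{t_r,\bar p_{r+1}}$ and $\gamma_{t_r,\bar p_r}=1$; for the basic case take them all at one node with no side products, $\mathcal P^{con}_{t_r}=\{\bar p_r\}$, $\mathcal P^{gen}_{t_r}=\{\bar p_{r+1}\}$. The general case allows the $t_r$ at distinct nodes joined by transport providers $l_r$ carrying $\bar p_{r+1}$ from $n(t_r)$ to $n(t_{r+1})$, and extra side inputs/outputs; as the figure caption notes, neither feature affects the argument, since transport is mass-preserving and any side product can be balanced by an auxiliary off-cycle supplier or consumer with innocuous bids. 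The essential stipulation is that \emph{no supplier offers any of the cycle products $\bar p_1,\dots,\bar p_c$}, while a single consumer $j$ with capacity $\overline d_j$ demands $\bar p_1$ at $n(t_1)$ and $\bar p_1$ is generated nowhere except by $t_c$.

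Next I would chain the nodal balances. The conversion constraint \eqref{Conversion} lets us write all consumption and generation of $t_r$ as $\gamma_{t_r,\cdot}\,\xi_{t_r}$; substituting into \eqref{Balance} (equivalently, using \eqref{BalanceCondensed}) at each cycle product gives $\xi_{t_{r+1}}=\gamma_r\xi_{t_r}$ for $r=1,\dots,c-1$ (the products $\bar p_2,\dots,\bar p_c$ are neither supplied nor demanded, and any transport flows equal the matching generation rates and telescope away) and the closure $\gamma_c\xi_{t_c}=\xi_{t_1}+d_j$ at $\bar p_1$. Composing around the loop collapses this to the single scalar identity $(\Gamma-1)\xi_{t_1}=d_j$ with $\Gamma:=\prod_{r=1}^{c}\gamma_r$. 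If the instance is chosen with $\Gamma>1$, then for every $d_j\in(0,\overline d_j]$ the assignment $\xi_{t_1}=d_j/(\Gamma-1)>0$, $\xi_{t_r}=\big(\prod_{q<r}\gamma_q\big)\xi_{t_1}>0$, all supply variables zero, and transport/technology capacities enlarged as needed, satisfies every constraint of \eqref{Primal}; it is feasible yet delivers $d_j>0$ units of $\bar p_1$ with zero total supply of any cycle product, hence is non-physical. To make the clearing procedure actually return it, note that with the balance identities every term of \eqref{Obj} is proportional to $d_j$, so the social welfare equals $d_j(\alpha^d_j-K)$ for a finite, yield- and bid-dependent constant $K\ge0$; choosing $\alpha^d_j>K$ makes it strictly positive on the allocation above and zero on the all-zero allocation, and since $d_j>0$ forces $\xi_{t_c}>0$, hence $\xi_{t_{c-1}}>0$, and so on around the loop, every optimizer is non-physical.

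For completeness I would remark that $\Gamma=1$ collapses the identity to $d_j=0$ with $\xi_{t_1}$ free — a degenerate family of optima carrying arbitrary closed-cycle flow whenever the cycle's operating cost vanishes, e.g.\ under the nonpositive technology bids permitted in Section~\ref{sec: forced markets} — whereas $\Gamma<1$ forces all cycle flows to zero, and if the consumer's participation is additionally forced the identity $(\Gamma-1)\xi_{t_1}=\underline d_j>0$ has no solution with $\xi_{t_1}\ge0$, so \eqref{Primal} is then outright infeasible; thus, unlike transport cycles, which Theorem~\ref{ThmNoCycles} excludes under positive transport bids, technology cycles are not ruled out by the clearing formulation itself. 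The main obstacle is the bookkeeping for a \emph{general} technology cycle — technologies with several inputs and outputs, cycles routed through transportation subcycles, and branch/merge points — arranged so that everything off the loop either telescopes away (transport, being mass-preserving) or is absorbed into auxiliary off-cycle suppliers and consumers with harmless bids and capacities, leaving the clean identity $(\Gamma-1)\xi_{t_1}=d_j$. A secondary subtlety is pinning down precisely what ``non-physical'' means — here, violation of global mass conservation, which the balances \eqref{Balance} do not encode for technologies — so as to distinguish this genuine pathology from the mere \emph{inefficiency} that Theorem~\ref{ThmNoCycles} handles for transportation.
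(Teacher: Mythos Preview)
Your argument is correct and shares the paper's core strategy: build an explicit technology cycle, chain the balance and conversion constraints around it, collapse everything to a single scalar identity in the cumulative yield, and case-split on that yield. The configurations are complementary rather than identical. The paper couples the cycle to the outside world on the \emph{input} side --- an external transport flow $f_{n',n,q}$ feeds product $q$ into $t_1$, and the chained balances reduce to $g_{t_C,q}(1-\gamma_C)/\gamma_C = f_{n',n,q}$; the non-physical case singled out is $\gamma_C=1$, where any positive cycle flow with zero external input is LP-feasible. You instead couple on the \emph{output} side --- a consumer $j$ extracts $\bar p_1$ from the cycle, the chain collapses to $(\Gamma-1)\xi_{t_1}=d_j$, and your non-physical case is $\Gamma>1$, where strictly positive demand is served from zero supply. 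Because the external flow sits on opposite sides of the balance, the roles of the yield regimes $\Gamma>1$ and $\Gamma<1$ are swapped between the two constructions; your $\Gamma=1$ degeneracy matches the paper's $\gamma_C=1$ case.

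Your proof buys one thing the paper's does not: you explicitly choose bids so that the social welfare is $d_j(\alpha^d_j-K)$ and strictly positive on the non-physical allocation, forcing the clearing procedure to \emph{return} it rather than merely admit it as feasible. The paper stops at LP-feasibility of the bad allocation and does not argue optimality. Conversely, the paper's setup --- with an external feed and a side output $p_0$ leaving the cycle --- is marginally closer to the generic SC of Figure~\ref{fig_CycleProof}, while your base case is a bare loop with a single consumer. Both constructions establish the theorem.
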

\begin{proof}
The proof is by contradiction. We establish an arbitrary technology cycle within an arbitrary SC. Without loss of generality, assume the cycle connects to the SC through a technology $t_{1}$ which has as its outputs at least two products, $p_{0}$ and $p_{1}$, and at least one input $q$. Define the technology cycle consisting of $C$ technologies such that $t_{2}$ has as an input product $p_{1}$ and produces product $p_{2}$, technology $t_{3}$ consumes $p_{2}$ and produces $p_{3}$, and so on, with $t_{C}$ consuming $p_{C-1}$ and producing product $q$ which can now be consumed by technology $t_{1}$. Assume that the ISO delivers a physical allocation in the SC containing such a cycle. Associated with each output from each technology is a yield coefficient $\gamma_{t,p}$ (we assume that input coefficients are all one without loss of generality) and with these assumptions we construct the constraints for the technology cycle. We assume that all technologies in the cycle are located at a single node $n$, and that node $n'$  is another SC node comprising a source of $q$ and a sink for $p_{0}$. 

We make no other assumptions about the nature of the SC, and observe that technology nodes in the cycle may be located at different geographical nodes (i.e., a transportation provider may be required within the cycle to move product between technologies) without changing the nature of our results. Moreover, elements of the cycle may have inputs or outputs from elsewhere in the SC. We depict the supply chain structure in Figure~\ref{fig_CycleProof}. Under this construct, we have the following balance and conversion constraints for the cycle.
	\begin{equation*}
	\begin{aligned}
	f_{n',n,q} + g_{t_{C},q} = c_{t_{1},q}\\ 
	g_{t_{1},p_{0}} = f_{n,n',p_{0}}\\ 
	g_{t_{1},p_{1}} = c_{t_{2},p_{1}}\\ 
	g_{t_{2},p_{2}} = c_{t_{3},p_{2}}\\ 
	\vdots\quad\\
	g_{t_{C-1},p_{C-1}} = c_{t_{C},p_{C-1}}\\ 
	\gamma_{t_{1},p_{0}}c_{t_{1},q} = g_{t_{1},p_{0}}\\ 
	\gamma_{t_{1},p_{1}}c_{t_{1},q} = g_{t_{1},p_{1}}\\ 
	\gamma_{t_{2},p_{2}}c_{t_{2},p_{1}} = g_{t_{2},p_{2}}\\ 
	\gamma_{t_{3},p_{3}}c_{t_{3},p_{2}} = g_{t_{3},p_{3}}\\ 
	\vdots\quad\\
	\gamma_{t_{C},q}c_{t_{C},p_{C-1}} = g_{t_{C},q}
	\end{aligned}
	\end{equation*}
	We repeat substitution of these constraints to define $g_{t_{C},q}$ in terms of product flows around the cycle. The procedure is illustrated as follows:
	\begin{equation*}
	\begin{aligned}
	g_{t_{C},q}&=\gamma_{t_{C},q}c_{t_{C},p_{C-1}}\\
	&=\gamma_{t_{C},q}g_{t_{C-1},p_{C-1}}\\
	&\quad\vdots\\
	&=\gamma_{t_{C},q}...\gamma_{t_{C},p_{3}}c_{t_{3},p_{2}}\\
	&=\gamma_{t_{C},q}...\gamma_{t_{3},p_{3}}g_{t_{2},p_{2}}\\
	&=\gamma_{t_{C},q}...\gamma_{t_{3},p_{3}}\gamma_{t_{2},p_{2}}c_{t_{2},p_{1}}\\
	&=\gamma_{t_{C},q}...\gamma_{t_{3},p_{3}}\gamma_{t_{2},p_{2}}g_{t_{1},p_{1}}\\
	&=\gamma_{t_{C},q}...\gamma_{t_{3},p_{3}}\gamma_{t_{2},p_{2}}\gamma_{t_{1},p_{1}}c_{t_{1},q}\\
	&=\gamma_{t_{C},q}...\gamma_{t_{3},p_{3}}\gamma_{t_{2},p_{2}}\gamma_{t_{1},p_{1}}(f_{n',n,q} + g_{t_{C},q})
	\end{aligned}
	\end{equation*}
	We denote the cumulative product yield $\gamma_{C} := \gamma_{t_{C},q}...\gamma_{t_{3},p_{3}}\gamma_{t_{2},p_{2}}\gamma_{t_{1},p_{1}}$, resulting in:
	\begin{equation*}
	\begin{aligned}
	g_{t_{C},q}\frac{1-\gamma_{C}}{\gamma_{C}}=f_{n',n,q}
	\end{aligned}
	\end{equation*}
	We have the cycle production rate of product $q$ ($g_{t_{C},q}$) in terms of the incoming rate of product $q$ to node $n$, $f_{n',n,q}$. The value of $\gamma_{C}$ defines the relationship between the two, and results in one of three possible outcomes:
	\begin{itemize}
		\item\label{item1} $\gamma_{C}>1$: implies negative flow, thus the only feasible solution for the market clearing problem is $f_{n',n,q}=g_{t_{C},q}=0$,
		\item\label{item2} $\gamma_{C}=1$: no feasible solution exists for market clearing for positive $f_{n',n,q}$; conversely, any positive $g_{t_{C},q}$ with $f_{n',n,q}=0$ is a feasible but non-physical solution,
		\item $\gamma_{C}<1$: there is a feasible solution for market clearing satisfying the relationship with positive $g_{t_{C},q}$ and $f_{n',n,q}$.
	\end{itemize}

	The existence of possible non-physical solutions for $\gamma_{C} \geq 1$ implies a contradiction. 
\end{proof}

The result indicates that existence of technology cycles in the market clearing allocations are determined by the cumulative yield in a technology cycle. A cumulative cycle yields that have a value larger than one leads to non-physical solutions, in particular, when cycle yield is equal to one, we have demonstrated that the ISO can allocate an arbitrary positive amount to the cycle while simultaneously allocating zero to its input. This result is non-physical because flows in the market clearing model must be traceable back to a supplier; other solutions imply a violation of the physical continuity of the model. Similarly, a cycle yield greater than one implies a negative flow, which is non-physical, but since a negative allocation is infeasible, the only feasible allocation is zero. The only way that the market clearing procedure delivers an allocation with cycles is if the cumulative yield is strictly less than one, which would imply that there is loss of product along the cycle. This indicates that the cycle provides an inefficient route.  We highlight that the previous theorem does not prove that this case cannot exist but it provides intuition as to why it might not occur. In practice, we have observed empirically that clearing allocations do not contain technology cycles. Formalizing this result is challenging due to interconnections of prices and physical allocations and will be left as a topic of future work. In the discussion that follows we will assume that the SC has a DAG structure. 

The stakeholder graph provides a node ordering which directs flows through paths of stakeholders from suppliers to consumers. Where multiple suppliers or consumers are present in a single stakeholder graph $G$, we are be able to partition $G$ into subgraphs $G'\subseteq G$. These subgraphs are proper components obtained by partitioning of $G$ along supplier and consumer vertices.
\begin{definition}{\bf Stakeholder Graph Component:}
	A stakeholder graph component is a subgraph $G'\subseteq G$ created by simultaneous partitioning along supplier and consumer vertices. The supply and demand nodes are included in all subgraphs created this way.
\end{definition}
Based on the definitions of the stakeholder graph and of the stakeholder graph component, we can establish at least one such component always exists. Specifically, consider the topology of an arbitrary graph $G$, either all supply and demand nodes in $G$ have degree one (whereby $G$ comprises one component) or there is at least one supply or demand node in $G$ with degree greater than one (in which case $G$ may comprise more than one component).

\begin{figure}[!htb]
	\center{\includegraphics[width=0.75\textwidth]{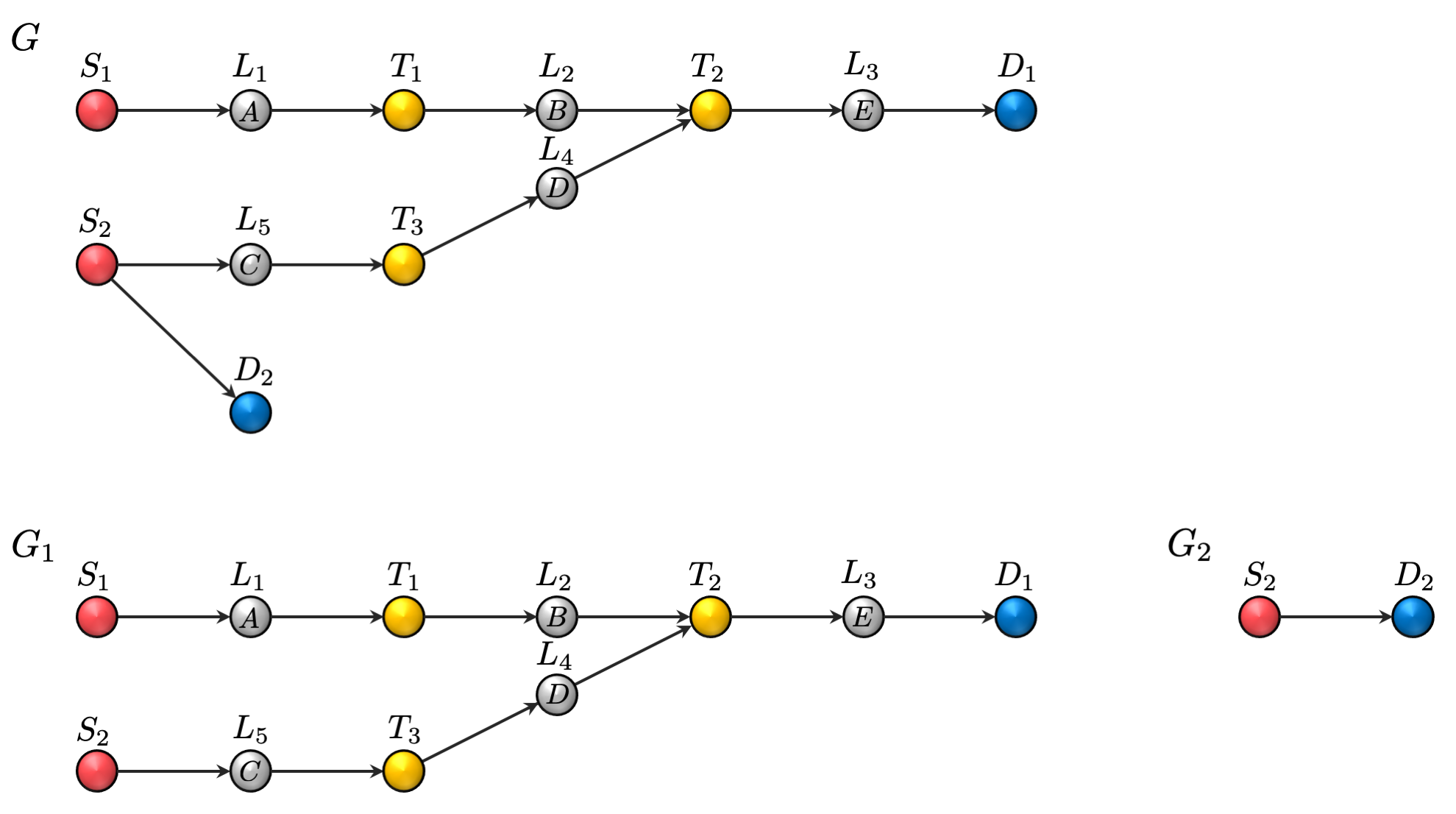}}
	\caption{Example stakeholder graph with two components. Partitioning $G$ along supplier node $S_2$ yields subgraphs $G_1$ and $G_2$ that can be analyzed independently.}
	\label{fig_Component}
\end{figure}

To illustrate the concept for the stakeholder graph component we introduce an example stakeholder graph $G$ in Figure~\ref{fig_Component} that contains two components ($G_1$ and $G_2$). The component subgraphs are obtained by partitioning of $G$ along the supplier node $S_2$, which has a degree of two. The components can be analyzed independently for purposes of determining bidding values.

The stakeholder graph provides a natural ordering of stakeholder nodes based on product precedence. Flows are always directed from supply nodes to consumers nodes and pass through transport and technology providers.  The relative positions of stakeholders $u\in\mathcal{A}$ and $v\in\mathcal{R}$ along the graph influence how remuneration is determined from $u$ to $v$. 

The remuneration rate for the activator is obtained by transforming the bid associated with stakeholder $v$ (defined for a given product $p(v)$) into the product basis corresponding to $u$ (given by $p(u)$) . Product transformations are associated with technology providers and are captured by the yield coefficients $\gamma_{t,p}$. We change the product basis by capturing all transformations along the path connecting $u$ and $v$ (i.e., for stakeholders $k\in W$). We denote the yield coefficients along the path $W$ as $\gamma^{gen}_{t(k),p(k)}$ for $p\in\mathcal{P}^{gen}_{t}$ and $\gamma^{con}_{t(k),p(k)}$ for $p\in\mathcal{P}^{con}_{t}$. For the purpose of the basis change, we can view a transport process as a technology with input and output yield coefficients of one (no transformation). If $u$ is upstream of $v$, then the correct product basis change is defined by the quotient of technology yield coefficients as:
\begin{equation}
\begin{aligned}
\frac{1}{\gamma_{v}^{con}}\prod_{k\in W}{\frac{\gamma^{gen}_{k}}{\gamma^{con}_{k}}},
\quad u\in \mathcal{A},v\in\mathcal{R},
\end{aligned}
\end{equation}
in which we note that having defined our stakeholder bids (in particular technology bids) relative to inputs, we exclude the output yield coefficient associated with $v$. In the case that $u$ is downstream of $v$ we have:
\begin{equation}
\begin{aligned}
\prod_{k\in W}{\frac{\gamma^{con}_{k}}{\gamma^{gen}_{k}}},
\quad u\in\mathcal{A},v\in\mathcal{R}.
\end{aligned}
\end{equation}
Without loss of generality, we will proceed by adopting the convention that $u$ is downstream of $v$.

With these definitions and results, we are now in a position to construct the {\em market-activating bid computation} procedure. A stakeholder $v\in\mathcal{R}$ in a market $G$ has an associated bid denoted $\alpha_{v}$ that defines the remuneration rate required for $v$ to participate in the market. The ISO can collect revenue to remunerate $v$ from stakeholders $u\in\mathcal{A}$ in the same graph component $G'\subseteq G$. Let us define $\beta_{u,v}\in[0,1]$ as the fraction remuneration for stakeholder $v$ provided by stakeholder $u$, subject to the criterion that $\sum_{u\in\mathcal{A}}{\beta_{u,v}}=1,\; v\in\mathcal{R}$. Stakeholders $u$ and $v$ are connected by a path $W\in\mathcal{W}(u,v)$. The product basis change along $W$ ultimately defines the bid required from stakeholder $u$ to remunerate stakeholder $v$ and is given by $\underline{\alpha}_{u,v}$, which we denote the partial bid in \eqref{PartialMinBid}.
\begin{equation}
\label{PartialMinBid}
\begin{aligned}
\underline{\alpha}_{u,v} = \beta_{u,v}\prod_{k\in W}{\frac{\gamma^{con}_{k}}{\gamma^{gen}_{k}}}\alpha_{v},\quad u\in\mathcal{A},v\in\mathcal{R}
\end{aligned}
\end{equation}
Combining all revenue commitments $\underline{\alpha}_{u,v}$ from a given stakeholder $u\in\mathcal{A}$ to stakeholders $v\in\mathcal{R}$ determines the market-activating bid for stakeholder $u$: $\underline{\alpha}_{u}$ \eqref{MinBid}. We will refer to $\underline{\alpha}_{u}$ as the stakeholder $u$ market-activating bid and to $\underline{\alpha}_{u,v}$ as its partial market-activating bids.
\begin{equation}
\label{MinBid}
\begin{aligned}
\underline{\alpha}_{u} = \sum_{v\in G'}\beta_{u,v}\prod_{k\in W}{\frac{\gamma^{con}_{k}}{\gamma^{gen}_{k}}}\alpha_{v},\quad u\in\mathcal{A}.
\end{aligned}
\end{equation}

\begin{figure}[!htb]
	\center{\includegraphics[width=0.75\textwidth]{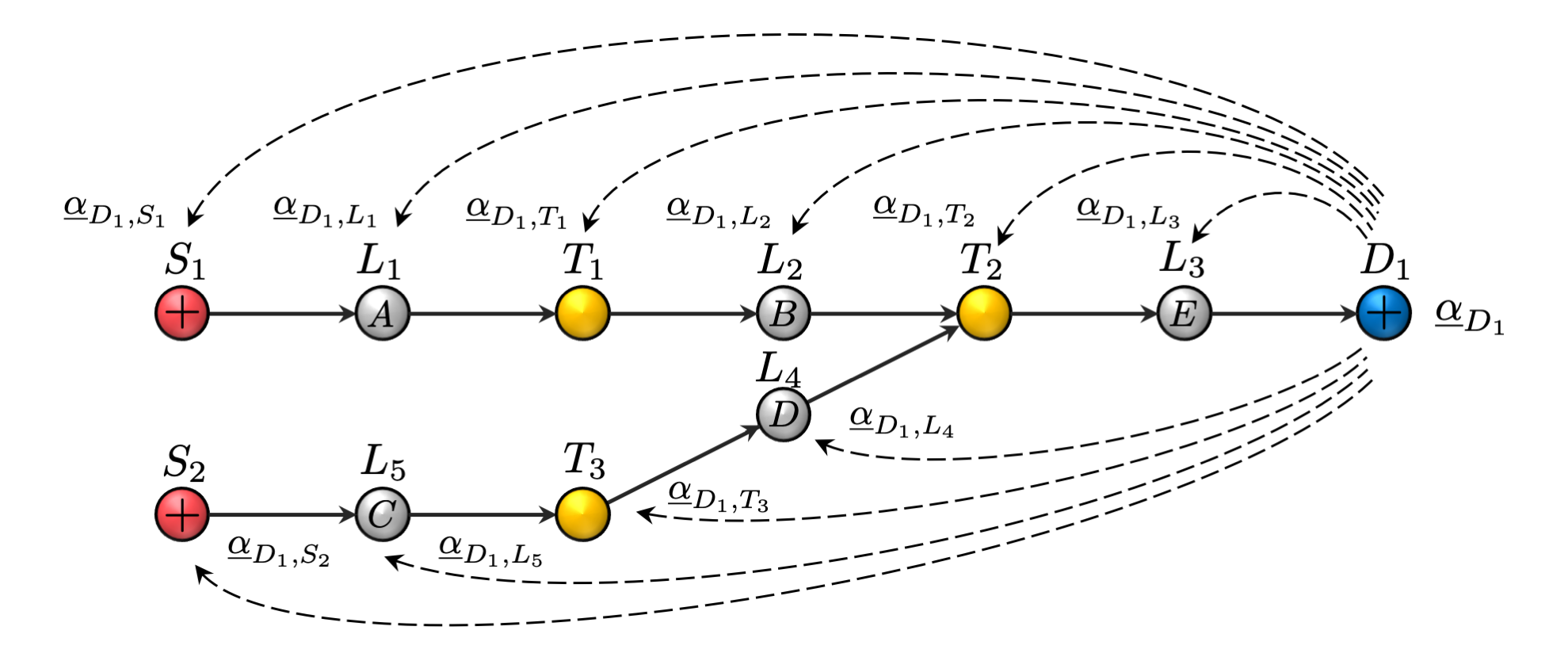}}
	\caption{Breakdown of consumer $D_1$ market-activating bid into partial bids corresponding to each remunerated stakeholder.}
	\label{fig_BidBreakdown}
\end{figure}

We illustrate the breakdown of a market-activating bid using the stakeholder graph component $G_1$ from Figure~\ref{fig_Component}. In Figure~\ref{fig_BidBreakdown}, we assume that $D_1\in\mathcal{D}^{+}$ is the market-activating player (i.e., a consumer with a positive is a source of market revenue) and $S_1,S_2\in\mathcal{S}^{+}$ are the reached players (i.e., suppliers with positive bids demand payment and are revenue sinks). In this setting, all the market revenue is obtained from $D_1$ and this has to be sufficient to activate all players along the suuply chain. The market-activating bid for $D_1$ is $\underline{\alpha}_{D_1}$. The breakdown of this bid into its partial bids corresponding to each remunerated stakeholder is indicated in Figure~\ref{fig_BidBreakdown}. Concretely, the partial bid from $D_1$ to $T_2$ is represented as:
\begin{equation*}
\begin{aligned}
\underline{\alpha}_{D_1,T_2} = {\frac{\gamma_{T_2,B}}{\gamma_{T_2,E}}}\alpha_{T_2},
\end{aligned}
\end{equation*}
Similarly, the remuneration rate to $L1$ is determined as:
\begin{equation*}
\begin{aligned}
\underline{\alpha}_{D_1,L1} = {\frac{\gamma_{T_2,B}}{\gamma_{T_2,E}}}{\frac{\gamma_{T_1,A}}{\gamma_{T_1,B}}}\alpha_{L1},
\end{aligned}
\end{equation*}
illustrating how the yield coefficients in technologies $T_1$ and $T_2$ influence the remuneration rate from $D_1$ to transport provider $L_1$. The yield factors accumulate along the path $W(D_1,L_1)$. The partial bid $\underline{\alpha}_{D_1,S_2}$ follows the path $W(D_1,S2)$, and is determined as
\begin{equation*}
\begin{aligned}
\underline{\alpha}_{D_1,S_2} = {\frac{\gamma_{T_2,D}}{\gamma_{T_2,E}}}{\frac{\gamma_{T_3,C}}{\gamma_{T_3,D}}}\alpha_{S_2},
\end{aligned}
\end{equation*}
where the yield coefficient $\gamma_{T_2,D}$ corresponds to the input product on the correct path. By construction, one product is defined as the reference input for each technology, and has a yield coefficient of one. Technologies with multiple input products will thus have one input with a coefficient of one and others with coefficients $\gamma_{t,p}\in\mathbb{R}_{+}$. 

\subsection{Properties of Market-Activating Bids}{\label{SecProperties}}

\subsubsection{Revenue Distribution}

The value $\beta_{u,v}$ represents the fractional revenue provided by stakeholder $u$ to $v$. We now establish that this fractioning of revenue over stakeholders $u$ provides the correct revenue to stakeholder $v$.
\begin{theorem}
	\label{ThmSharing}
	Consider a stakeholder graph $G$ with activators $\mathcal{A}$ and targets $\mathcal{R}$. Each stakeholder $v\in\mathcal{R}$ is part of one or more paths $W$ connecting to stakeholders $u\in\mathcal{A}$, and there exist $\beta_{u,v}\in[0,1]$ satisfying $\sum_{u\in\mathcal{A}}{\beta_{u,v}}=1$ such that the bids $\underline{\alpha}_{u,v}$ achieve revenue adequacy for stakeholder $v$.
\end{theorem}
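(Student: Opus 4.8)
The plan is to reduce the per-stakeholder claim to the global revenue-adequacy identity of Theorem~\ref{BERevAqc} together with a flow-decomposition argument on the stakeholder graph. First I would fix a target $v\in\mathcal{R}$ with positive cleared flow $x_v$ and pass to the stakeholder graph component $G'$ containing it. Since no product flows between components, the identity displayed in Theorem~\ref{BERevAqc} restricts to $G'$, equating the funds supplied by the activators in $G'$ with the payments owed to the targets in $G'$. A preliminary observation, which also establishes the first assertion of the theorem, is that $G'$ must contain at least one activator: otherwise the target side of that identity is strictly positive (it contains the term $|\alpha_v|x_v>0$) while the activator side vanishes, contradicting the fact that $v$ clears. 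Connectedness of $G'$ then guarantees that $\mathcal{W}(u,v)\neq\emptyset$ for some $u\in\mathcal{A}$, so every cleared target lies on at least one path to an activator.

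Next I would construct the numbers $\beta_{u,v}$. Working under the DAG hypothesis (motivated by Theorem~\ref{consistency}), the cleared allocation on $G'$ admits a flow decomposition into directed supplier-to-consumer paths, each crossed technology rescaling the flow magnitude by $\gamma^{gen}/\gamma^{con}$ relative to its reference product. I would route the remuneration requirement of $v$ back along these decomposed paths toward activators, splitting freely when a path's boundary carries more than one activator and, when a decomposed path has no activator endpoint, re-routing its obligation along an arbitrary graph path in $G'$ to some activator. The balance of component totals noted above is exactly the feasibility condition for this transportation-type assignment (the component being connected, every activator--target pair is joinable). Defining $\beta_{u,v}$ as the fraction of $v$'s requirement assigned to $u$ in this way yields $\beta_{u,v}\in[0,1]$ and $\sum_{u\in\mathcal{A}}\beta_{u,v}=1$ by construction.

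It then remains to check that the partial-bid formula \eqref{PartialMinBid} realizes this routing. The key identity is that along a path $W\in\mathcal{W}(u,v)$ the product $\prod_{k\in W}\gamma^{con}_k/\gamma^{gen}_k$ is the reciprocal of the flow-magnitude scaling accumulated along $W$; this follows by telescoping the conversion constraints \eqref{Conversion} against the nodal balances \eqref{Balance} node by node along $W$, with a transport node contributing a unit factor, and it is consistent with the upstream/downstream case distinction because the extra factor $1/\gamma^{con}_v$ in the upstream formula accounts for $v$'s own conversion. Hence $\underline{\alpha}_{u,v}$ applied at $u$'s cleared rate contributes precisely $\beta_{u,v}$ times $v$'s remuneration, and summing over $u\in\mathcal{A}$ shows that the funds collected from the activators through these partial bids exactly cover $v$'s remuneration; this is revenue adequacy for $v$. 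Summing the per-$v$ statements recovers the global identity of Theorem~\ref{BERevAqc}, confirming consistency.

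I expect the main obstacle to be the bookkeeping at branch points of the DAG --- multi-output or multi-input technologies and nodes with several suppliers or consumers --- where the decomposed flow splits or merges. There one must verify that the single-path product $\prod_{k\in W}\gamma^{con}_k/\gamma^{gen}_k$ still equals the physical conversion ratio along the branch actually followed, that the branch fractions are correctly absorbed into the $\beta_{u,v}$, and that after the possible financial re-routing of target-to-target path obligations the $\beta_{u,v}$ remain in $[0,1]$ and sum to one. The DAG hypothesis is precisely what rules out circular dependencies in this accounting; granting it, the remaining verification is careful but not deep.
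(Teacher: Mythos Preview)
Your proposal is correct and, like the paper, rests on Theorem~\ref{BERevAqc}, but you take a considerably heavier route than the paper does. The paper's argument is almost tautological: it fixes $v$, writes the per-$v$ revenue-adequacy requirement as the single linear equation $\sum_{u\in\mathcal{A}}\alpha_{u,v}\mu_{u}=\alpha_{v}\theta_{v}$ (with $\mu_{u},\theta_{v}$ the cleared allocations), observes that this equation is linearly separable over the activators so the split is degenerate, and then simply \emph{chooses} any $\beta_{u,v}\in[0,1]$ with $\sum_{u}\beta_{u,v}=1$ and \emph{defines} $\alpha_{u,v}$ by $\alpha_{u,v}\mu_{u}=\beta_{u,v}\alpha_{v}\theta_{v}$. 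The identification with \eqref{PartialMinBid} is relegated to a one-line post-proof remark that $\theta_{v}/\mu_{u}=\prod_{k\in W}\gamma^{con}_{k}/\gamma^{gen}_{k}$. In particular, the paper neither invokes the DAG hypothesis nor performs any flow decomposition for this theorem.

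Your flow-decomposition construction, the component argument for existence of an activator, and the telescoping verification of the path-yield formula are all sound and give something the paper's proof does not: an explicit, physically motivated choice of $\beta_{u,v}$ tied to actual cleared flows rather than an arbitrary convex combination. The price is the added bookkeeping you flag at branch points and the need for the DAG assumption, neither of which the paper's bare existence argument requires. Your telescoping step is essentially a careful justification of the ratio $\theta_{v}/\mu_{u}$ that the paper asserts without proof.
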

\begin{proof}
The result follows  from Theorem \ref{BERevAqc}. The total revenue provided to the ISO by the market-activating stakeholders $u\in\mathcal{A}$ must be sufficient to remunerate all cleared stakeholders. Specifically, all revenue allocated to stakeholders $v\in\mathcal{R}$ is collected from stakeholders $u\in\mathcal{A}$. Let us select an arbitrary stakeholder $v\in \mathcal{A}$; we define partial bids $\alpha_{u,v}$ and optimal allocations $\mu_{u}$ for stakeholders $u$ and a service bid $\alpha_{v}$ and optimal allocation $\theta_{v}$ for stakeholder $v$. Revenue adequacy for $v$ is defined:
	\begin{equation*}
	\begin{aligned}
	\sum_{u\in\mathcal{A}}{\alpha_{u,v}\mu_{u}}=\alpha_{v}\theta_{v}.
	\end{aligned}
	\end{equation*}
	Revenue adequacy for stakeholder $v$ is linearly separable over the stakeholders  $u\in\mathcal{A}$ and the division of remuneration between the $u$ is degenerate (multiple divisions exist). We pick $\beta_{u,v}\in[0,1], u\in\mathcal{A}$ satisfying $\sum_{u\in\mathcal{A}}{\beta_{u,v}}=1$ allowing us to rewrite revenue adequacy as:
	\begin{equation*}
	\begin{aligned}
	\alpha_{u,v}\mu_{u}=\beta_{u,v}\alpha_{v}\theta_{v},\quad u\in\mathcal{A}
	\end{aligned}
	\end{equation*}
	Thus revenue adequacy is achieved and $v$ participates in the market for some distribution of remuneration defined by $\beta_{u,v}$.
\end{proof}

We observe that the ratio of $\theta_{v}$:$\mu_{u}$ is equal to the cumulative yield between stakeholders $u$ and $v$. Rearranging results in
	\begin{equation*}
	\begin{aligned}
	\alpha_{u,v}=\beta_{u,v}\frac{\theta_{v}}{\mu_{u}}\alpha_{v},\quad u\in\mathcal{A}.
	\end{aligned}
	\end{equation*}
	
Substitution of the equivalent product of input and output yield ratios for $\frac{\theta_{v}}{\mu_{u}}=\prod_{k\in W}{\frac{\gamma^{con}_{k}}{\gamma^{gen}_{k}}}$ recovers the calculation for the partial market-activating bid calculation $\underline{\alpha}_{u,v}$ in equation~\ref{PartialMinBid}.

We illustrate the distribution of stakeholder remuneration by modifying our example from Figure~\ref{fig_BidBreakdown}. In Figure~\ref{fig_BidDistribution} we now assume $D_1\in\mathcal{D}^{+}$ and $S_1,S_2\in\mathcal{S^{-}}$ where the consumer $D_1$ and suppliers $S_1$ and $S_2$ are all sources of market revenue (market-activating players). Figure~\ref{fig_BidDistribution} depicts the distribution of remuneration between them for technology $T_2$ and transport provider $L_1$. Market-activating remuneration to $T_2$ is defined by:
\begin{figure}[!htb]
	\center{\includegraphics[width=0.75\textwidth]{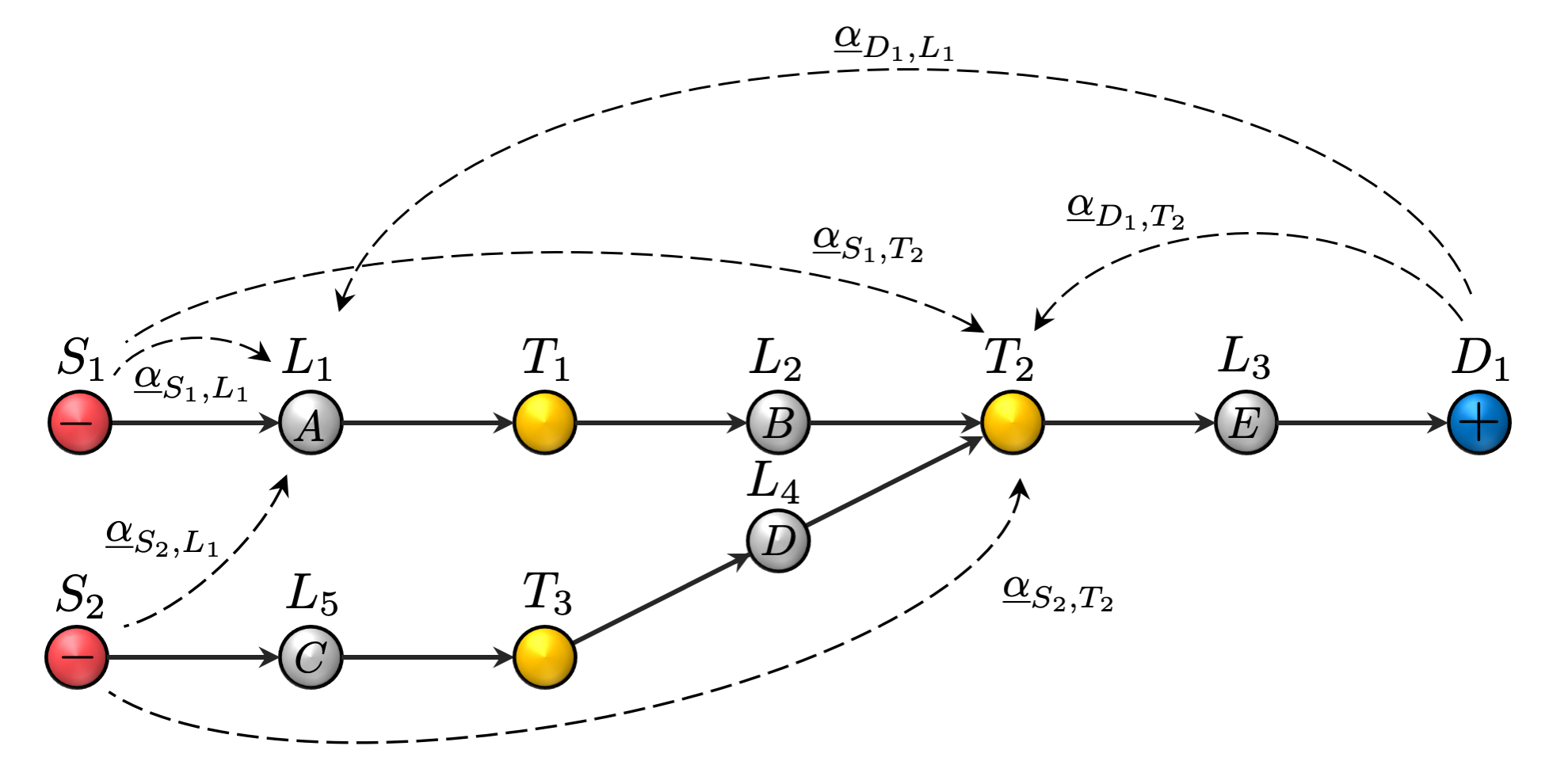}}
	\caption{Breakdown of consumer $D_1$ market-activating bid into partial bids corresponding to each remunerated stakeholder.}
	\label{fig_BidDistribution}
\end{figure}
\begin{equation*}
	\begin{aligned}
	\underline{\alpha}_{D_1,T_2} = \beta_{D_1,T_2}{\frac{\gamma_{T_2,B}}{\gamma_{T_2,E}}}\alpha_{T_2}\\
	\underline{\alpha}_{S_1,T_2} = \beta_{S_1,T_2}{\frac{\gamma_{T_1,B}}{\gamma_{T_1,A}}}\frac{1}{\gamma_{T_2,B}}\alpha_{T_2}\\
	\underline{\alpha}_{S_2,T_2} = \beta_{S2,T_2}{\frac{\gamma_{T_3,D}}{\gamma_{T_3,C}}}\frac{1}{\gamma_{T_2,D}}\alpha_{T_2}\\
	\beta_{D_1,T_2} + \beta_{S_1,T_2} + \beta_{S2,T_2} = 1,
	\end{aligned}
\end{equation*}
The remuneration distribution for stakeholder $L_1$ is more complex and involves computing a remuneration rate based on a path that travels both up and downstream in the stakeholder graph. The bid distribution for $L_1$ is defined by the set of equations
\begin{equation*}
\begin{aligned}
\underline{\alpha}_{D_1,T_2} &= \beta_{D_1,L_1}\frac{\gamma_{T_2,B}}{\gamma_{T_2,E}}\frac{\gamma_{T_1,A}}{\gamma_{T_1,B}}\alpha_{L1}\\
\underline{\alpha}_{S_1,T_2} &= \beta_{S_1,L_1}\alpha_{L_1}\\
\underline{\alpha}_{S_2,T_2} &= \beta_{S_2,L_1}{\frac{\gamma_{T_3,D}}{\gamma_{T_3,C}}}\frac{\gamma_{T_2,B}}{\gamma_{T_2,D}}\frac{\gamma_{T_1,A}}{\gamma_{T_1,B}}\alpha_{L_1}\\
&\beta_{D_1,L_1} + \beta_{S_1,L_1} + \beta_{S_2,L_1} = 1.
\end{aligned}
\end{equation*}
In each case, the $\beta$ coefficients (each in the range $[0,1]$) have a combined total of 1, ensuring that the total revenue received 

\subsubsection{Remuneration Properties over Multiple Pathways}

Equation \eqref{MinBid} implicitly assumes that the stakeholder graph features a single path $W$ between stakeholders $u$ and $v$. Multiproduct supply chains need not conform to this topology. Specifically, multi-input multi-output technologies can give rise to supply chains in which there are multiple paths between the stakeholders in the boundary. Consider the example in Figure~\ref{fig_MultiPath} in which two paths, $W_{B}(D_1,S_1)$ and $W_{D}(D_1,S_1)$ (indexed by the products produced by technology $T_1$ for convenience) are available between stakeholders $D_1$ and $S_1$. How the remuneration rate from $D_1$ to $S_1$ is determined is unclear from the analysis so far.

\begin{figure}[!htb]
	\center{\includegraphics[width=0.8\textwidth]{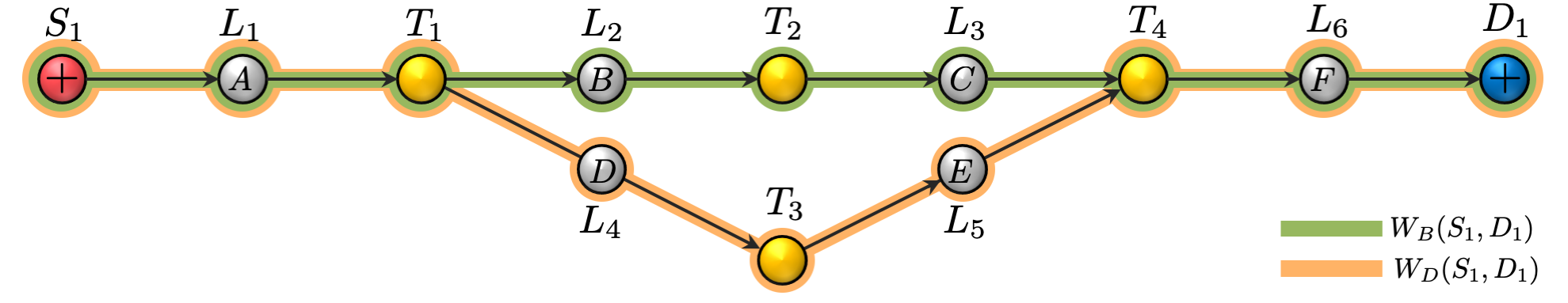}}
	\caption{Illustrative example for multiproduct SC with multiple paths between stakeholders. Two paths between stakeholders $S_1$ and $D_1$ exist, and are highlighted to illustrate the stakeholders included in each.}
	\label{fig_MultiPath}
\end{figure}

To deal with this more complex setting, we define the set of paths between one stakeholder $u$ and a second $v$ as $\mathcal{W}(u,v)$; a subset of all paths. Define the cumulative yield:
\begin{equation*}
\begin{aligned}
\hat{\gamma}_{W}:=\prod_{k\in W}{\frac{\gamma^{gen}_{k}}{\gamma^{con}_{k}}},
\end{aligned}
\end{equation*}
for all paths $W\in\mathcal{W}(u,v)$. The path $W$ which determines the remuneration rate from $u$ to $v$ is the one having the minimum cumulative yield $\hat{\gamma}_{W}$. This result follows from the physical requirements of multi-input multi-output technologies: the minimum $\hat{\gamma}_{W}$ corresponds to the limiting product $p\in\mathcal{P}^{con}_{t}$ among the inputs to a technology $t$ along the path $W$ (we define this product as $\hat{p}$). The supply chain needs to provide enough of this product to satisfy the input requirement at $t$, and other inputs will be provided in excess as a result.

\begin{theorem}
	Assume that $\mathcal{W}(u,v)$ is not a singleton (there exist multiple remuneration paths from stakeholder $u$ to stakeholder $v$). A revenue adequate remuneration rate for $v$ is determined according to the minimum path yield $\hat{\gamma}_{W}$.
\end{theorem}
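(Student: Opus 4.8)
The plan is to reduce the statement to the single-path identity $\underline{\alpha}_{u,v}=\beta_{u,v}(\theta_{v}/\mu_{u})\alpha_{v}$ already obtained after Theorem~\ref{ThmSharing}, and then to compute the allocation ratio $\theta_{v}/\mu_{u}$ when $\mathcal{W}(u,v)$ is not a singleton. First I would fix the cleared allocation $\mu_{u}$ of the activator $u$ and ask how large the cleared allocation $\theta_{v}$ of the target $v$ must be. Along any single connecting path $W$, the conversion constraints~\eqref{Conversion} chained through the nodal balances~\eqref{Balance} give $\theta_{v}\,\hat{\gamma}_{W}=\mu_{u}$, where $\hat{\gamma}_{W}=\prod_{k\in W}\gamma^{gen}_{k}/\gamma^{con}_{k}$ and transport providers are treated as unit-yield nodes. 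Adopting the convention stated in the text that $u$ is downstream of $v$, I would argue from~\eqref{Balance} that $\theta_{v}$ must be large enough to feed \emph{every} path simultaneously, so $\theta_{v}\,\hat{\gamma}_{W}\geq\mu_{u}$ for all $W\in\mathcal{W}(u,v)$, hence $\theta_{v}=\mu_{u}/\min_{W\in\mathcal{W}(u,v)}\hat{\gamma}_{W}$ at an efficient clearing.

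The second step is the structural core: explaining why the binding path is the one of minimum cumulative yield. Multiple paths from $u$ to $v$ exist only because the chain passes through one or more multi-input/multi-output technologies $t$ where paths diverge and later reconverge. At such a $t$, the conversion constraints consume the inputs in the fixed proportions $\gamma_{t,p}:\gamma_{t,p'}$, while the upstream sub-chain generally produces them in different proportions; the product balance at $t$ can then hold only if one input — the \emph{limiting} product $\hat{p}$ — is supplied exactly and the others in excess. Using the same yield-ratio bookkeeping that defines $\hat{\gamma}_{W}$, I would show that the sub-path carrying $\hat{p}$ is precisely the one with the smallest $\hat{\gamma}_{W}$, so requiring enough of $\hat{p}$ is what forces $\theta_{v}=\mu_{u}/\min_{W}\hat{\gamma}_{W}$; the excess on the other inputs is harmless because additional consumers/suppliers (or slack capacity) absorb it without changing $v$'s remuneration. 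For the two-path example of Figure~\ref{fig_MultiPath} this is a one-line computation (compare $\gamma_{T_{2},B}/\gamma_{T_{1},B}$ with $\gamma_{T_{2},D}/\gamma_{T_{1},D}$), which I would present as the base case.

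Finally I would close the revenue-adequacy loop. Substituting $\theta_{v}/\mu_{u}=1/\min_{W\in\mathcal{W}(u,v)}\hat{\gamma}_{W}$ into the partial-bid identity~\eqref{PartialMinBid} gives $\underline{\alpha}_{u,v}=\beta_{u,v}\,\alpha_{v}/\min_{W\in\mathcal{W}(u,v)}\hat{\gamma}_{W}$ with $\sum_{u\in\mathcal{A}}\beta_{u,v}=1$; summing over $u$ yields $\sum_{u\in\mathcal{A}}\underline{\alpha}_{u,v}\mu_{u}=\alpha_{v}\theta_{v}$, which is exactly the per-stakeholder revenue-adequacy condition used in Theorems~\ref{BERevAqc} and~\ref{ThmSharing}. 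Hence the remuneration rate for $v$ obtained from the minimum path yield is revenue adequate, as claimed.

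The hard part will be making the ``feed every path simultaneously'' argument rigorous when several multi-input technologies are nested, so that paths diverge, reconverge, and diverge again. The clean route is an induction on the DAG layering assumed earlier in this section: processing layers from $v$ toward $u$, the invariant is that the allocation required at each layer equals the downstream requirement divided by the minimum cumulative yield over the relevant partial paths, and the induction step only needs the elementary fact that this ``divide by the minimum product of yield ratios'' rule composes correctly under path concatenation at a branching technology. Care is also needed that transport providers (unit yield) drop out of $\hat{\gamma}_{W}$ and that the up/downstream convention is applied symmetrically in the reversed case.
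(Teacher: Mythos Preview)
Your proposal is correct and arrives at the same conclusion as the paper, but the route differs in structure. The paper gives a short proof by contradiction: it supposes a non-minimal path yield $\hat{\gamma}_{W}$ is used to set $\underline{\alpha}_{u,v}=\alpha_{v}/\hat{\gamma}_{W}$, then invokes the physical allocation ratio $\theta_{v}/\mu_{u}=1/\hat{\gamma}^{*}_{W}$ (with $\hat{\gamma}^{*}_{W}$ the minimum) as a known fact, and obtains $\underline{\alpha}_{u,v}>\underline{\alpha}_{u,v}$. Your argument is direct: you derive the inequality $\theta_{v}\hat{\gamma}_{W}\geq\mu_{u}$ for every $W$ from the balance and conversion constraints, identify the binding path with the limiting input at a multi-input technology, conclude $\theta_{v}/\mu_{u}=1/\min_{W}\hat{\gamma}_{W}$, and substitute into the partial-bid identity. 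The paper's proof is shorter because it simply asserts the physical relationship that you take pains to justify; your version supplies the missing justification and the induction on DAG layers you outline is exactly what would be needed to make that step rigorous for nested branching. Either way, the crux is the same identity $\theta_{v}/\mu_{u}=1/\hat{\gamma}^{*}_{W}$; the paper treats it as given, you build it.
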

\begin{proof}
The proof is by contradiction. Assume that we have stakeholders $u$ and $v$ connected by a set of paths $\mathcal{W}(u,v)$, and that the path yields $\hat{\gamma}_{W}$ are known for each path $W$. Also assume that there is a unique minimum path yield $\hat{\gamma}^{*}_{W}$ with corresponding path $W^{*}$. Now, let us select some other path $W$ with yield $\hat{\gamma}_{W}$, and calculate remuneration rates for stakeholder $v$ according to this path. Now claim that this remuneration rate is sufficient to incentivize $v$ to participate in the market  (i.e., leads to adequate remuneration). We proceed to show that this will result in a contradiction. Further, assume that $u$ is the sole source of revenue in the market and $\beta_{u,v}=1$ as a simplification. From the definition of path yields, we express the remuneration rate from $u$ to $v$ as:
	\begin{equation*}
	\begin{aligned}
	\underline{\alpha}_{u,v} = \frac{1}{\hat{\gamma}_{W}}\alpha_{v},
	\end{aligned}
	\end{equation*}
	Define the optimal allocations $\mu_{u}$ and $\theta_{v}$, for the stakeholders $u$ and $v$ and observe that revenue adequacy for stakeholder $v$ is:
	\begin{equation*}
	\begin{aligned}
	\underline{\alpha}_{u,v}\mu_{u} = \alpha_{v}\theta_{v},
	\end{aligned}
	\end{equation*}
	which we can express as
	\begin{equation*}
	\begin{aligned}
	\underline{\alpha}_{u,v} = \frac{\theta_{v}}{\mu_{u}}\alpha_{v}=\frac{1}{\hat{\gamma}^{*}_{W}}\alpha_{v},
	\end{aligned}
	\end{equation*}
	since this physical relationship is known. We have stated that $\hat{\gamma}^{*}_{W}<\hat{\gamma}_{W}$ by assumption, and thus
	\begin{equation*}
	\begin{aligned}
	\frac{1}{\hat{\gamma}^{*}_{W}}\alpha_{v}>\frac{1}{\hat{\gamma}_{W}}\alpha_{v}
	\end{aligned}
	\end{equation*}
	implying
	\begin{equation*}
	\begin{aligned}
	\underline{\alpha}_{u,v}>\underline{\alpha}_{u,v},
	\end{aligned}
	\end{equation*}
	which is a contradiction (the only remuneration rate that does not arise in a contradiction is that determined by $\hat{\gamma}^{*}_{W}$).

\end{proof}

The definition of path yield allows us to express market-activating bid calculations in the presence of multiple paths $W\in \mathcal{W}(u,v)$ as: 
\begin{equation}
\label{MultiPartialMinBid}
\begin{aligned}
\underline{\alpha}_{u,v} = \beta_{u,v}\max_{W\in \mathcal{W}(u,v)}{\prod_{k\in W}{\frac{\gamma^{con}_{k}}{\gamma^{gen}_{k}}}}\alpha_{v},\quad u\in\mathcal{A}, \, v\in \mathcal{R}.
\end{aligned}
\end{equation}
We observe that this generalizes the expression in \eqref{PartialMinBid} to multiple paths. For example, consider the SC in Figure~\ref{fig_MultiPath}. Assume $\hat{\gamma}_{B}=1$, and $\hat{\gamma}_{D}=0.5$, again using products to distinguish between the two paths. Regardless of the individual technology yield coefficients, $\hat{\gamma}_{D}$ tells us that we will require twice the supply of product $A$ from supplier $S_1$ on the path $W_{D}(D_1,S_1)$ than is required along the path $W_{B}(D_1,S_1)$, and that $S_1$ must be remunerated accordingly. This result also tells us that there will be an excess of product $B$. Since product $A$ is the precursor to both products $B$ and $D$, the supplier $S_1$, transport provider $L1$, and technology $T_1$ must all be remunerated according to the amount of product $A$ required on the path $W_{D}(D_1,S_1)$ determined by the path yield $\hat{\gamma}_{D}$.

\section{Case Study}\label{sec:problem}

\subsection{Problem Setting}
We illustrate the concepts discussed using a municipal solid waste (MSW) case study for a small-sized city of 100,000 citizens. These citizens are suppliers that pay for a waste collection service that transports their waste to a local landfill. Several recycling (processing) services exist within the city that convert specific waste products into value-added products. There is a sorting process available to separate waste streams and recycling services may purchase sorted waste as a feedstock from this service. The landfill charges a tipping fee to dispose of sorted or unsorted waste. Also included in the system is an electrical power producer that supplies electricity to city residents and to the recycling service providers. This setting defines a waste management economy in which recycling technologies provide a route to divert waste from landfills. We model this system as a multiproduct supply chain and use our coordinated market interpretation to investigate incentives that can activate recycling services and that foster landfill diversion. These results may be of interest to policymakers in similarly sized cities (e.g., the City of Madison, Wisconsin has a reported population on the order of 260,000 people). We provide a perspective on how to apply our insights to such settings. 

The US EPA estimates that the average citizen is responsible for the generation of 4.4 lbs of MSW per day~\cite{EPAMSW}. Globally, it is estimated that 1.5 billion tonnes of MSW are landfilled annually, producing widespread impacts (e.g., greenhouse gas emissions, leaching, land use). Encouraging MSW recycling can stimulate local economies and prevent environmental impacts associated with landfills\cite{EPASusReport}. Unfortunately, establishing waste markets that are financially sustainable is nontrivial due to conflicting interests of the stakeholders involved and due to the inherently low value of waste streams and recycled products. For instance, Vossberg and co-workers demonstrate that the market for recycled glass can be difficult to maintain because the recycling process (involving transportation and processing) is more expensive than the production of virgin glass~\cite{Vossberg2014}. Similar obstacles are encountered for other waste streams (e.g., food waste and plastic).

The EPA provides estimates for the composition of MSW consisting of nine categories by weight percent~\cite{EPAMSW}. For the purposes of this study the EPA's "food" and "lawn" categories are combined into "organic waste," and the categories "rubber, leather, and textiles," "wood," and "other" are grouped as "non-recyclable." The remaining categories are paper, glass, metals, and plastics, which are preserved. The resulting solid waste composition is presented in Table \ref{WasteData} and this includes the mass production rates corresponding to the selected population. Here we also include the possible recycled products that can be produced from each waste product and the index assigned to each product for the purposes of modeling notation.
\begin{table}.
	\centering
	\caption{Breakout of municipal waste composition for MSW case study.}
	\label{WasteData}
	\resizebox{\textwidth}{!}{%
	\begin{tabular}{cccccc}
		\hline
		\textbf{Waste category} & \textbf{Waste Label} & \textbf{Mass Fraction (\%)} & \textbf{Annual Production (tonne/yr)} & \textbf{Derived Products} & \textbf{Product Label}\\
		\hline
		Paper & P\textsubscript{01} & 27.0 & 19669 & Recycled Paper & P\textsubscript{1}\\
		\hline
		Glass & P\textsubscript{02} & 4.5 & 3278 & Recycled Glass & P\textsubscript{2}\\
		\hline
		Metals & P\textsubscript{03} & 9.1 & 6629 & Recycled Metals & P\textsubscript{3}\\
		\hline
		Plastics & P\textsubscript{04} & 12.8 & 9324 & Recycled Plastics & P\textsubscript{4}\\
		\hline
		Organic & P\textsubscript{05} & 28.1 & 20470 & Compost & P\textsubscript{5}\\
		\hline
		Non-recyclable & P\textsubscript{06} & 18.5 & 13477 & - & -\\
		\hline
		Total & P\textsubscript{0} & 100 & 72847 & - & -\\
		\hline
	\end{tabular}}
\end{table}

All process technology data are presented in Table \ref{ProcessData}. Recycling technologies for paper, glass, metal, and plastic wastes with associated yields and operating costs are obtained from Mohammadi \textit{et al.}~\cite{Mohammadi2019} and Santibanez \textit{et al.}~\cite{Santibanez2013}. Process details for organic waste composting are obtained from van Haaren \textit{et al.}~\cite{vanHaaren2010}. Yield factors for process electricity requirements are obtained from literature sources. Input and output process yields are combined into a conversion equation relating all inputs and outputs to a reference input such that the reference input has a yield coefficient of one.
\begin{table}.
	\centering
	\caption{Recycling process data for MSW case study. Yield values are in tonnes (except for electricity which is indicated in kWh).}
	\label{ProcessData}
	\resizebox{\textwidth}{!}{%
	\begin{tabular}{cccccccc}
		\hline
		\textbf{Process} &\textbf{Label}&\textbf{Inputs} & \textbf{Outputs} & \textbf{Input Yield} & \textbf{Output Yield} & \textbf{Operating Cost (USD)} & \textbf{Capacity (tonne)}\\
		\hline
		Separation~\cite{EPAMSW} & T\textsubscript{0} & P\textsubscript{0} & P\textsubscript{01}, P\textsubscript{02}, P\textsubscript{03}, P\textsubscript{04}, P\textsubscript{05}, P\textsubscript{06} & 1.00 & 0.270, 0.045, 0.091, 0.128, 0.281, 0.185 & 235.00~\cite{Santibanez2013} & 72847\\ 
		\hline
		Paper recycling & T\textsubscript{1} & P\textsubscript{01}, P\textsubscript{E} & P\textsubscript{1}, P\textsubscript{06} & 1.00, 300~\cite{Merrild2008} & 0.84,0.16 & 210.90 & 19669\\
		\hline
		Glass recycling & T\textsubscript{2} & P\textsubscript{02}, P\textsubscript{E} & P\textsubscript{2} & 1.00, 175.50~\cite{Vossberg2014} & 1.00 & 39.06 & 3278\\
		\hline
		Metal recycling & T\textsubscript{3} & P\textsubscript{03}, P\textsubscript{E} & P\textsubscript{3} & 1.00, 2960~\cite{Duflou2015} & 1.00 & 1300.00 & 6629\\
		\hline
		Plastic recycling & T\textsubscript{4} & P\textsubscript{04}, P\textsubscript{E} & P\textsubscript{4}, P\textsubscript{06} & 1.00, 438~\cite{Hur2003} & 0.672, 0.328  & 364.50 & 9324\\
		\hline
		Composting~\cite{vanHaaren2010} & T\textsubscript{5} & P\textsubscript{05}, P\textsubscript{E} & P\textsubscript{5} & 1.00, 8.41 & 0.80  & 21.00 & 20470\\
		\hline
	\end{tabular}}
\end{table}

The drivers of the coordinated market model are the supply of waste streams and consumer demand for recycled products. For the purposes of this case study, our interest lies in municipal-scale transactions; a single supplier and consumer are modeled for each product, and the clearing of that transaction is synonymous with the clearing of the entire market for that product. A simple SC is defined consisting of a single node representing the city location ($N_{1}$), a node for the separation center ($N_{2}$), a node for the landfill ($N_{3}$), a node for the recycling service providers ($N_{4}$), and a node for the electrical power provider ($N_{5}$). The waste separation center splits MSW into its component streams which can be purchased by recycling service providers as process inputs. The landfill acts as a consumer that places a negative bid cost for municipal waste, indicating that it will take the city waste at a cost (tipping fee). Finally, recovered products from wast can be sold back to the city, which acts as a consumer. Note that the city acts as a supplier that offers MSW and as a consumer that requests electricity and recycled products (the city acts as two separate stakeholders). This notion is important, because a circular economy that fosters product recycling does not imply that there is a product cycle in the SC graph. The node-based and product-based representation of the MSW SC are presented in Figure~\ref{fig_network}. One of the advantages of the product-based representation  is that we are able to clearly represent material flows independently of the geographical arrangement of the stakeholders. Moreover, it is also clear that the SC does not contain technology cycles. The product-based representation provides a structure to manage SC complexity.
\begin{figure}[!htb]
	\center{\includegraphics[width=0.9\textwidth]{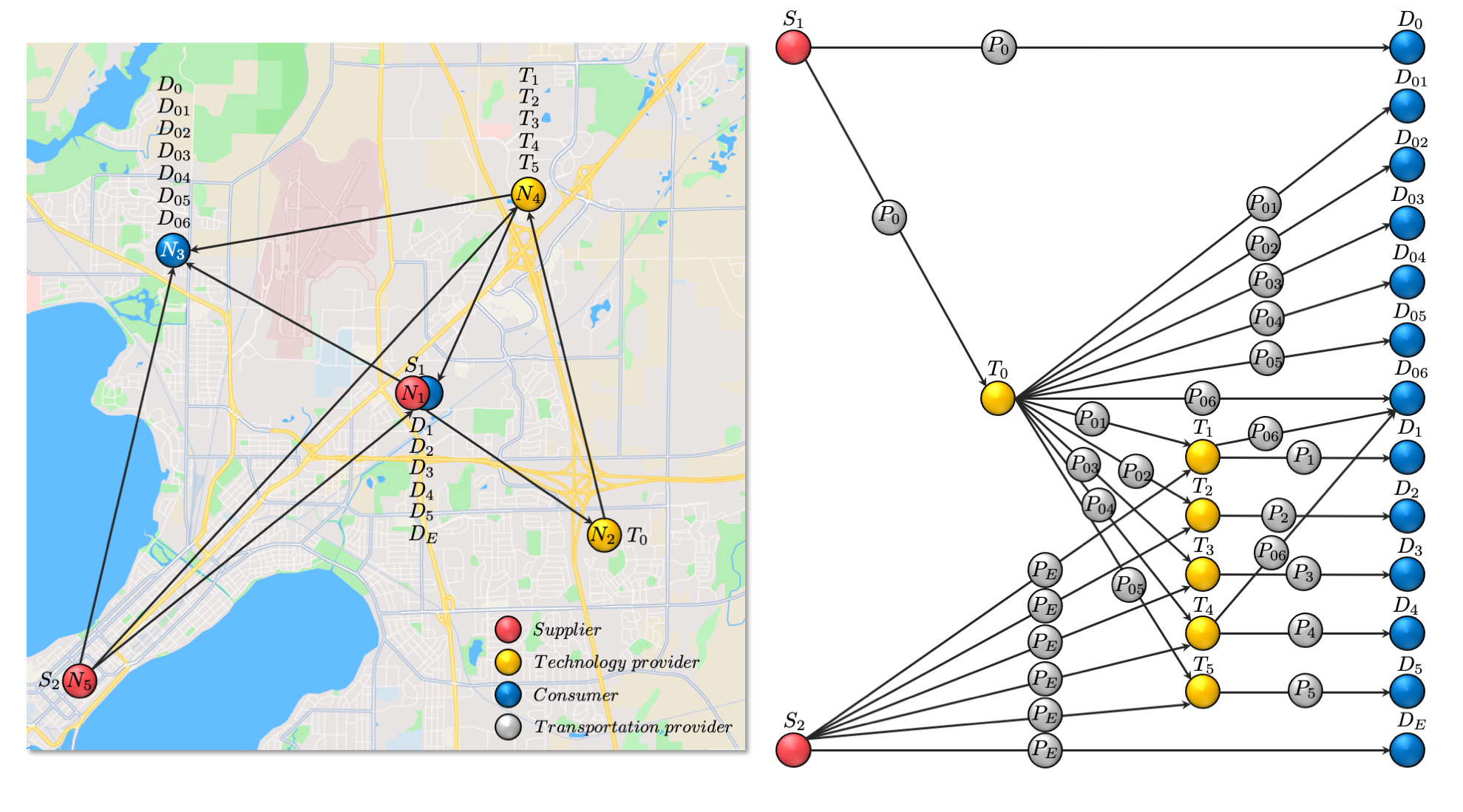}}
	\caption{Node-based SC (left) and product-based stakeholder graph (right) for illustration problem; stakeholders are abstracted into a DAG and are organized such that all transport flows are directed left-to-right following proper paths. This behavior is not clear in the SC map. Products carried by transporters are indicated on transportation nodes.}
	\label{fig_network}
\end{figure}

Stakeholder bid costs should provide incentives for the market to exist (for transactions to occur). Here, we assume that city residents offer a bid of -321 USD per tonne of MSW taken away (the negative value implying that they pay for this service)~\cite{HAweb}. Transportation costs for waste products, recycled products, and for compost are based on those in Garibay-Rodriguez \textit{et al.} and are applied on a per-tonne basis and according to product type~\cite{Garibay2018}. The separation service bid cost  is 235 USD per tonne~\cite{Santibanez2013}. The landfill service bid (tipping fee) is -57.12 USD per tonne~\cite{EREF2017} based on average US landfill fees (the negative bid value indicating the consumer requests payment to consume waste). The bids for all recycling service providers are taken to be their operating cost values listed in Table \ref{ProcessData}. Supply-side bids have been obtained or estimated using literature data; the problem remains to determine a set of demand bids which activate the MSW market and successfully divert waste from the landfill. The market-activating bids represent the lowest cost that the city residents need to pay for a given recycled product (recycled paper, glass, metal, plastic, compost, and electricity) to activate a market. This analysis seeks to reveal key recyclable products that drive the market.

The capacities associated with product supply and demand bids are presented in Table~\ref{CapacityData}; these values represent the maximum amount of a product that a consumer is willing to buy and a supplier is willing to provide. They are interpreted as upper limits on transaction volumes rather than provisions to be satisfied. The table is divided into demand, supply, and process capacity columns to indicate the products that are desired in the market and those that are available, as well as the transformation capacities. Landfill demand capacities are assumed to be sufficient to absorb all waste produced by the city. Additionally, the electrical grid is assumed to be able to provide sufficient power to satisfy both the city and recycling service provider requirements. Electricity consumption rates are estimated using EIA data~\cite{EIAFAQ} and supplier bid levels~\cite{EIA_Price}.

\begin{table}
	\centering
	\caption{Demand and supply capacity data; capacity values are in tonne per year except for electricity (see $S_{2}$ and $D_{E}$) which are indicated in GWh per year}
	\label{CapacityData}
	\begin{tabular}{cccrrr} 
		\hline
		\textbf{Stakeholder}&﻿\textbf{Node} & \textbf{Product} & \textbf{\makecell{Supply\\Capacity}} &\textbf{\makecell{Demand\\Capacity}} & \textbf{\makecell{Process\\Capacity}}\\\hline
		City ($S_{1}$)&$N_{1}$& P\textsubscript{0} &72,847& - &-\\\hline
		Electrical grid ($S_{2}$)&$N_{5}$& P\textsubscript{E} &1,070& - &-\\\hline
		City ($D_{1}$)&$N_{1}$& P\textsubscript{1} & - &16,522&-\\\hline
		City ($D_{2}$)&$N_{1}$& P\textsubscript{2} & - &3,279&-\\\hline
		City ($D_{3}$)&$N_{1}$& P\textsubscript{3} & - &6,630&-\\\hline
		City ($D_{4}$)&$N_{1}$& P\textsubscript{4} & - &6,267&-\\\hline
		City ($D_{5}$)&$N_{1}$& P\textsubscript{5} & - &16,376&-\\\hline
		City ($D_{E}$)&$N_{1}$& P\textsubscript{E} & - &1,040&-\\\hline
		Landfill ($D_{0}$)&$N_{3}$& P\textsubscript{0} & - &72,847&-\\\hline
		Landfill ($D_{01}$)&$N_{3}$& P\textsubscript{01} & - &19,669&-\\\hline
		Landfill ($D_{02}$)&$N_{3}$& P\textsubscript{02} & - &3,278&-\\\hline
		Landfill ($D_{03}$)&$N_{3}$& P\textsubscript{03} & - &6,629&-\\\hline
		Landfill ($D_{04}$)&$N_{3}$& P\textsubscript{04} & - &9,324&-\\\hline
		Landfill ($D_{05}$)&$N_{3}$& P\textsubscript{05} & - &20,470&-\\\hline
		Landfill ($D_{06}$)&$N_{3}$& P\textsubscript{06} & - &20,630&-\\\hline
		Separator ($T_{0}$)&$N_{2}$& P\textsubscript{0} & - & - &72,847\\\hline
		Paper recycling ($T_{1}$)&$N_{4}$& P\textsubscript{01} & - & - &19,700\\\hline
		Glass recycling ($T_{2}$)&$N_{4}$& P\textsubscript{02} & - & - &3,278\\\hline
		Metal recycling ($T_{3}$)&$N_{4}$& P\textsubscript{03} & - & - &6,629\\\hline
		Plastic recycling ($T_{4}$)&$N_{4}$& P\textsubscript{04} & - & - &9,324\\\hline
		Composting ($T_{5}$)&$N_{4}$& P\textsubscript{05} & - & - &2,0470\\\hline
	\end{tabular}
\end{table}

\subsection{Market Activating Bids}

Our results consist of nine case studies in which we vary the city demand bids for the five recycled products (paper, glass, metal, plastic, and compost, corresponding to the demands $D_{1}$ to $D_{5}$ in Table~\ref{CapacityData}). The demand bid for electricity is generally maintained at the Energy Information Administration (EIA) value including transmission costs. The responses of interest in these cases are stakeholder profits (presented in aggregate for transport services) product volumes exchanged, and product clearing prices. This information is collected in Table~\ref{CaseStudyResults}. Each column in the table corresponds to one scenario and includes the objective function value (representing the scenario social welfare) the bid values corresponding to the recycled product demands $D_{1}$ to $D_{5}$ and the city electricity demand $D_{E}$. We report the profits associated with demand, supply, transport, and technology stakeholders, demand allocations, and the clearing prices at the city node $N_{1}$. We also calculate the percentage of waste recycled in each scenario, which is interpreted as the annual waste diversion from landfill disposal. The nine presented cases are selected to demonstrate properties of the coordinated market framework, or to provide contrast with scenarios in which stakeholders are forced to participate in a market. The cases are numbered for reference. We discuss the motivation for and observations resulting from each case in turn.

Case 1 demonstrates the use of the market clearing framework using a selection of bids sufficiently high to activate all the recycling pathways. The bid values are set sufficiently high (either 1,500 USD per tonne, or 2,000 USD per tonne) that each recycling pathway is activated, but because we do not apply a systematic approach to bid selection in this case, market profits (and social welfare) are large. In this case, the electricity supplier, separation technology, and compost consumer all have large associated profit values. Note that in the case of the consumer, its profit is counted as money it did not need to spend (meaning that the market price for $P_{5}$ (compost) is lower than the $D_{5}$ bid. Checking the table confirms this observation; the bid for compost is 1,500 USD per tonne, while the market price for compost is only 43.70 USD per tonne. Clearly, the bid of 1,500 USD per tonne is inefficient in the face of this market price. This case illustrates an important property of the market clearing framework: profit allocation is degenerate (has multiple solutions), and the ISO does not always produce a ``fair" allocation of profits to stakeholders \cite{Sampat2018A}. In this case, profits are allocated to four stakeholders, and all the others get nothing. Issues associated with fairness will be explored in future work. 

Moving on to case 2, we decide to lower stakeholder bids in an attempt to obtain a more efficient set of market prices. Here, we set (again, with no real methodology to our choice) recycled product bids to 500 USD per tonne, and we also decrease the city's bid for electrical power to 0.10 USD per kWh. The outcome of this bidding setting is that the {\em recycling market is completely dry}; all waste is shipped to the landfill, and moreover, city residents are without power as well. The clearing prices for recycled products satisfy the bounds (the bids) we set, but these bids are too low to activate the markets.

In case 3, we will attempt to activate markets using a {\em constraint that forces all the recycling technologies to be active}. To be specific, we constrain the demands $D_{1}$ to $D_{5}$ and $D_{E}$ to be fulfilled to their capacity values. In more realistic terms, the city municipal officials might mandate that waste is to be recycled (to achieve landfill diversion). Further, since policy has been used to obtain the solution we wanted, we will not increase our bids from the 500 USD per tonne level, nor from the 0.10 USD per kWh level for power. From the allocation data in the table, we see that {\em this approach achieves the same results as case 1}. Specifically, we have achieved 71\% annual landfill diversion (a fair degree of circularity, based on diversion alone) and at a lower bid cost, too. However, examination of stakeholder profits reveals that three of the recycling technologies, as well as the city power supplier, are operating at a loss (with large negative profits). By constraining the market model to force the participation of certain stakeholders, we allow the ISO to allocate negative profits, and lose a useful model property, as we determined will occur in Section \ref{sec: forced markets}. In other words, our policy solution has resulted in stakeholders operating at net annual loss. This is not a financially sustainable strategy for long-term waste management; presumably the recycling services will soon go out of business because they are not profitable (or would need to be subsidized externally). This result confirms our observations that forcing market participation through constraints destroys the economic properties of the market clearing model. 

Cases 1 to 3 motivate the need for an approach to determine an economically sound mechanism to activate markets. We thus apply our market-activating bid approach to generate further results. In case 4, we use our approach to determine the market activating bids $\underline{\alpha}^{d}_{p}$ according to the procedure outlined in section \ref{MinClearBids} for consumers $D_{1}$ to $D_{5}$ such that each consumer bids enough to cover the processing and transportation costs associated with the product; e.g., the market activating bid associated with $D_{1}$ is calculated based on technology $T_{1}$, the disposal cost for non-recyclable waste resulting from $T_{1}$, and the transportation costs for product $P_{1}$. We repeat this calculation for each of $D_{1}$ to $D_{5}$, and present the results in column 4 as partial market activating bids. Solving the clearing model, we observe that these bids result in a dry market once again, because in calculating them we did not account for the cost of separation, and the city waste disposal bid is not sufficient to completely cover the separation cost, either. Since there is not sufficient revenue available to the ISO to activate the market, the outcome is dry. Nevertheless, this result provides an opportunity: we use Theorem \ref{ThmSharing} to distribute the cost of waste separation between the portion covered by the city's waste disposal bid and any of the recycled product consumers that we choose. To illustrate how this selection will influence the market we choose to calculate the additional bidding value required from each stakeholder $D_{1}$ to $D_{5}$ in turn, and we present these as cases 5 to 9 in which $D_{1}$ to $D_{5}$ cover the remaining cost of separation. These columns are labeled as "covering cases" in the table for shorthand. We elect to put the burden of the separation cost on the recycled products rather than increasing the city bid for waste disposal because it allows us to demonstrate distribution of remuneration to multiple stakeholders. To illustrate the market activating bids, we demonstrate the calculations required to determine the bid for the demand $D1$ in case 5 in Figure~\ref{fig_DemoCalcs}. This figure illustrates the calculations leading to values of 66.30 USD per tonne MSW for city, and for the bid of 1165.62 USD per tonne paper in case 5.
\begin{figure}[!htb]
	\center{\includegraphics[width=0.9\textwidth]{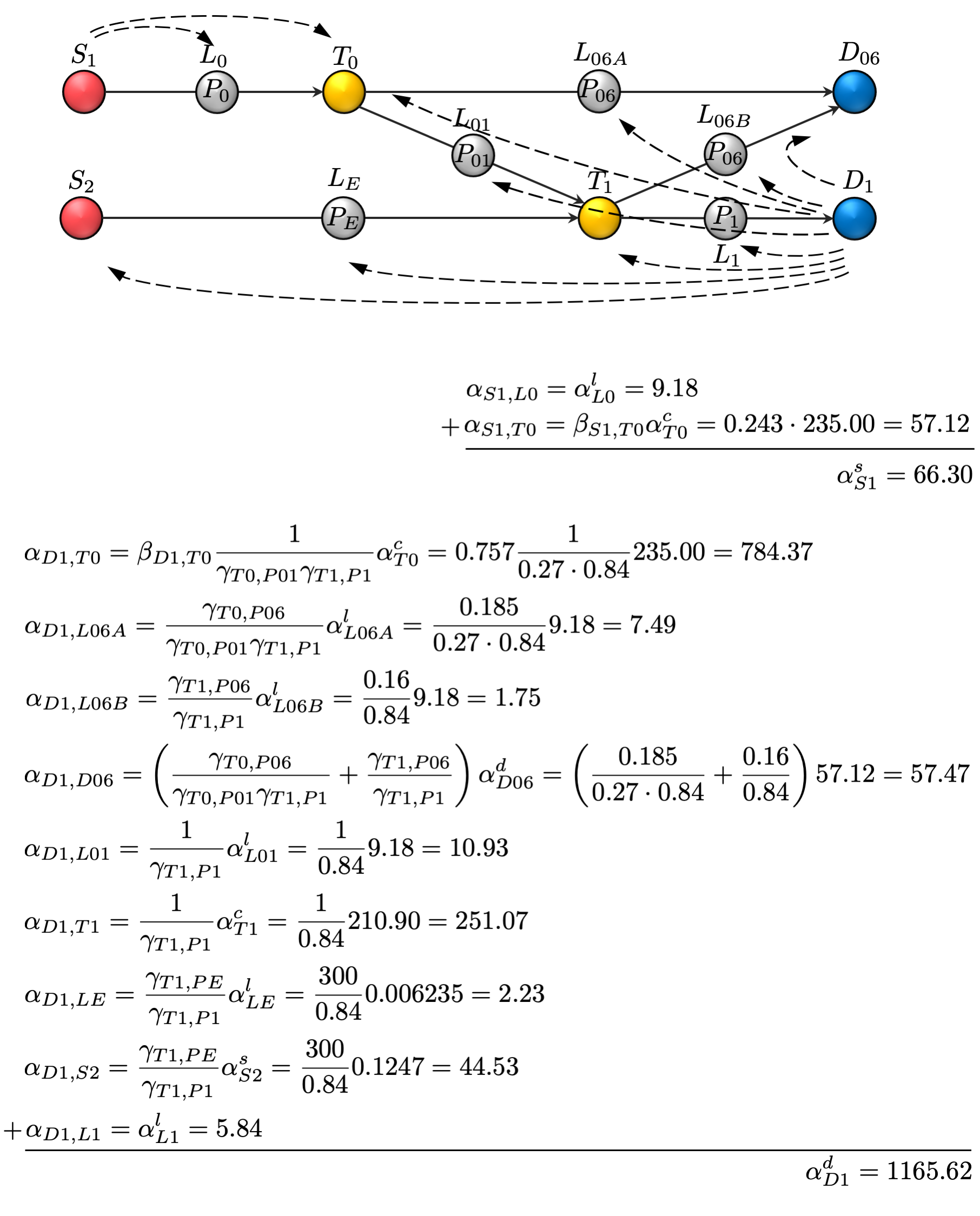}}
	\caption{Calculation procedure for the $D1$ market-activating bid. The relevant SC substructure from Figure~\ref{fig_network} is included for illustration with arrows indicating the relevant remuneration from the supplier $S_1$ and the consumer $D_1$. Note that both stakeholders share remuneration of technology $T_0$}
	\label{fig_DemoCalcs}
\end{figure}

We present the bidding and solution data in cases 5 to 9 in table~\ref{CaseStudyResults}. We emphasize the covering stakeholder bid in bold, while the other bids remain the same. We note that all five of these cases result in cleared markets, and in each case all recycling markets are active, so the annual diversion rate is 71\% across the board. Importantly, any profits generated in these cases are small. Consider, for example, the profit of 69.87 USD allocated to the separation service in case 5; this value is small relative to the 78,247 tonnes of waste separated. Recalling our definition of profit from \eqref{Profits}, the price bounding properties in Section \ref{sec: properties}, and the revenue adequacy interpretation of market-activating bids in Theorem~\ref{BERevAqc}, we conclude that achieving small profits is synonymous with bounding market prices near stakeholder bids. Examination of the market prices in the results demonstrates that we have achieved our goal; our market-activating bid methodology results in bids that bound market prices closely; in our results the bid values we present are equal to the prices observed at least to the second decimal place (we need to allow a small profit to  exist to break social welfare degeneracy with the dry case). An important interpretation of these results is that when we say that profits are small, we mean profits over and above the remuneration rate defined by stakeholder bids. Cases 5 to 9 are characterized by small profits. This does not mean that stakeholders are not making money. This means that stakeholders are paying (or being paid) the amount they bid. These results demonstrate the revenue adequacy property. Our methodology results in an efficient outcome for all stakeholders (none are paying more than is required to keep the market active, nor receiving less than they need to participate in the market) and our methodology mitigates pricing uncertainty for stakeholders.

Contrasting cases 2 and 5 elucidates useful properties of the market-activating bid approach. In case 2 we set bids of 500.00 USD per tonne for each recycled product, and the resulting market was dry. In case 5, two of the bids we calculated are lower than the 500.00 USD per tonne equivalent in case 2. This result illustrates the SC dependence of the calculations. The market-activating bids required from each stakeholder are a function of the network structure of the supply chain. This also means that the total combined bid cost required to activate markets is not a single value but is also dependent on SC structure. For instance, in case 5 the bid $\alpha^{d}_{P1}$ is increased from 327.24 USD per tonne up to 1,165.62 USD per tonne, a difference of 838.38 USD per tonne. In contrast, in case 7 the bid $\alpha^{d}_{P3}$ increases from 1,702.59 USD per tonne to 3,792.10 USD per tonne, an increase of 2,089.51 USD per tonne. How we choose to activate markets (i.e., which stakeholder we assign the burden of the separation cost in our example) results in a different bid cost. These differences are a function of SC structure and technology yields. Moreover, these results demonstrate that upstream processes have a different value to different stakeholders, which we are able to capture through market prices and the bidding procedure. Consequently, we observe that we can activate markets using any of the available products, but market prices reflect that it is more expensive to activate the market with certain products.

The reader may observe a consistent trend in the results: the city, as a waste supplier, always has a profit of at least 1.86\e{7} USD. The reader might also observe that the social welfare value is also 1.86\e{7} USD. Indeed, under the interpretation of the market clearing objective function as the total of stakeholder profits, we arrive at an interesting result. Case 2, the scenario in which recycling markets are dry, has all waste sent to the landfill. Under the selected waste supplier bid of -321 USD per tonne, and the landfill tipping fee of -57.12 USD per tonne, and waste transportation cost of 9.18 USD per tonne, the resulting profit available is (321.00 - 57.12 - 9.18) = 254.70 USD per tonne waste. Multiplying this value by 72,847 tonnes we obtain 1.86\e{7} USD, as observed as the social welfare value. This result is interesting because it sets a baseline that the recycling market must be able to exceed. If the social welfare value of the market with active recycling is less than that where all waste is landfilled, then the landfill solution will be preferred according to the social welfare metric. Moreover, our market activating bid methodology allows us to determine bid levels that result in zero profit to stakeholders. This is a problem, since the social welfare of the recycling market must be greater than the social welfare of landfilling if we want to incentivize recycling. To ensure that the recycling market provides at least as much social welfare as the landfill solution, when calculating market-activating bids we assume that the waste supplier $S_{1}$ will not need to pay more than it would in the landfill solution; i.e., the city's contribution to paying for waste transport and separation will not exceed 57.12 + 9.18 = 66.30 USD per tonne. This requirement is reflected in the calculation illustrated in Figure~\ref{fig_DemoCalcs}. The profits made available this way are required in order to ensure that the market attains the necessary social welfare value. If we did not make the profit available to be allocated to the supplier (or another stakeholder) we would instead need to calculate market activating bids with an extra incentive value that makes the necessary profits available to be allocated by the ISO. Thus in every case the social welfare is at least 1.86\e{7} USD; because this value is always available from the landfill solution, the recycling market must be able to exceed it.

\begin{table}.
	\centering
	\caption{Results for coordinated market case studies.}
	\label{CaseStudyResults}
	\resizebox{\textwidth}{!}{%
		\begin{tabular}{|c|ccc|cccccc|} 
			\hline
			&\textbf{\makecell{1. Clear\\Coordinated}}&\textbf{\makecell{2. Dry\\Coordinated}}&\textbf{\makecell{3. Forced}}&\textbf{\makecell{4. Partial Bids\\(Dry)}}&\textbf{5. P1 Covering}&\textbf{6. P2 Covering}&\textbf{7. P3 Covering}&\textbf{8. P4 Covering}&\textbf{9. P5 Covering}\\\hline\hline
			Objective (USD)&&&&&&&&&
			\\\hline
			&7.95\e{7}&1.86E+07&-2.48\e{7}&1.86\e{7}&1.86\e{7}&1.86\e{7}&1.86\e{7}&1.86\e{7}&1.86\e{7}
			\\\hline\hline
			Bids (USD/tonne)&&&&&&&&&
			\\\hline\hline
			$\alpha^{d}_{D1}$&1,500.00&500.00&500.00&327.24&\textbf{1165.62}&327.24&327.24&327.24&327.24
			\\\hline
			$\alpha^{d}_{D2}$&1,500.00&500.00&500.00&77.06&77.06&\textbf{4302.52}&77.06&77.06&77.06
			\\\hline
			$\alpha^{d}_{D3}$&2,000.00&500.00&500.00&1,702.59&1,702.59&1,702.59&\textbf{3,792.10}&1,702.59&1,702.59
			\\\hline
			$\alpha^{d}_{D4}$&1,500.00&500.00&500.00&679.62&679.62&679.62&679.62&\textbf{2,890.20}&679.62
			\\\hline
			$\alpha^{d}_{D5}$&1,500.00&500.00&500.00&43.51&43.51&43.51&43.51&43.51&\textbf{889.35}
			\\\hline
			$\alpha^{d}_{DE}$&0.15&0.10&0.10&0.13&0.13&0.13&0.13&0.13&0.13
			\\\hline\hline
			Profits (USD)&&&&&&&&&
			\\\hline\hline
			City (P0 supply)&1.86\e{7}&1.86\e{7}&2.34\e{7}&1.86\e{7}&1.86\e{7}&1.86\e{7}&1.86\e{7}&1.86\e{7}&1.86\e{7}
			\\\hline
			Grid (PE supply)&2.04\e{7}&&&&0.25&0.25&&&
			\\\hline
			Landfill (P0 demand)&&&&&&&&&
			\\\hline
			Transport&&&&&&&&&
			\\\hline
			Separation (T0)&1.67\e{7}&&&&69.87&55.34&39.44&15.91&
			\\\hline
			Paper recycling (T1)&&&&&&&&&
			\\\hline
			Glass recycling (T2)&&&&&&&&&
			\\\hline
			Metal recycling (T3)&&&&&&&&&
			\\\hline
			Plastic recycling (T4)&&&&&&&&&
			\\\hline
			Composting (T5)&&&&&&&&&
			\\\hline
			City (P1 demand)&&&2.85\e{6}&&&&&&
			\\\hline
			City (P2 demand)&&&-1.73\e{7}&&&&&&
			\\\hline
			City (P3 demand)&&&-7.97\e{6}&&&&&&
			\\\hline
			City (P4 demand)&&&-1.13\e{6}&&&&&&
			\\\hline
			City (P5 demand)&2.38\e{7}&&7.48\e{6}&&&&5.47\e{-5}&5.47\e{-5}&15.91
			\\\hline
			City (PE demand)&&&-3.22\e{7}&1,039.90&1,039.65&1,039.65&1,039.98&1,039.98&1,039.98
			\\\hline\hline
			Demand Allocations&&&&&&&&&
			\\\hline\hline
			Landfill, P0 (tonne)&&72,847&&72,847&&&&&
			\\\hline
			Landfill, P06 (tonne)&19,682&&19,682&&19,682&19,682&19,682&19,682&19,682
			\\\hline
			City, P1 (tonne)&16,521&&16,521&&16,521&16,521&16,521&16,521&16,521
			\\\hline
			City, P2 (tonne)&3,278&&3,278&&3,278&3,278&3,278&3,278&3,278
			\\\hline
			City, P3 (tonne)&6,629&&6,629&&6,629&6,629&6,629&6,629&6,629
			\\\hline
			City, P4 (tonne)&6,266&&6,266&&6,266&6,266&6,266&6,266&6,266
			\\\hline
			City, P5 (tonne)&16,376&&16,376&&16,376&16,376&16,376&16,376&16,376
			\\\hline
			City, PE (GWh)&1,040&&1,040&1,040&1,040&1,040&1,040&1,040&1,040
			\\\hline\hline
			Recycled waste (\%)&&&&&&&&&
			\\\hline
			&71&&71&&71&71&71&71&71
			\\\hline\hline
			City ($N_{1}$) clearing prices&&&&&&&&&
			\\\hline\hline
			P0 (USD/tonne)&-66.30&-66.30&&-66.30&-66.30&-66.30&-66.30&-66.30&-66.30
			\\\hline
			P1 (USD/tonne)&1,500.00&500.00&330.47&327.24&1165.62&327.24&327.24&327.24&327.24
			\\\hline
			P2 (USD/tonne)&1,500.00&500.00&5,777.44&77.06&77.06&4,302.52&77.06&77.06&77.06
			\\\hline
			P3 (USD/tonne)&2,000.00&500.00&1,729.42&1,702.59&1,702.59&1,702.59&3,792.10&1,702.59&1,702.59
			\\\hline
			P4 (USD/tonne)&1,500.00&500.00&685.52&679.62&679.62&679.62&679.62&2,890.20&679.62
			\\\hline
			P5 (USD/tonne)&43.70&500.00&43.60&43.51&43.51&43.51&43.51&43.51&889.34
			\\\hline
			PE (USD/kWh)&0.15&0.13&0.13&0.13&0.13&0.13&0.13&0.13&0.13\\\hline
	\end{tabular}}
\end{table}

\subsection{Drawing Comparisons}

As a final evaluation of our results, we examine the City of Madison, which has similar-sized population to our example problem, and two active landfills \cite{MadisonLandfills}. The city operating budget for 2018 reports about 9.2 million USD in expected solid waste management expenses. We note that our 1,000,000 person model predicts a landfill revenue stream of 4.83 million. Accounting for the population difference suggests a landfill cost of about 12.6 million USD. Our predictions for landfill cost are on a similar order of magnitude.

Comparison brings to mind an important difference between the city's budget and our results: our numbers represent cases in which either all waste is disposed of via landfilling, or where we incentivize the maximum use of recycling to offset landfill use. In our case studies, 71\% annual landfill diversion is achievable (indicating that a large fraction of MSW can be circularized) and the remaining fraction of waste is reckoned to consist of either non-recyclable materials or materials for which a recycling program is not available. Importantly, this means our (scale adjusted) estimate of landfill use cost represents an overestimate of the City of Madison cost. Scaling  by the population, our model suggests a cost 12.6 million USD cost for landfill use. Our overestimate may reflect the fact that the City of Madison maintains an active recycling program of its own and is not using the landfill to dispose of the entirety of the waste produced in the city.

Our case studies suggest that the total value of recycling (as measured in consumer revenue streams) is about 36 million USD over all five recycling services. Adjusting for population, this would suggest the social value of recycling all possible waste produced by a city the size of Madison, WI would be something on the order of 93 million USD. This estimate provides an indicator of the value of the materials that can be recovered from waste, but also suggest the value that these products must have in order to incentivize and maintain the activity of recycling services.

Commodity prices provide a means of evaluating the market-activating bids we determined. The June 1, 2020 spot price for aluminum is 1537.00 USD per tonne \cite{BloombergAluminum}; this price is slightly higher than the consumer bid required in our case studies, but it is not sufficiently high if metals are to be penalized to cover SC MSW separation costs. An estimate of the current HDPE price (as a comparator for plastic) is obtained at 1141 USD per tonne \cite{PlasticPrice}. This estimate is well in excess of the 679.62 USD per tonne market price that we determine, but once again, falls short of the 2,890.20 USD per tonne that we determine with the separation cost burden. These examples suggest that our methodology and 100,000 person case study produced reasonable values. The market data suggest that SC costs (like upstream separation) may need to be distributed over multiple products in order to keep individual product prices near the prices of similar commodities.

\section{Conclusions}

We have derived economic properties for multi-product supply chains by using a coordinated market interpretation. The Lagrangian and dual representations of the market clearing problem unravel fundamental economic properties (competitiveness and revenue adequacy) and reveal proper remuneration mechanisms for stakeholders and pricing behavior. This analysis shows that in a coordinated market, no stakeholder can incur a financial loss. The dual representation also reveals that activating markets by forcing stakeholder participation destroys such a guarantee and introduces arbitrary price behavior. We presented a new product-based representation of a supply chain (we call this a stakeholder graph) that captures interconnectivity of stakeholders via products. This representation reveals market topology information that is not obvious from the traditional node-based (geographical) representation.  The stakeholder graph reveals that it is  unlikely for technology cycles to occur and thus its topology is expected to have a directed acyclic graph structure. This structure allows us to derive a computational procedure to calculate minimum bid costs that activate the market prior to solving the market clearing problem. This market-activating analysis can be used to determine market existence criteria (in the form of lower bounds on prices and bids) and can be used to guide policy that seeks to incentivize markets. Notably, this analysis only relies on topological information of the supply chain. We demonstrate the derived concepts using a case study for municipal waste management. Here, we use our results to determine consumer bids that will activate recycling and that will divert waste from the landfill.  As part of future work, we would like to perform deeper analysis on existence of technology cycles in stakeholder graphs. We are also interested in extending out framework to time-dependent supply chains with inventories and to account for fairness issues. 

\bibliography{EconomicPropertiesBib.bib}


\section*{Acknowledgments}

We acknowledge support from the U.S. Department of Agriculture (grant 2017-67003-26055) and partial funding from the National Science Foundation (under grant CBET-1604374).


\end{document}